\newtheorem{conjecture}{Conjecture}
\newtheorem{theorem}{Theorem}
\newtheorem{lemma}[theorem]{Lemma}
\newtheorem{proposition}[theorem]{Proposition}
\newtheorem{corollary}[theorem]{Corollary}
\newcommand\cut[2]{d\left(#1,#2\right)_{\square}}
\newcommand\CC{{\mathbb C}}
\newcommand\JJ{{\mathbb J}}
\newcommand\NN{{\mathbb N}}
\newcommand\RR{{\mathbb R}}
\newcommand{\dd}{\;\mathrm{d}}
\newcommand\Aut{\mathrm{Aut}}
\newcommand\Real{\mathrm{Re}\;}
\newcommand\Imag{\mathrm{Im}\;}
\newcommand\Trace{\mathrm{Tr}\;}
\newcommand\Cycl{\mathrm{Cycl}\;}
\newcommand\unit{\iota}
\newcommand\wsigma{\widehat{\sigma}}
\begin{document}
\title{Cycles of a given length in tournaments\thanks{The work of the first, second and last authors has received funding from the European Research Council (ERC) under the European Union’s Horizon 2020 research and innovation programme (grant agreement No 648509).  The second and the last were also supported by the MUNI Award in Science and Humanities (MUNI/I/1677/2018) of the Grant Agency of Masaryk University. The work of the third author was supported by NSF Postdoctoral Fellowship Award DMS-1705204. This publication reflects only its authors' view; the European Research Council Executive Agency is not responsible for any use that may be made of the information it contains.}\\ {\large\emph{(Dedicated to the memory of Robin Thomas)}}}
\author{Andrzej Grzesik\thanks{Faculty of Mathematics and Computer Science, Jagiellonian University, {\L}ojasiewicza 6, 30-348 Krak\'{o}w, Poland. E-mail: {\tt Andrzej.Grzesik@uj.edu.pl}.}\and
        Daniel Kr{\'a}l'\thanks{Faculty of Informatics, Masaryk University, Botanick\'a 68A, 602 00 Brno, Czech Republic, and Mathematics Institute, DIMAP and Department of Computer Science, University of Warwick, Coventry CV4 7AL, UK. E-mail: {\tt dkral@fi.muni.cz}.}\and
        L{\'a}szl{\'o} M. Lov{\'a}sz\thanks{Department of Mathematics, Massachusetts Institute of Technology, Cambridge, MA, 02139, USA. E-mail: {\tt lmlovasz@mit.edu}.}\and
        Jan Volec\thanks{Department of Mathematics, Faculty of Nuclear Sciences and Physical Engineering, Czech Technical University in Prague, Trojanova 13, 120 00 Prague, Czech Republic. Previous affiliation: Faculty of Informatics, Masaryk University, Botanick\'a 68A, 602 00 Brno, Czech Republic. E-mail: {\tt jan@ucw.cz}.}}
	
\date{}
\maketitle
\begin{abstract}
We study the asymptotic behavior of the maximum number of directed cycles of a given length in a tournament:
let $c(\ell)$ be the limit of the ratio of the maximum number of cycles of length $\ell$ in an $n$-vertex tournament and
the expected number of cycles of length $\ell$ in the random $n$-vertex tournament,
when $n$ tends to infinity.
It is well-known that $c(3)=1$ and $c(4)=4/3$.
We show that $c(\ell)=1$ if and only if $\ell$ is not divisible by four, which settles a conjecture of Bartley and Day.
If $\ell$ is divisible by four,
we show that
$1+2\cdot\left(2/\pi\right)^{\ell}\le c(\ell)\le 1+\left(2/\pi+o(1)\right)^{\ell}$ and
determine the value $c(\ell)$ exactly for $\ell = 8$.
We also give a full description of the asymptotic structure of tournaments with the maximum number of cycles of length $\ell$
when $\ell$ is not divisible by four or $\ell\in\{4,8\}$.
\end{abstract}

\section{Introduction}
\label{sec:intro}

In this paper, we address one of the most natural extremal problems concerning tournaments:
\emph{What is the maximum number of cycles of a given length that can be contained in an $n$-vertex tournament?}
The cases of cycles of length three and four are well-understood.
An $n$-vertex tournament has at most $\frac{n(n^2-1)}{24}$ cycles of length three (cyclic triangles) if $n$ is odd, and
at most $\frac{n(n^2-4)}{24}$ if $n$ is even; both bounds are the best possible.
This result can be traced back to 1940 to the work of Kendall and Babington Smith~\cite{KenB40} and of Szele~\cite{Sze43}, also see~\cite{Moo15}, and
it is well-known that the number of cycles of length three is determined by the degree sequence of a tournament~\cite{Goo59}.
Beineke and Harary~\cite{BeiH65} and Colombo~\cite{Col64} proved in the 1960s the best possible bounds on the number of cycles of length four:
$\frac{n(n^2-1)(n-3)}{48}$ when $n$ is odd and $\frac{n(n^2-4)(n-3)}{48}$ when $n$ is even.
The asymptotics of the case of cycles of length five was determined only recently by Komarov and Mackey~\cite{KomM17}
who showed that the number of cycles of length five is asymptotically maximized if and only if the tournament is almost regular,
i.e., the behavior in this case is completely analogous to that of cycles of length three;
exact results on cycles of length five for regular tournaments and tournaments of odd order
were obtained by Savchenko in~\cite{Sav16,SavXX}.
In this paper,
we employ algebraic techniques
to provide asymptotically optimal results on the maximum number of cycles for each cycle length not divisible by four, and
determine the limit behavior in the case of cycle lengths divisible by four.

To state our results precisely, we need to fix some notation.
Let $C(n,\ell)$ be the maximum number of cycles of length $\ell$ in an $n$-vertex tournament.
We compare this quantity to the expected number of cycles of length $\ell$ in a random $n$-vertex tournament,
which is $R(n,\ell)=\frac{(\ell-1)!}{2^{\ell}}\binom{n}{\ell}$, and define
\[c(\ell)=\lim_{n\to\infty}\frac{C(n,\ell)}{R(n,\ell)}.\]
In particular, the results that we have mentioned earlier imply that $c(3)=c(5)=1$ and $c(4)=4/3$.
Bartley and Day conjectured the following.
\begin{conjecture}[{Bartley~\cite[Conjecture 104]{Bar18} and Day~\cite[Conjecture 40]{Day17}}]
\label{conj:equiv}
For $\ell\ge 3$, it holds that $c(\ell)=1$ if and only if $\ell$ is not divisible by four.
\end{conjecture}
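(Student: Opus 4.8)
The plan is to pass to the limit theory of tournaments. A convergent sequence of tournaments of growing order gives rise to a \emph{tournamenton}: a measurable $W\colon[0,1]^2\to[0,1]$ with $W(x,y)+W(y,x)=1$ almost everywhere, and the number of directed $\ell$-cycles, suitably normalized, converges to
\[
t(C_\ell,W)\;=\;\int_{[0,1]^\ell}\prod_{i=1}^\ell W(x_i,x_{i+1})\dd x_1\cdots\dd x_\ell ,
\]
indices modulo $\ell$. Since $W\equiv\tfrac12$ gives $t(C_\ell,W)=2^{-\ell}$ and $t(C_\ell,\cdot)$ is continuous on the compact space of tournamentons, $c(\ell)=2^{\ell}\max_W t(C_\ell,W)$; as $c(\ell)\ge 1$ always, Conjecture~\ref{conj:equiv} says precisely that $\max_W t(C_\ell,W)=2^{-\ell}$ iff $4\nmid\ell$. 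The central tool is the Hilbert--Schmidt operator $T_W$ on $L^2[0,1]$ with kernel $W$, for which $t(C_\ell,W)=\Trace\left(T_W^{\ell}\right)$. Writing $W=\tfrac12+U$ with $U$ skew-symmetric, one has $T_W=\tfrac12 P+S$ with $P=\mathbf{1}\otimes\mathbf{1}$ the rank-one projection onto constants and $S:=T_U$ \emph{skew}-self-adjoint, hence normal and diagonalizable; its nonzero spectrum is a union of conjugate pairs $\pm\mathrm{i}\lambda_j$, $\lambda_j>0$, and $\|S\|_{\mathrm{op}}\le\tfrac12$ since $|U|\le\tfrac12$.

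For the ``$4\mid\ell$'' direction I would take $W$ to be the circular tournamenton, $W(x,y)=1$ iff $\{y-x\}\in(0,\tfrac12)$. It is regular, so $S\mathbf{1}=T_U\mathbf{1}=0$ and hence $SP=0$; expanding $\Trace\left((\tfrac12 P+S)^{\ell}\right)$ over words in $\{\tfrac12 P,S\}$, every word with both a $P$ and an $S$ contains a factor $SP$ up to cyclic rotation and so vanishes, leaving $t(C_\ell,W)=2^{-\ell}+\Trace(S^{\ell})$. On the Fourier side $S$ acts on $e^{2\pi\mathrm{i}kx}$ by $\mathrm{i}/(\pi k)$ for odd $k$ and by $0$ otherwise, so $\Trace(S^{\ell})=\sum_j\bigl((\mathrm{i}\lambda_j)^{\ell}+(-\mathrm{i}\lambda_j)^{\ell}\bigr)=2\pi^{-\ell}\sum_{k\ge 1,\ k\text{ odd}}k^{-\ell}>2\pi^{-\ell}$ when $4\mid\ell$. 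Therefore $c(\ell)\ge 1+2(2/\pi)^{\ell}>1$, settling that half of the conjecture (and giving the stated lower bound).

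The substance is showing $\Trace\left(T_W^{\ell}\right)\le 2^{-\ell}$ for \emph{every} tournamenton when $4\nmid\ell$. Expanding $\Trace\left((\tfrac12 P+S)^{\ell}\right)$ and grouping words by the cyclic positions of their $k$ copies of $P$, a careful enumeration gives
\[
\Trace\left(T_W^{\ell}\right)=\Trace(S^{\ell})+\sum_{k=1}^{\ell}\left(\tfrac12\right)^{\!k}\frac{\ell}{k}\!\!\sum_{\substack{a_1,\dots,a_k\ge 0\\ a_1+\cdots+a_k=\ell-k}}\!\!\prod_{j=1}^{k}c_{a_j},\qquad c_a:=\langle\mathbf{1},S^{a}\mathbf{1}\rangle .
\]
Here $c_a=0$ for odd $a$ (as $c_a$ is real and $S^{*}=-S$) and $c_{2m}=(-1)^{m}\|S^{m}\mathbf{1}\|^{2}=(-1)^{m}\!\int t^{m}\dd\mu(t)$, where $\mu$ is the spectral measure of $-S^{2}\ge 0$ at $\mathbf{1}$, a probability measure supported in $[0,\|S\|_{\mathrm{op}}^{2}]$. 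Keeping only the even $a_j=2m_j$ and recognising the inner sums as complete homogeneous symmetric polynomials $h_M$ integrated against $\mu^{\otimes(\ell-2M)}$, this becomes
\[
\Trace\left(T_W^{\ell}\right)=2^{-\ell}+\Trace(S^{\ell})+\sum_{M=1}^{\lfloor(\ell-1)/2\rfloor}(-1)^{M}\,2^{2M-\ell}\,\frac{\ell}{\ell-2M}\!\int h_M(t_1,\dots,t_{\ell-2M})\dd\mu(t_1)\cdots\dd\mu(t_{\ell-2M}).
\]
Now $\Trace(S^{\ell})=0$ for odd $\ell$ and $\Trace(S^{\ell})=-2\sum_j\lambda_j^{\ell}\le 0$ for $\ell\equiv 2\pmod 4$ --- this is the \emph{only} place the hypothesis $4\nmid\ell$ enters, and it is precisely the sign of $\mathrm{i}^\ell$ that makes the quasirandom tournamenton cease to be optimal once $4\mid\ell$. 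So it suffices to bound the $M$-sum: its terms alternate in sign (negative for $M$ odd, positive for $M$ even, since $h_M\ge 0$), and one must show the negative ones dominate, possibly after absorbing $\Trace(S^\ell)$. The decisive extra input is a spectral estimate confining $\mu$: a Cauchy--Schwarz argument on the skew kernel already gives $\|S\|_{\mathrm{op}}\le\sqrt{2}/4$, and the sharp bound (morally $\|S\|_{\mathrm{op}}\le 1/\pi$ --- the same phenomenon behind the companion upper bound $c(\ell)\le 1+(2/\pi+o(1))^{\ell}$) confines $\mu$ to an interval $[0,b^2]$ with $b$ well below $\tfrac12$. After the substitution $t\mapsto 4t$ converting $\mu$ to a probability measure $\nu$ on $[0,4b^2]$, the $M$-sum equals $2^{-\ell}\sum_{M\ge 1}(-1)^{M}\frac{\ell}{\ell-2M}\int h_M\,\dd\nu^{\otimes(\ell-2M)}$, which for a point mass $\nu=\delta_c$ is $2^{-\ell}g_\ell(c)$ with $g_\ell(c)=\sum_{M\ge 1}(-1)^{M}\frac{\ell}{\ell-2M}\binom{\ell-M-1}{M}c^{M}$; so it suffices to prove $g_\ell\le 0$ on $[0,4b^2]$ together with a reduction of the genuine multi-moment statement to this one-variable one (using $\int t^{m+1}\dd\mu\le\|S\|_{\mathrm{op}}^2\int t^{m}\dd\mu$ and the log-convexity of $(\int t^m\dd\mu)_m$). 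For $\ell=5$ this is $g_5(c)=5c(c-1)\le 0$, recovering $c(5)=1$; note that $g_\ell$ does turn positive near $c=1$ for some $\ell$ (for instance $g_6(c)=3c(3c-2)$), so confining $\mu$ strictly below the endpoint is genuinely needed.

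I expect the main obstacle to be exactly this: establishing the spectral confinement of $\mu$ for general (not just rotation-invariant) tournamentons, and then the asymptotic combinatorics of the $M$-sum for large $\ell$ --- defeating the binomials $\binom{\ell-M-1}{M}$ and the factors $\ell/(\ell-2M)$ by the sign alternation and the geometric decay $\|S^{m+1}\mathbf{1}\|\le\|S\|_{\mathrm{op}}\|S^{m}\mathbf{1}\|$, with $\ell$ odd and $\ell\equiv 2\pmod 4$ handled separately. A cleaner alternative worth trying first is a monotonicity statement: that passing from $W$ to its ``regularization'' (zeroing out the degree-irregularity $g=S\mathbf{1}$, i.e. replacing $S$ by $P^{\perp}SP^{\perp}$) cannot decrease $t(C_\ell,\cdot)$, i.e. $\Trace\left(T_W^{\ell}\right)\le\Trace\left((\tfrac12 P+P^{\perp}SP^{\perp})^{\ell}\right)$, whose right-hand side is visibly $\le 2^{-\ell}$ for $4\nmid\ell$ by the sign of $\mathrm{i}^\ell$; such monotonicity fails for unconstrained operators, so the bounds $\|S\|_{\mathrm{op}}<\tfrac12$ and $\|g\|\le\tfrac12$ would have to be used essentially. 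An equality analysis along either route should additionally yield the asymptotic structure of the extremal tournaments: regular for $\ell$ odd, and only $W\equiv\tfrac12$ for $\ell\equiv 2\pmod 4$.
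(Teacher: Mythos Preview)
Your treatment of the $4\mid\ell$ direction via the carousel tournamenton is correct and essentially matches the paper. The gap is in the converse: for $4\nmid\ell$ you only outline a program (the $M$-sum, the reduction to point masses, the one-variable inequality $g_\ell\le 0$, the alternative ``regularization monotonicity''), and you explicitly flag each of these as unproven. None of them is routine; for instance your reduction from a general spectral measure $\mu$ to a Dirac mass is not a consequence of log-convexity alone, and the pointwise inequality $g_\ell(c)\le 0$ on $[0,4b^2]$ is not established for general $\ell$. So as it stands this is a plan, not a proof.

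More to the point, the decomposition $T_W=\tfrac12 P+S$ is the wrong coordinate system here: it forces you to control all the cross-terms between $P$ and $S$, which is exactly the combinatorics you are struggling with. The paper bypasses this entirely by working with the spectrum $\wsigma(W)$ of $T_W$ itself. The key structural facts are that every $\lambda\in\wsigma(W)$ has $\Real\lambda\ge 0$, that $\sum_{\lambda}\Real\lambda\le\tfrac12$, and that there is a largest real element $\rho$ with $|\lambda|\le\rho$ for all $\lambda$ (these are elementary Perron--Frobenius type properties of complementary matrices, passed to the limit). The entire argument for $4\nmid\ell$ then reduces to the scalar inequality
\[
\Real z^{\ell}\;\le\;\ell\,\rho^{\ell-1}\,\Real z\qquad\text{for }|z|\le\rho,\ \Real z\ge 0,
\]
which is a two-line trigonometric check (writing $z=|z|e^{\unit\alpha}$ and using $|\sin\ell\beta|<\ell\sin\beta$ or $-\cos\ell\beta<\ell\sin\beta$ depending on the residue of $\ell$). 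Summing over $\wsigma(W)$ and using $\rho^{\ell}+\ell\rho^{\ell-1}s\le(\rho+s)^{\ell}$ with $s=\sum_{\lambda\ne\rho}\Real\lambda$ gives
\[
\sum_{\lambda}\lambda^{\ell}=\sum_{\lambda}\Real\lambda^{\ell}\le\Bigl(\sum_{\lambda}\Real\lambda\Bigr)^{\!\ell}\le 2^{-\ell},
\]
hence $C(W,\ell)\le 1$. The equality analysis is immediate from the strictness of the scalar inequality: for odd $\ell$ it forces $\Real\lambda=0$ for all $\lambda\ne\rho$ and $\rho=\tfrac12$ (so $W$ is regular), while for $\ell\equiv 2\pmod 4$ it forces $\wsigma(W)=\{\tfrac12\}$ (so $W\equiv\tfrac12$). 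No bound on $\|S\|_{\mathrm{op}}$ sharper than $\rho$ is needed, and in particular the $1/\pi$ bound you invoke (which the paper only proves for \emph{regular} $W$, and uses solely for the $4\mid\ell$ upper bound) plays no role here.
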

We remark that the statement of Conjecture~\ref{conj:equiv} has been proven for \emph{regular} tournaments
by Savchenko~\cite{Sav16}, also see~\cite{Sav17}, and also jointly by Bartley and Day~\cite{Day17,Bar18}, 
and for $\ell\le 8$ by Bartley~\cite[Theorem 109]{Bar18}.

Since it is known that $c(\ell)>1$ for all $\ell$ divisible by four,
the following theorem, which is implied by Theorems~\ref{thm:ck1} and~\ref{thm:ck2}, settles the conjecture.
\begin{theorem}
\label{thm:ck1+2}
Let $\ell\ge 3$. If $\ell$ is not divisble by four, then $c(\ell)=1$.
\end{theorem}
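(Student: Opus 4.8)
The plan is to pass to the language of limit objects for tournaments (tournament limits, i.e., measurable functions $W\colon [0,1]^2\to[0,1]$ with $W(x,y)+W(y,x)=1$) and to reformulate the count of cycles of length $\ell$ as a functional on $W$. Writing $A=W-\tfrac12$ for the ``signed'' deviation from the random tournament, a cycle of length $\ell$ corresponds to the closed-walk integral
\[
t_\ell(W)=\int_{[0,1]^\ell}\prod_{i=1}^{\ell}W(x_i,x_{i+1})\dd x_1\cdots\dd x_\ell,
\]
with indices mod $\ell$. Expanding each factor as $\tfrac12+A(x_i,x_{i+1})$ and multiplying out, the constant term gives exactly $R(n,\ell)$-worth of cycles (after normalization), so proving $c(\ell)=1$ amounts to showing that for $\ell$ not divisible by four every ``correction'' term is $\le 0$, and that the only $W$ achieving equality throughout is $W\equiv\tfrac12$ (which will also be what is needed for the structure statement). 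The key object is the self-adjoint-after-symmetrization integral operator $T_A$ with kernel $A$; since $A$ is antisymmetric, $iT_A$ is self-adjoint (on the complexification), so $T_A$ has purely imaginary spectrum $\{i\lambda_j\}$ with $\lambda_j\in\RR$, and moreover $\Trace T_A^{\,m}=\int A^{\circ m}$ along the cycle equals $\sum_j (i\lambda_j)^m=i^m\sum_j\lambda_j^m$.

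The heart of the argument is then a spectral/trace computation. The full expansion of $t_\ell(W)$ regroups into a sum over ``which edges carry an $A$ and which carry a $\tfrac12$'': a subset $S\subseteq\ZZ_\ell$ of the cyclic edges keeps $A$, the complement contributes factors of $\tfrac12$, and integrating out the vertices not incident to any kept edge collapses the cycle into a union of paths, each path of length $r$ contributing $\int A(x,y)\cdot(\text{stuff})$; crucially, by antisymmetry of $A$, any path of odd length that gets ``closed up'' or integrated against a constant on both ends vanishes, and a careful bookkeeping shows the surviving contributions are governed by $\sum_j \lambda_j^m$ for various $m\le\ell$. One shows $\sum_j\lambda_j^2=\|A\|_2^2\ge 0$ always, while $\sum_j\lambda_j^4\le\left(\sum_j\lambda_j^2\right)^2$ and, more generally, the odd-power traces vanish and the relevant even-power traces are controlled so that when $4\nmid\ell$ the net sign of all corrections is non-positive. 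The cleanest way to see the sign is: the dominant correction for a cycle of length $\ell$ is (up to a positive constant) $\Real\,i^{\ell}\sum_j\lambda_j^{\ell}=\cos(\pi\ell/2)\sum_j\lambda_j^{\ell}$, which is $0$ when $\ell$ is odd and $-\sum_j\lambda_j^{\ell}\le 0$ when $\ell\equiv 2\pmod 4$, and then one checks the lower-order terms cannot overturn this — indeed for $\ell$ odd one shows every term is exactly $0$ unless $W\equiv\tfrac12$, and for $\ell\equiv 2\pmod 4$ one shows the first strictly negative term (the $\lambda^{\ell}$ term, or the $\lambda^{\ell-2}$ term if the former vanishes) dominates any positive lower-order terms by a convexity inequality among power sums of the $\lambda_j$. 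The equality analysis forces all $\lambda_j=0$, hence $A=0$ a.e., giving the structure result.

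I would organize the write-up as: (1) set up tournament limits and reduce $c(\ell)=1$ to an inequality $t_\ell(W)\le t_\ell(1/2)$ for all limits $W$; (2) introduce $A$, the operator $T_A$, its imaginary spectrum, and the trace identities $\Trace T_A^m = i^m\sum_j\lambda_j^m$; (3) expand $t_\ell(W)$ and identify each term with a product of traces of powers of $T_A$ (this is the combinatorial core — matching subsets of cycle-edges to partitions into paths/cycles); (4) for $\ell$ odd, argue every nonconstant term is zero by parity of $A$; for $\ell\equiv 2\pmod 4$, collect terms and prove the total is $\le 0$ via power-sum inequalities; (5) run the equality case to get $W\equiv\tfrac12$. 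The main obstacle I anticipate is step (3)–(4) for $\ell\equiv 2\pmod 4$: controlling the \emph{lower-order} correction terms (those involving $\lambda^m$ with $m<\ell$, arising from subsets $S$ of cycle-edges that are not all of $\ZZ_\ell$) and showing that their aggregate cannot outweigh the leading negative term. This requires either a clever grouping of the expansion that makes the non-positivity termwise (perhaps completing a square in the $\lambda_j$'s), or a spectral bound of the form $\sum_j\lambda_j^{2k}\le(\sum_j\lambda_j^2)^k$ combined with the fact that each $\lambda_j$ is bounded (by $\|A\|_\infty\le\tfrac12$, or by $2/\pi$ after a sharper estimate, which is presumably where the $2/\pi$ in the paper's other bounds originates). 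Getting the constant and the direction of the inequality to line up for all residues mod $4$ simultaneously — rather than case-by-case for small $\ell$ as Bartley did — is the crux.
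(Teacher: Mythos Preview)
Your proposal contains a genuine gap in the odd-$\ell$ case. The claim that ``for $\ell$ odd one shows every term is exactly $0$'' is false. When you expand $\prod_i(\tfrac12+A(x_i,x_{i+1}))$ and integrate, a subset $S$ of cycle-edges contributes a product of path integrals $\langle\mathbf 1,T_A^{r}\mathbf 1\rangle$ over the components of $S$. Odd-length paths do vanish by antisymmetry, but even-length paths give $\langle\mathbf 1,T_A^{2k}\mathbf 1\rangle=(-1)^k\|T_A^{k}\mathbf 1\|^{2}$, which is nonzero whenever $W$ is not regular. Already for $\ell=5$ the subset $|S|=4$ (a single path of length four) contributes a strictly \emph{positive} term $\tfrac{5}{2}\|T_A^{2}\mathbf 1\|^{2}$, which must be beaten by the negative $|S|=2$ term $-\tfrac{5}{8}\|T_A\mathbf 1\|^{2}$. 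That particular inequality happens to follow from $\|T_A\|_{\mathrm{op}}\le\tfrac12$, but you neither state this nor carry out the comparison; for general odd $\ell$ the combinatorics of these path terms is exactly as delicate as the $\ell\equiv 2\pmod 4$ case you flag as the obstacle. So the odd case is not ``easy by parity'' in your framework.

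Relatedly, your equality analysis is wrong for odd $\ell$: you claim equality forces all $\lambda_j=0$ and hence $A\equiv 0$, but in fact $c(\ell)=1$ is attained by \emph{every} regular tournamenton when $\ell$ is odd (e.g.\ the carousel), not only by $W\equiv\tfrac12$. This is consistent with the expansion: when $W$ is regular, $T_A\mathbf 1=0$, all path terms vanish, and the full-cycle term $\Trace T_A^{\ell}$ is zero for odd $\ell$ --- so equality holds with $A\not\equiv 0$.

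The paper avoids the expansion entirely. It works with the spectrum $\wsigma(W)$ of $W$ itself (not of $A$), whose elements lie in the closed right half-plane, have real parts summing to at most $\tfrac12$, and are bounded in modulus by the largest real eigenvalue $\rho$. The single pointwise inequality
\[
\Real z^{\ell}\le \ell\,\rho^{\ell-1}\,\Real z\qquad\text{for }|z|\le\rho,\ \Real z\ge 0,
\]
valid for all $\ell$ not divisible by four, then gives $\sum_\lambda\lambda^{\ell}\le(\sum_\lambda\Real\lambda)^{\ell}\le 2^{-\ell}$ in one stroke, with the equality cases (regular for odd $\ell$, quasirandom for $\ell\equiv 2\pmod 4$) falling out of when the pointwise inequality is tight. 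This bypasses the combinatorial bookkeeping and the competing-sign path terms altogether.
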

If $\ell$ is divisible by four, we establish an asymptotically tight upper bound (Theorem~\ref{thm:ck4}):
\[1+2\cdot\left(2/\pi\right)^{\ell}\le c(\ell)\le 1+\left(2/\pi+o(1)\right)^{\ell}\]
Our asymptotic result on $c(\ell)$ for $\ell$ divisible by four
provides a strong evidence for the following conjecture on the value of $c(\ell)$ for such $\ell$ (we remark that Conjecture~\ref{conj:div4} is stated in~\cite{Day17} by giving an extremal construction,
which we mention at the end of Section~\ref{sec:prelim});
the conjecture is an extension of an earlier problem posed by Savchenko~\cite{Sav16} for regular tournaments.
\begin{conjecture}[{Bartley~\cite[Conjecture 106]{Bar18} and Day~\cite[Conjecture 45]{Day17}}]
\label{conj:div4}
If $\ell$ is divisible by four, then
\[c(\ell)=1+2\cdot\sum_{i=1}^{\infty}\left(\frac{2}{(2i-1)\pi}\right)^{\ell}.\]
\end{conjecture}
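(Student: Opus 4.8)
The plan is to pass to tournament limits and view $c(\ell)$ as the optimum of a spectral functional. Recall that a tournament limit is a measurable $W\colon[0,1]^2\to\{0,1\}$ with $W(x,y)+W(y,x)=1$ almost everywhere, and that $c(\ell)=\sup_W 2^\ell\, t(C_\ell,W)$ where $t(C_\ell,W)=\int_{[0,1]^\ell}\prod_{i=1}^\ell W(x_i,x_{i+1})\dd x$ with indices modulo $\ell$. Writing $W=\tfrac12(1+\sigma)$ with $\sigma$ antisymmetric and $\{-1,1\}$-valued, let $T_\sigma$ be the skew-self-adjoint Hilbert--Schmidt operator on $L^2[0,1]$ with kernel $\sigma$, and let $P$ be the orthogonal projection onto the constants, which is the operator induced by the constant kernel $1$. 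Since $2^\ell\,t(C_\ell,W)=\int_{[0,1]^\ell}\prod_{i=1}^\ell\big(1+\sigma(x_i,x_{i+1})\big)\dd x$ and the multilinear integral of a product of kernels around a cycle is the trace of the corresponding product of operators, one gets
\[2^\ell\, t(C_\ell,W)=\Trace\big((T_\sigma+P)^\ell\big).\]
Hence the conjecture asserts that $\sup_\sigma\Trace\big((T_\sigma+P)^\ell\big)=1+2\sum_{i\ge1}\big(2/((2i-1)\pi)\big)^{\ell}$ whenever $4\mid\ell$.

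For the lower bound I would take the rotational (``carousel'') tournamenton: $\sigma_0(x,y)=1$ if $(y-x)\bmod 1\in(0,1/2)$ and $\sigma_0(x,y)=-1$ otherwise. It is regular (every out-degree equals $1/2$), so $T_{\sigma_0}P=PT_{\sigma_0}=0$, whence $(T_{\sigma_0}+P)^\ell=T_{\sigma_0}^\ell+P$ and $\Trace\big((T_{\sigma_0}+P)^\ell\big)=1+\Trace\big(T_{\sigma_0}^\ell\big)$. Diagonalizing $T_{\sigma_0}$ in the Fourier basis $e_n(x)=e^{2\pi\mathrm{i}nx}$, its eigenvalue on $e_n$ is purely imaginary of modulus $2/(\pi|n|)$ for odd $n$ and is $0$ for even $n$; since $4\mid\ell$, every such eigenvalue $\lambda$ satisfies $\lambda^\ell=|\lambda|^\ell$, so $\Trace\big(T_{\sigma_0}^\ell\big)=\sum_{n\text{ odd}}\big(2/(\pi|n|)\big)^\ell=2\sum_{i\ge1}\big(2/((2i-1)\pi)\big)^\ell$, the claimed value. (For $\ell=4$ this gives $1+\tfrac{32}{\pi^4}\sum_{i\ge1}(2i-1)^{-4}=\tfrac43$, matching the known $c(4)$.)

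The heart of the matter is the matching upper bound, which I would split into two parts. \emph{(A) Reduction to regular tournamentons.} Expanding $(T_\sigma+P)^\ell$, every term except $T_\sigma^\ell$ factors into moments $\beta_m:=\langle T_\sigma^m\mathbf 1,\mathbf 1\rangle$ (one per maximal block of consecutive $T_\sigma$'s delimited by $P$'s), and all $\beta_m$ with $m\ge1$ vanish exactly when $\sigma$ is regular. When $\ell$ is not a multiple of four one has $\Trace(T_\sigma^\ell)\le 0$ and it suffices — and, by Theorem~\ref{thm:ck1+2}, holds — that the remaining terms sum to at most $1$; but when $4\mid\ell$ one has $\Trace(T_\sigma^\ell)\ge 0$ and possibly large, so one must control the $P$-terms and the pure term together, the natural route being to show the supremum is attained at a regular $\sigma$, reducing the target to $\sup\{\Trace(T_\sigma^\ell):\sigma\text{ regular}\}\le 2\sum_{i\ge1}(2/((2i-1)\pi))^\ell$. \emph{(B) Spectral extremality of the carousel.} For regular $\sigma$ the eigenvalues of $T_\sigma$ are $\{\pm\lambda_k\}_{k\ge1}$ with $\lambda_k$ purely imaginary, $\sum_k 2|\lambda_k|^2=\|\sigma\|_{L^2}^2=1$, and $\Trace(T_\sigma^\ell)=2\sum_k|\lambda_k|^\ell=2\sum_k(|\lambda_k|^2)^{\ell/2}$ since $4\mid\ell$. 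By Schur convexity of $x\mapsto\sum_k x_k^{\ell/2}$ on the simplex it suffices to prove that the nonincreasing rearrangement of $(|\lambda_k|^2)_k$ is majorized by that of the carousel spectrum $\big(4/((2i-1)^2\pi^2)\big)_{i\ge1}$ (both summing to $1/2$); the base case $\max_k|\lambda_k|^2\le 4/\pi^2$, i.e.\ $\|T_\sigma\|_{\mathrm{op}}\le 2/\pi$, is essentially the spectral input behind the upper bound of Theorem~\ref{thm:ck4}.

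I expect part (B) to be the principal obstacle. The set of spectra realised by $\{-1,1\}$-valued antisymmetric kernels has no convenient description, and the required majorization does not follow from the visible constraints: skew-self-adjointness together with $\|T_\sigma\|_{\mathrm{HS}}=1$ yield only $\sum_k|\lambda_k|^{2m}\le\tfrac12(2/\pi)^{2m-2}$, which overshoots the conjectured value by the bounded factor $\pi^2/8$. Theorem~\ref{thm:ck4} already pins down the exponential rate $2/\pi$, so what remains is the finer task of forcing the spectral measure towards the carousel's. For $\ell=8$ this can be done by adjoining the sharp estimate $\sum_k|\lambda_k|^4\le\tfrac13$ (which for regular $\sigma$ is immediate from $c(4)=\tfrac43$) and a dedicated argument; a uniform proof, however, would require the whole family of sharp power-sum bounds $2\sum_k|\lambda_k|^{2m}\le 2\sum_{i\ge1}(2/((2i-1)\pi))^{2m}$ — precisely Savchenko's conjecture for regular tournaments, whose right-hand sides are the Taylor coefficients of $\tan$ — and that is where the effort should be concentrated.
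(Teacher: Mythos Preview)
The statement you are addressing is Conjecture~\ref{conj:div4}, which the paper explicitly leaves open: there is no proof in the paper to compare against. The paper establishes only the lower bound (via the carousel, exactly as you do), the asymptotic upper bound $c(\ell)\le 1+(2/\pi+o(1))^\ell$ of Theorem~\ref{thm:ck4}, and the single exact value $c(8)=332/315$ of Theorem~\ref{thm:c8}. Your write-up is not a proof but a strategy, and you are candid about this: you flag part~(B) as the ``principal obstacle'' and observe that carrying it out would require the full family of sharp power-sum inequalities $2\sum_k|\lambda_k|^{2m}\le 2\sum_{i\ge 1}\bigl(2/((2i-1)\pi)\bigr)^{2m}$, which is precisely the conjecture restricted to regular tournamentons. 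So~(B) is not a reduction but a restatement of the open problem; the Schur-convexity step is correct but pushes all the content into a majorization claim you have no mechanism to verify beyond its first entry.

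Part~(A) is also a genuine gap: you say it is ``natural'' that the supremum is attained at a regular $\sigma$ but give no argument, and the paper proves the corresponding inequality $\Trace(J_n+B)^\ell\le\Trace J_n^\ell+\Trace B^\ell$ only for $\ell\in\{4,8\}$ (Lemma~\ref{lm:midterms}), by an explicit expansion and term-by-term sign analysis that does not obviously extend. Two smaller points. First, tournamentons map into $[0,1]$, not $\{0,1\}$, so $\sigma$ is $[-1,1]$-valued and one has only $\|\sigma\|_{L^2}^2\le 1$; your normalisation $\sum_k 2|\lambda_k|^2=1$ in~(B) already presupposes an additional reduction to $\{0,1\}$-valued limits. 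Second, for the cases it does settle, the paper bounds $\Trace B^\ell$ not spectrally but combinatorially, by bounding the cyclic index of every principal $\ell\times\ell$ submatrix (Lemmas~\ref{lm:trace4} and~\ref{lm:trace8}); this is a genuinely different route from your majorization idea, and its extension to general $\ell$ is just as unclear.
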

Our asymptotic result agrees with the conjecture on the dominant term of the sum.
We also show that $c(8)=332/315$ (Theorem~\ref{thm:c8}) and classify the extremal constructions (Theorem~\ref{thm:c48});
note that the value of $c(8)$ is the one given in Conjecture~\ref{conj:div4}.

In addition to the results on the value of $c(\ell)$,
we have also been able to determine the asymptotic structure of extremal tournaments when $\ell$ is not divisible by four.
If $\ell$ is odd,
then tournaments achieving the maximum number of cycles of length $\ell$ are exactly those that are almost regular (Theorem~\ref{thm:ck1}), and
if $\ell$ is even but not divisible by four,
then a tournament achieves the maximum number of cycles of length $\ell$ if and only if it is quasirandom (Theorem~\ref{thm:ck2}).
In particular, maximizing the density of cycles of length $4k+2$ is a \emph{quasirandom-forcing} property,
i.e., a property that a tournament has if and only if it is quasirandom.
We remark that in the induced setting,
in addition to the density of transitive tournaments with four or more vertices,
which is known to be quasirandom-forcing, see~\cite{CorR17} and \cite[Exercise 10.44]{Lov93},
there is only one additional tournament such that its density is quasirandom-forcing~\cite{BucLSS21,CorPS19,HanKKMPSV19},
which is the unique $5$-vertex strongly connected tournament with diameter four.

\section{Preliminaries}
\label{sec:prelim}

In this section, we fix the notation used throughout the paper and present analytic and algebraic tools needed for our arguments.
The set of the first $n$ positive integers is denoted by $[n]$.
A \emph{tournament} is an orientation of a complete graph, and
the \emph{random tournament} is an orientation of a complete graph
where each edge is directed with probability $1/2$ in each of the two possible directions independently of the other edges.
Let $C(T,\ell)$ be the ratio of the number of cycles of length $\ell$ in an $n$-vertex tournament $T$ and
the expected number of cycles of length $\ell$ in the random $n$-vertex tournament.
In particular, the maximum value of $C(T,\ell)$,
where the maximum is taken over all $n$-vertex tournaments,
is $C(n,\ell)/R(n,\ell)$.

The \emph{adjacency matrix} $A$ of a tournament $T$ is the zero-one matrix
with rows and columns indexed by vertices of $T$ such that
$A_{ij}=1$ iff $T$ contains an edge from the $i$-th vertex to the $j$-th vertex.
The \emph{tournament matrix} of an $n$-vertex tournament $T$
is the matrix obtained from the adjacency matrix of $T$ by setting its diagonal entries to be equal to $1/2$ and
then dividing each entry of the matrix by $n$.
We say that a real square matrix $A$ of order $n$
is \emph{skew-symmetric} if $A=-A^T$, i.e., $A_{ij}=-A_{ji}$ for all $i,j\in [n]$, and
$A$ is \emph{complementary} if $A$ is non-negative and $A_{ij}+A_{ji}=1/n$ for all $i,j\in [n]$.
In particular, the tournament matrix of a tournament is complementary.
Finally,
if $A$ is an $n\times n$ matrix, then its \emph{Frobenius norm}, which is denoted by $\|A\|_F$, is
\[\|A\|_F=\sqrt{\sum_{i,j\in [n]}A_{ij}^2}.\]
We recall that $\|Av\|\le\|A\|_F\cdot\|v\|$ for every vector $v\in\RR^n$.

Since the trace of the $\ell$-th power of the adjacency matrix of $T$
is the number of closed walks of length $\ell$, we obtain the following;
note that we state the next proposition for tournament matrices rather than adjacency matrices.

\begin{proposition}
\label{prop:eigen}
Let $A$ be the tournament matrix of an $n$-vertex tournament $T$,
$\lambda_1,\ldots,\lambda_n$ be its eigenvalues, and $\ell\ge 3$ an integer.
It holds that
\[C(T,\ell)=\frac{2^{\ell}}{n^\ell}\cdot\sum_{i=1}^n \lambda_i^{\ell}+O(n^{-1}).\]
\end{proposition}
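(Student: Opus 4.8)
The plan is to move from $C(T,\ell)$ to the trace of a matrix power via the count of closed walks, and then to recognize that count as, up to lower-order terms, the number of directed $\ell$-cycles of $T$. Write $B$ for the zero-one adjacency matrix of $T$, so that the tournament matrix of $T$ equals $\frac1n\left(B+\tfrac12 I\right)$ and its eigenvalues $\lambda_1,\dots,\lambda_n$ (taken with algebraic multiplicity) satisfy
\[\sum_{i=1}^n\lambda_i^\ell=\Trace\left(\left(\tfrac1n\left(B+\tfrac12 I\right)\right)^\ell\right)=\frac{1}{n^\ell}\sum_{k=0}^\ell\binom{\ell}{k}2^{k-\ell}\Trace\left(B^k\right).\]
Hence everything reduces to estimating $\Trace(B^k)$ for $0\le k\le\ell$.

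The heart of the argument is the identity $\Trace(B^\ell)=\ell N+O(n^{\ell-1})$, where $N$ denotes the number of directed $\ell$-cycles of $T$. As recalled just before the proposition, $\Trace(B^\ell)$ equals the number of closed walks of length $\ell$ in $T$. Such a walk that visits $\ell$ distinct vertices is precisely a directed $\ell$-cycle of $T$ together with a marked starting vertex, which contributes the term $\ell N$; a closed walk of length $\ell$ that visits at most $\ell-1$ distinct vertices is determined by a choice of those at most $\ell-1$ vertices together with one of at most $(\ell-1)^\ell$ walk patterns on them, so there are only $O(n^{\ell-1})$ of these (with implied constant depending only on the fixed integer $\ell$). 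For $k\le\ell-1$, bounding $\Trace(B^k)$ by the total number of closed walks of length $k$ gives $\Trace(B^k)=O(n^{\ell-1})$ as well. Substituting these estimates into the expansion above yields $\sum_{i=1}^n\lambda_i^\ell=\frac{\ell N}{n^\ell}+O(n^{-1})$.

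Finally I would express $N$ through $C(T,\ell)$. By definition $N=C(T,\ell)\cdot R(n,\ell)$ with $R(n,\ell)=\frac{(\ell-1)!}{2^\ell}\binom{n}{\ell}=\frac{n^\ell}{2^\ell\ell}\left(1+O(n^{-1})\right)$, and $C(T,\ell)$ is bounded independently of $n$: a tournament has at most $\binom{n}{\ell}(\ell-1)!$ directed $\ell$-cycles, so $C(T,\ell)\le 2^\ell$. Consequently $\frac{\ell N}{n^\ell}=2^{-\ell}C(T,\ell)+O(n^{-1})$, and combining this with the identity from the previous paragraph and rearranging gives the stated formula for $C(T,\ell)$. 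I do not expect any genuine obstacle here; the only points that require care are checking that the degenerate closed walks (and the lower powers of $B$) contribute only $O(n^{\ell-1})$, and verifying that after division by $n^\ell$ all the error terms --- including the relative $O(n^{-1})$ error coming from $\binom{n}{\ell}$ --- collapse into a single $O(n^{-1})$ term whose implied constant depends only on $\ell$.
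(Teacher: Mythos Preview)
Your argument is correct and is precisely the approach the paper indicates in the sentence preceding the proposition: interpret $\Trace B^\ell$ as the number of closed walks of length $\ell$, observe that all but $O(n^{\ell-1})$ of these are genuine directed $\ell$-cycles, and absorb the lower powers of $B$ together with the approximation $\ell!\binom{n}{\ell}=n^\ell(1+O(n^{-1}))$ into the $O(n^{-1})$ error. One caveat: your chain of equalities actually delivers $C(T,\ell)=2^{\ell}\sum_i\lambda_i^{\ell}+O(n^{-1})$, so the extra factor $n^{-\ell}$ in the displayed statement is a typo in the paper (compare Proposition~\ref{prop:cut}) rather than something your ``rearranging'' should be expected to reproduce.
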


We next recall some basic properties of tournament matrices, and more generally complementary matrices, used in~\cite{ChaGKN19};
we remark that similar results were also used earlier by Brauer and Gentry~\cite{BraG68}.

\begin{proposition}
\label{prop:matrix}
Let $A$ be a complementary matrix.
There is a positive real number $\rho$ such that $\rho$ is a real eigenvalue of $A$, and
the absolute value of each eigenvalue of $A$ is at most $\rho$.
In addition,
each eigenvalue of $A$ has non-negative real part, and the sum of the eigenvalues is equal to $1/2$.
\end{proposition}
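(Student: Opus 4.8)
The plan is to first rewrite $A$ in a convenient form. Applying the complementarity condition $A_{ij}+A_{ji}=1/n$ with $i=j$ forces every diagonal entry of $A$ to equal $1/(2n)$, and if we set $S := A - \frac{1}{2n}J$, where $J$ denotes the all-ones matrix of order $n$, then $S_{ij}+S_{ji} = A_{ij}+A_{ji}-1/n = 0$, i.e.\ $S$ is skew-symmetric. Thus $A = \frac{1}{2n}J + S$, and this single identity, together with non-negativity of $A$, drives all four assertions.

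For the existence of $\rho$: since $A$ is entrywise non-negative, the Perron--Frobenius theorem (in the general form, not assuming irreducibility) guarantees that the spectral radius $\rho := \rho(A)$ is itself an eigenvalue of $A$, while $|\lambda|\le\rho$ for every eigenvalue $\lambda$ by the very definition of the spectral radius. To see that $\rho$ is strictly positive, note that $\Trace A = \sum_i A_{ii} = n\cdot\frac{1}{2n} = \frac12 \neq 0$, so $A$ has at least one nonzero eigenvalue and hence $\rho>0$. The same trace computation immediately yields the last assertion as well: the sum of the eigenvalues of $A$ equals $\Trace A = 1/2$.

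It remains to show that every eigenvalue of $A$ has non-negative real part. Let $\lambda$ be an eigenvalue with eigenvector $v\in\CC^n\setminus\{0\}$, normalized so that $\|v\|=1$, and let $v^*$ denote its conjugate transpose. Then $\lambda = v^*Av$, and since $A$ is real we have $\overline{\lambda} = v^*A^Tv$, so $\Real\lambda = \tfrac12\bigl(v^*Av + v^*A^Tv\bigr) = \tfrac12\, v^*(A+A^T)v$. By the decomposition above, $A+A^T = \frac{1}{2n}(J+J^T) + (S+S^T) = \frac1n J$, whence $\Real\lambda = \frac{1}{2n}\, v^*Jv = \frac{1}{2n}\left|\sum_i v_i\right|^2 \ge 0$, as desired.

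I do not expect a genuine obstacle: the proof is a short bookkeeping exercise resting on the identity $A=\frac{1}{2n}J+S$. The only two points deserving a moment's care are invoking the version of the Perron--Frobenius theorem valid for an arbitrary non-negative (possibly reducible) matrix, so that the spectral radius is actually attained as an eigenvalue, and verifying the strict positivity $\rho>0$ via the trace; everything else follows directly from linear algebra.
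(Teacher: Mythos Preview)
Your proof is correct. The paper does not actually prove this proposition: it is stated as a recalled fact, with a citation to earlier work (\cite{ChaGKN19} and Brauer--Gentry~\cite{BraG68}), so there is no in-paper argument to compare against. Your approach---the decomposition $A=\frac{1}{2n}J+S$ with $S$ skew-symmetric, Perron--Frobenius for the existence of a dominant non-negative real eigenvalue, the trace for both $\rho>0$ and the sum of eigenvalues, and the identity $A+A^T=\frac{1}{n}J$ to force $\Real\lambda\ge 0$---is exactly the standard one and is complete as written.
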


\subsection{Tournament limits}
\label{subsec:limit}

We now define tournament limits,
which are analogous to graph limits described in detail in the monograph by Lov\'asz~\cite{Lov12} and
which were used earlier in~\cite{ChaGKN19,Tho18,ZhaZ20}, also see~\cite{DiaJ08,LovS10i} for related concepts.
Most of the results translate readily from the setting of graphs to that of tournaments.
However,
the results on the convergence of spectra seem to be an exception as we point out further.

A \emph{tournamenton} is a measurable function $W:[0,1]^2\to [0,1]$ such that $W(x,y)+W(y,x)=1$ for all $(x,y)\in[0,1]^2$.
The \emph{density} of a tournament $T$ in a tournament $T_0$, which is denoted by $d(T,T_0)$,
is the probability that a uniformly randomly chosen subset of $|T|$ vertices of $T_0$ induces a tournament isomorphic to $T$;
if $|T|>|T_0|$, we set $d(T,T_0)=0$.
The \emph{density} of an $n$-vertex tournament $T$ in a tournamenton $W$ is defined as
\[d(T,W)=\frac{|T|!}{|\Aut(T)|!}\int_{x_1,\ldots,x_n\in [0,1]}\prod_{i\to j}W(x_i,x_j)\dd x_1\cdots x_n,\]
where the product is taken over all $i$ and $j$ such that the $i$-th vertex is joined by an edge to the $j$-th vertex.
Two tournamentons $W$ and $W'$ are \emph{weakly isomorphic} if $d(T,W)=d(T,W')$ for every tournament $T$.

We say that a sequence $(T_n)_{n\in\NN}$ of tournaments is \emph{convergent}
if $|T_n|$ tends to infinity and the sequence $d(T,T_n)$ converges for every tournament $T$.
A tournamenton $W$ is a \emph{limit} of a convergent sequence $(T_n)_{n\in\NN}$ of tournaments
if $d(T,W)$ is equal to the limit of $d(T,T_n)$ for every tournament $T$.
For example,
the tournamenton equal to $1/2$ everywhere
is the limit of the sequence of random $n$-vertex tournaments with probability one.
A tournamenton $W$ is called \emph{regular} if
\[\int_{[0,1]}W(x,y)\dd y=1/2\]
for almost every $x\in [0,1]$;
such tournamentons are limits of tournaments
where the in-degrees and out-degrees of most of the vertices are asymptotically equal to half of the total number of vertices.

An analogous line of arguments as in the graph case yields that
every convergent sequence of tournaments has a limit and
every tournamenton is a limit of a convergent sequence of tournaments.
In particular, a tournamenton $W$ is a limit of $W$-random tournaments that we next define.
An $n$-vertex \emph{$W$-random tournament} is obtained as follows:
sample $n$ points $x_1,\ldots,x_n$ uniformly and independently in $[0,1]$ and
orient the edge between the $i$-th and $j$-th vertex from the $i$-th vertex to the $j$-th vertex with probability $W(x_i,x_j)$.
Note that the expected density of a tournament $T$ in a $W$-random tournament is equal to $d(T,W)$.

We next introduce the quantity $C(W,\ell)$, which is the limit analogue of $C(T,\ell)$ defined earlier:
\[C(W,\ell)=2^{\ell}\int_{x_1,\ldots,x_\ell\in [0,1]}W(x_1,x_2)W(x_2,x_3)\cdots W(x_{\ell-1},x_{\ell})W(x_{\ell},x_1)\dd x_1\cdots x_{\ell}.\]
It follows that $c(\ell)$ is the maximum of $C(W,\ell)$ where the maximum is taken over all tournamentons $W$ (and
it can be shown that the maximum is indeed attained).
Note that the expected value of $C(T,\ell)$ for an $n$-vertex $W$-random tournament $T$, $n\ge\ell$, is equal to $C(W,\ell)$.

If $W$ and $W'$ are two tournamentons, then the \emph{cut distance} between $W$ and $W'$,
which is denoted by $\cut{W}{W'}$, is defined as
\[\cut{W}{W'}=\sup_{X,Y\subseteq [0,1]}\left|\int_{X\times Y}W(x,y)-W'(x,y)\dd x\dd y\right|,\]
where the supremum is taken over all measurable subsets $X$ and $Y$ of $[0,1]$.
As in the graph case, it can be shown that 
\begin{equation}
\left|d(T,W)-d(T,W')\right|\le |T|^2\cut{W}{W'}\label{eq:cut}
\end{equation}
for every tournament $T$ and all tournamentons $W$ and $W'$.

Every complementary matrix $A$ of order $k$ can be associated with a tournamenton $W$ as follows:
the interval $[0,1]$ is split into $k$ disjoint measurable sets $U_1,\ldots,U_k$ each of measure $1/k$ and
$W(x,y)=k\cdot A_{ij}$ if $x\in U_i$ and $y\in U_j$;
tournamentons that can be obtained in this way are called \emph{step tournamentons}.
A \emph{step approximation} of a tournamenton $W$ is a complementary matrix $A$ such that
there exists a partition of the interval $[0,1]$ to $k$ disjoint measurable sets $U_1,\ldots,U_k$ each of measure $1/k$ that
\[A_{ij}=k\int_{U_i\times U_j}W(x,y)\dd x\dd y\]
for every $i,j\in [k]$.
The step tournamenton associated with a step approximation $A$ and the sets $U_1,\ldots,U_k$ used to define $A$ is denoted by $W[A]$.
We say that a sequence of step approximations $(A_n)_{n\in\NN}$ of a tournamenton $W$ is \emph{convergent}
if $\cut{W}{W[A_n]}$ converges to zero.
The Regularity Lemma yields that for every $\varepsilon>0$ and every tournamenton $W$,
there exists a step approximation $A$ of the tournamenton $W$ such that $\cut{W}{W[A]}\le\varepsilon$.
In particular, every tournamenton has a convergent sequence of step approximations and
we obtain using \eqref{eq:cut} the following.
\begin{proposition}
\label{prop:cut}
If $W$ is a tournamenton and $(A_n)_{n\in\NN}$ is a convergent sequence of its step approximations,
then
\[d(T,W)=\lim_{n\to\infty}d(T,W[A_n])\]
for every tournament $T$.
In particular, it holds that
\[C(W,\ell)=2^\ell\lim_{n\to\infty}\Trace A_n^\ell\]
for every $\ell\ge 3$.
\end{proposition}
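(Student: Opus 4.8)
The plan is to deduce the first identity directly from the counting bound~\eqref{eq:cut} and then to write $C(W,\ell)$ as a finite linear combination of induced subtournament densities, so that the second identity follows from the first together with an explicit evaluation of $C$ on step tournamentons. First I would fix a tournament $T$ and apply~\eqref{eq:cut} to $W$ and to the tournamenton $W[A_n]$, obtaining
\[\left|d(T,W)-d(T,W[A_n])\right|\le|T|^2\cut{W}{W[A_n]}.\]
Since $(A_n)_{n\in\NN}$ is a convergent sequence of step approximations of $W$, the right-hand side tends to zero, so $d(T,W[A_n])\to d(T,W)$; this is the first asserted identity.

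Next I would show that there are non-negative reals $\alpha_T$, depending only on $\ell$ and indexed by tournaments $T$ on $\ell$ vertices, such that $C(W',\ell)=\sum_T\alpha_T\,d(T,W')$ for \emph{every} tournamenton $W'$. The starting point is the pointwise identity
\[W'(x_1,x_2)W'(x_2,x_3)\cdots W'(x_\ell,x_1)=\sum_{T}\;\prod_{i\to j\in T}W'(x_i,x_j),\]
valid for all $x_1,\ldots,x_\ell\in[0,1]$ and obtained by multiplying the left-hand side by $\prod_{\{i,j\}}\left(W'(x_i,x_j)+W'(x_j,x_i)\right)=1$, where the product is over the unordered pairs $\{i,j\}$ that are \emph{not} edges of the directed cycle $1\to 2\to\cdots\to\ell\to 1$, and distributing: the resulting terms correspond exactly to the tournaments $T$ on $[\ell]$ containing the edges $1\to 2\to\cdots\to\ell\to 1$. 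Integrating termwise over $[0,1]^\ell$ and recalling that $\int_{[0,1]^\ell}\prod_{i\to j\in T}W'(x_i,x_j)\dd x_1\cdots\dd x_\ell$ is a fixed positive multiple of $d(T,W')$ yields the claimed expression, whose index set is finite. Combining this with the first part,
\[C(W,\ell)=\sum_T\alpha_T\,d(T,W)=\lim_{n\to\infty}\sum_T\alpha_T\,d(T,W[A_n])=\lim_{n\to\infty}C(W[A_n],\ell).\]

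It remains to compute $C(W[A],\ell)$ for the step tournamenton $W[A]$ associated with a complementary matrix $A$ of order $k$ and the sets $U_1,\ldots,U_k$. As $W[A]$ equals $kA_{ij}$ on $U_i\times U_j$ and each $U_i$ has measure $1/k$, partitioning $[0,1]^\ell$ along the product sets $U_{i_1}\times\cdots\times U_{i_\ell}$ gives
\[C(W[A],\ell)=2^\ell\sum_{i_1,\ldots,i_\ell\in[k]}\frac{1}{k^\ell}\prod_{t=1}^\ell kA_{i_t i_{t+1}}=2^\ell\sum_{i_1,\ldots,i_\ell\in[k]}A_{i_1i_2}A_{i_2i_3}\cdots A_{i_\ell i_1}=2^\ell\Trace A^\ell,\]
with the indices read modulo $\ell$. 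Taking $A=A_n$ and letting $n\to\infty$ finishes the proof.

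The only step that requires real care is the middle one, rewriting the cyclic integral $C(W,\ell)$ as a finite combination of induced densities with coefficients independent of $W$; the key point is simply that the relation $W(x,y)+W(y,x)=1$ lets the directed cycle be completed to a full tournament on $[\ell]$, and although this is not deep the combinatorial bookkeeping has to be set up correctly. Everything else is a direct application of~\eqref{eq:cut} and an elementary computation. One could alternatively bypass this step by invoking the homomorphism-density counting lemma $\left|C(W,\ell)-C(W',\ell)\right|\le 2^\ell\ell\cut{W}{W'}$, whose proof is the same as in the graph case, but the route through~\eqref{eq:cut} reuses an inequality already recorded in the paper.
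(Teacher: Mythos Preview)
Your proof is correct and follows the same route as the paper, which simply records Proposition~\ref{prop:cut} as a direct consequence of~\eqref{eq:cut} without spelling out the ``in particular'' step. Your expansion of the cyclic integrand via $\prod_{\{i,j\}}\bigl(W'(x_i,x_j)+W'(x_j,x_i)\bigr)=1$ to write $C(W',\ell)$ as a finite linear combination of induced densities, together with the direct evaluation $C(W[A],\ell)=2^\ell\Trace A^\ell$, makes explicit precisely what the paper leaves implicit; the alternative you mention at the end (a homomorphism-density counting lemma applied directly to the cycle) would be the other natural way to fill this in, and is closer in spirit to how such steps are usually handled in the graphon literature.
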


We finish this subsection by describing a tournamenton that
is believed to be extremal for Conjecture~\ref{conj:div4} for every $\ell$ divisible by four.
For $x,y\in[0,1]$,
define $W_C(x,x)=1/2$, $W_C(x,y)=1$ if $y\in (x-1,x-1/2)\cup (x,x+1/2]$, and $W_C(x,y)=0$ otherwise.
The tournamenton $W_C$, which we refer to as the carousel tournamenton, is depicted in Figure~\ref{fig:carouselle}, and
it is the limit of the following tournaments described in~\cite{Day17} in relation to Conjecture~\ref{conj:div4}:
take vertices $0,\ldots,2n$ and join a vertex $i$ to the vertices $i+1,\ldots,i+n$ (computations modulo $2n+1$).
These tournaments are called \emph{carousel tournaments}.
The value of $C(W_C,\ell)$ for every positive integer $\ell$ divisible by four
is equal to the value of $c(\ell)$ given in Conjecture~\ref{conj:div4}.

\begin{figure}
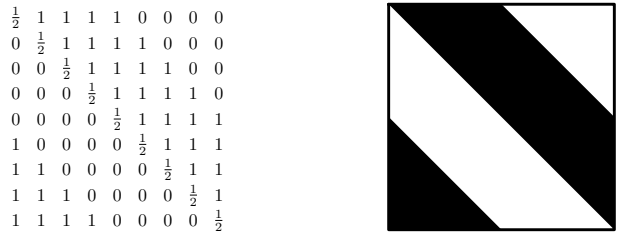

\begin{center}
\epsfysize 3cm
\epsfbox{tourn-ck-7.mps}
\hskip 2cm
\epsfbox{tourn-ck-1.mps}
\end{center}
\caption{The tournament matrix of the $9$-vertex carousel tournament and the carousel tournamenton.
         The origin of the coordinate system is in the top left corner, the $x$-axis is vertical and the $y$-axis is horizontal.
	 The black color represents the value $1$ and the white color the value $0$.}
\label{fig:carouselle}
\end{figure}

\subsection{Spectral properties of tournaments and their limits}
\label{subsec:spectrum}

A tournamenton $W$ can be viewed as a linear operator from $L_2[0,1]$ to $L_2[0,1]$
defined as
\[(Wf)(x)=\int_{[0,1]}W(x,y)f(y)\dd y.\]
Since this operator is compact (as all Hilbert-Schmidt integral operators are),
its spectrum $\sigma(W)$ is either finite or countably infinite,
the only accumulation point of $\sigma(W)$ can be zero, and
every non-zero element of $\sigma(W)$ is an eigenvalue of $W$.
Moreover, for every non-zero $\lambda\in\sigma(W)$, there exists $k_{\lambda}\in\NN$ such that
the kernels of $(W-\lambda)^{k_{\lambda}}$ and $(W-\lambda)^{k_{\lambda}+1}$ are the same and
their dimension is finite.
For example, the spectrum of the carousel tournamenton $W_C$ defined at the end of Subsection~\ref{subsec:limit}
consists of $1/2$, $\pm\unit/((2k-1)\pi)$ for $k\in\NN$, and $0$.

It is plausible that
if $(T_n)_{n\in\NN}$ is a convergent sequence of tournaments and $W$ is a limit tournamenton,
then the normalized spectra of the tournament matrices of $T_n$ converge to the spectrum of $W$
in the sense used in the graph setting in~\cite[Section 6]{BorCLSV12}, also see~\cite[Chapter 11]{Lov12}.
However, the equality between the density of cycles of length $\ell$ and
the trace of $W^\ell$ for $\ell\ge 3$ (note that $W^\ell$ is a trace-class operator for $\ell\ge 2$),
which forms the core of the argument in~\cite[Section 6]{BorCLSV12} and is straightforward in the case of graphons,
is not obvious in the case of tournamentons.
While we establish a close analogy of this equality as \eqref{eq:limitpower} in Proposition~\ref{prop:limitset},
it does not seem to be strong enough to give results completely analogous to those on the convergence of spectra of graph limits.

To proceed with our exposition, we need to define a notion of convergence for multisets of complex numbers.
If $z$ is a complex number,
we write $N_{\varepsilon}(z)$ for the set of all complex numbers $z'$ with $|z-z'|\le\varepsilon$.
We say that a sequence $(X_i)_{i\in\NN}$ of multisets of complex numbers \emph{converges as multisets}
to a multiset $X$ if the following holds:
\begin{itemize}
\item if $z$ is an element of $X$ with a finite multiplicity,
      then there exists $\varepsilon_0>0$ such that for every $\varepsilon\in(0,\varepsilon_0)$,
      there exists $i_0$ such that
      both $|X_i\cap N_{\varepsilon}(z)|$ and $|X\cap N_{\varepsilon}(z)|$ are finite and equal for every $i\ge i_0$, and
\item if $z$ is an element of $X$ with infinite multiplicity,
      then for every $\varepsilon>0$ and every $k\in\NN$,
      there exists $i_0$ such that
      $|X_i\cap N_{\varepsilon}(z)|\ge k$ for every $i\ge i_0$,
\end{itemize}
We say that a sequence $(A_n)_{n\in\NN}$ of step approximations of a tournamenton $W$ is \emph{strongly convergent}
if it is convergent and the spectra of $A_n$ converge as multisets.
Later, we will show that every convergent sequence of step approximations is also strongly convergent
but we treat the two notions as distinct until we establish their equivalence.
We summarize properties of the limit multiset of a strongly convergent sequence of step approximations in the next proposition.

\begin{proposition}
\label{prop:limitset}
Let $W$ be a tournamenton and
let $(A_n)_{n\in\NN}$ be a strongly convergent sequence of step approximations of $W$.
The limit multiset $X$ of the spectra of $A_n$ satisfies the following.
\begin{itemize}
\item The set $X$ contains zero and its multiplicity is infinite.
\item Every non-zero element of $X$ has finite multiplicity.
\item Every element of $X$ has a non-negative real part.
\item The real parts of the elements of $X$ sum to at most $1/2$ (taking their multiplicities into account).
\item If $x$ is an element of $X$ that is not real, then $X$ contains the complex conjugate of $x$ and
      the multiplicities of $x$ and its complex conjugate are the same.
\item If $X$ contains a non-zero element,
      then it contains a positive real $\rho$ such that the absolute value of all elements of $X$ is at most $\rho$.
\item The sum of the $\ell$-th powers of the elements of $X$ is absolutely convergent for every $\ell\ge 2$.
\item It holds that
      \begin{equation}
      C(W,\ell)=2^{\ell}\cdot\sum_{x\in X}x^\ell\label{eq:limitpower}
      \end{equation}
      for every $\ell\ge 3$.
\end{itemize}
\end{proposition}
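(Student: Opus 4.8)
The plan is to drive everything from two uniform estimates on the eigenvalues of the complementary matrices $A_n$ together with the definition of convergence as multisets. First I would record that, since every step approximation is a complementary matrix, Proposition~\ref{prop:matrix} applies to each $A_n$: its spectrum contains a positive real number $\rho_n$ equal to the spectral radius, every eigenvalue has non-negative real part, and the eigenvalues sum to $1/2$; in particular $\rho_n=\Real\rho_n\le\sum_i\Real\lambda_i(A_n)=1/2$, so every eigenvalue of every $A_n$ lies in the closed disk of radius $1/2$. Second, a direct computation with the entries of a complementary matrix of order $k$ (each entry lies in $[0,1/k]$ and the entries sum to $k/2$) gives $\|A_n\|_F^2\le 1/2$, whence Schur's inequality yields $\sum_i|\lambda_i(A_n)|^2\le 1/2$ for every $n$. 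These two facts are the whole engine of the proof; in particular the second one implies that, for each $\varepsilon>0$, every $A_n$ has at most $1/(2\varepsilon^2)$ eigenvalues of modulus at least $\varepsilon$.

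Next I would harvest the ``local'' bullets. A ball-shrinking argument applied to the definition of convergence as multisets transfers the bound of the previous sentence to $X$, so $X$ has only finitely many elements of modulus $\ge\varepsilon$ for each $\varepsilon>0$; this is exactly the statement that every non-zero element of $X$ has finite multiplicity and that $0$ is the only possible accumulation point of $X$. The same matching shows that every non-zero $x\in X$ is a limit of eigenvalues of the $A_n$, each of which has non-negative real part and modulus $\le 1/2$, so $\Real x\ge 0$ and $|x|\le 1/2$; since the characteristic polynomials of the $A_n$ are real, their spectra are conjugation-invariant, and this passes to $X$. Reading the spectrum of $A_n$ as that of the associated operator $W[A_n]$ — a finite-rank operator, hence with infinite-dimensional kernel — makes the bullet that $0\in X$ with infinite multiplicity immediate. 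Matching finite sub-multisets of $X$ with eigenvalues of the $A_n$ and using $\sum_i|\lambda_i(A_n)|^2\le 1/2$ gives $\sum_{x\in X}|x|^2\le 1/2$, and then $\sum_{x\in X}|x|^\ell\le\sum_{x\in X}|x|^2\le 1/2$ for every $\ell\ge 2$ since $|x|\le 1$, which is the absolute-convergence bullet.

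For the remaining three bullets I would start from Proposition~\ref{prop:cut}, which gives $C(W,\ell)=2^\ell\lim_n\sum_i\lambda_i(A_n)^\ell$. Fix a threshold $\delta\in(0,1/2)$ avoiding the countably many moduli of eigenvalues of the $A_n$ and of elements of $X$, and split each power sum into the part over eigenvalues of modulus $<\delta$ and the part over those of modulus $\ge\delta$. For $\ell\ge 3$ the light part is bounded in absolute value by $\delta^{\ell-2}\sum_i|\lambda_i(A_n)|^2\le\delta^{\ell-2}/2$ uniformly in $n$, with the analogous bound for the light part of $\sum_{x\in X}x^\ell$, and both tend to $0$ as $\delta\to 0$. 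The heavy part consists of boundedly many eigenvalues, and the crucial claim is that, for fixed $\delta$, this finite multiset converges to $\{x\in X:|x|\ge\delta\}$, so that $\sum_{|\lambda|\ge\delta}\lambda^\ell\to\sum_{x\in X,|x|\ge\delta}x^\ell$; letting $n\to\infty$ and then $\delta\to 0$ gives \eqref{eq:limitpower}. The same heavy/light split, now using $\sum_i\Real\lambda_i(A_n)=1/2$ together with $\Real\lambda_i(A_n)\ge 0$, yields $\sum_{x\in X,|x|\ge\delta}\Real x\le 1/2$ for every $\delta$, hence the bullet on real parts; and for the dominant-$\rho$ bullet one takes a subsequential limit $\rho$ of the $\rho_n$, which is a positive real number lying in $X$ by the heavy-multiset convergence and which dominates $X$ because each element of $X$ is a limit of eigenvalues of modulus $\le\rho_n$.

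The main obstacle is precisely the ``no escape of mass'' step underpinning the heavy-multiset convergence used in the previous paragraph: one must rule out that an eigenvalue of $A_n$ of modulus bounded away from $0$ drifts to a point not in $X$. The uniform bound $\sum_i|\lambda_i(A_n)|^2\le 1/2$ is what makes this work, since it caps, uniformly in $n$, the number of eigenvalues of modulus $\ge\varepsilon$, so the set of all subsequential limits of such eigenvalues is itself finite away from $0$; a diagonalisation argument localises these finitely many limit points, and comparing their multiplicities with the defining property of the limit multiset $X$ identifies them with the elements of $X$ of modulus $\ge\varepsilon$. Once this localisation is in place, everything else is bookkeeping with the two uniform estimates.
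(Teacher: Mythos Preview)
Your approach is correct, and the main difference from the paper lies in the source of the uniform $\ell^2$ bound on the spectra. You obtain $\sum_i|\lambda_i(A_n)|^2\le\|A_n\|_F^2\le 1/2$ via Schur's inequality and the elementary entry bound; the paper instead splits each spectrum into $X^+=\{x:\Real x\ge|x|/2\}$ and $X^-=\{x:\Real x\le|x|/2\}$, bounds $\sum_{X^+}|x|\le 2\sum_{X^+}\Real x\le 1$ from the trace condition, and bounds $\sum_{X^-}|x|^2\le 1/2$ by combining $\Trace A_n^2\ge 0$ (all entries non-negative) with $\sum_{\Real x^2>0}\Real x^2\le(\sum\Real x)^2=1/4$ and the observation that $\Real x^2\le -|x|^2/2$ on $X^-$. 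Your route is shorter and delivers the needed bound in one line; the paper's is entirely self-contained and avoids invoking a named matrix inequality. Once either bound is in hand, the heavy/light truncation leading to \eqref{eq:limitpower}, the $\rho_n\to\rho$ argument for the dominant real element, conjugate symmetry, and the real-part sum are handled the same way in both proofs. Two minor remarks: for the bullet that $0\in X$ with infinite multiplicity, the paper argues via $|X_n|=k_n\to\infty$ together with the finite multiplicity of every non-zero limit point, rather than by reinterpreting $X_n$ as an operator spectrum; and your ``no escape of mass'' paragraph is essentially making explicit what the paper's definition of multiset convergence must entail for these arguments to go through, a point the paper leaves implicit.
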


\begin{proof}
Let $X_n$ be the spectrum of $A_n$.
By Proposition~\ref{prop:matrix},
all the elements of $X_n$ have non-negative real parts and the sum of their real parts is $1/2$.
Hence, all the elements of $X$ have non-negative real parts and their real parts sum to at most $1/2$.
Since the matrix $A_n$ is real,
every non-real eigenvalue of $A_n$ comes in a pair with a complex conjugate eigenvalue of the same multiplicity.
Hence, the pairs of complex conjugate non-real elements of $X$ must have the same multiplicity.
Let $\rho_n$ be the largest real eigenvalue of $A_n$.
Note that the absolute value of all the elements of $X_n$ is at most $\rho_n$ by Proposition~\ref{prop:matrix}.
The sequence $\rho_n$ converges (because the sets $X_n$ converge as multisets) and its limit $\rho$ belongs to $X$.
If $\rho=0$, then $X$ has no non-zero elements.
If $\rho>0$, then the absolute value of all elements of $X$ is at most $\rho$.

We next show that the sum of the $\ell$-th powers of the elements of $X$ is absolutely convergent for every $\ell\ge 2$.
Let $X^+$ and $X^-$ be the multiset of the non-zero elements $x$ of $X$ such that $\Real x\ge|x|/2$ and $\Real x\le|x|/2$, respectively.
Note that $\Real x^2 \le - |x^2|/2 < 0$ for every $x\in X^-$.
First observe that
\begin{equation}
\sum_{x\in X^+}|x|\le 2\sum_{x\in X^+}\Real x\le 1.\label{eq:X+}
\end{equation}
Since $\Real x^2\le (\Real x)^2$ for every $x\in X_n$ and
the sum of real parts of the elements of $X_n$ is $1/2$,
it follows that
\[\sum_{x\in X_n,\Real x^2>0}\Real x^2\le\frac{1}{4}.\]
Since the trace of $A_n^2$ is non-negative,
we obtain that
\[-\sum_{x\in X_n,\Real x^2<0}\Real x^2\le\frac{1}{4}.\]
In particular, it holds that
\[
-\sum_{x\in X^-}\Real x^2\le\frac{1}{4}.
\]
As $-\Real x^2\ge |x^2|/2$ for every $x\in X^-$, we obtain that
\begin{equation}
\sum_{x\in X^-}|x|^2\le -2\sum_{x\in X^-}\Real x^2\le\frac{1}{2}.\label{eq:X-}
\end{equation}
The inequalities \eqref{eq:X+} and \eqref{eq:X-} yield that
all elements of the multiset $X$ except for $0$ have finite multiplicity and
the sum of the $\ell$-th powers of the elements of $X$ is absolutely convergent for every $\ell\ge 2$.
Since the sizes of the multisets $X_n$ tend to infinity and $|x|\le 1/2$ for all their elements,
the multiset $X$ must contain an element with infinite multiplicity;
since such an element can be only $0$, the multiset $X$ contains $0$ with infinite multiplicity.

It remains to establish \eqref{eq:limitpower}. Fix $\varepsilon\in (0,1)$.
Similarly to the previous paragraph,
define $X_n^+$ to be the multiset of the elements $x$ of $X_n$ such that $\Real x\ge|x|/2$ and
$X_n^-$ to be the multiset of the elements $x$ of $X_n$ such that $\Real x\le|x|/2$.
Along the lines leading to \eqref{eq:X+} and \eqref{eq:X-}, we obtain that
\[\sum_{x\in X_n^+}|x|^2\le\frac{1}{2}\sum_{x\in X_n^+}|x|\le\frac{1}{2}\qquad\mbox{and}\qquad\sum_{x\in X_n^-}|x|^2\le\frac{1}{2}.\]
Hence, we obtain for $\ell \geq 3$ that
\begin{equation}
\left|\Trace A_n^\ell-\sum_{x\in X_n,|x|>\varepsilon}x^\ell\right|
  \le\sum_{x\in X_n,|x|\le\varepsilon}|x|^\ell
  \le\varepsilon\sum_{x\in X_n,|x|\le\varepsilon}|x|^2\le\varepsilon.\label{eq:Xneps}
\end{equation}
Similarly, we obtain that
\begin{equation}
\left|\sum_{x\in X,|x|\le\varepsilon}x^\ell\right|
  \le\sum_{x\in X,|x|\le\varepsilon}|x|^\ell
  \le\varepsilon\sum_{x\in X,|x|\le\varepsilon}|x|^2\le\varepsilon.\label{eq:Xeps}
\end{equation}
Consequently, it follows from \eqref{eq:Xneps} and \eqref{eq:Xeps} that
\begin{equation}
\lim_{n\to\infty}\left|\Trace A_n^\ell-\sum_{x\in X_n}x^\ell\right|\le 2\varepsilon.\label{eq:Xnlim}
\end{equation}
Since the estimate \eqref{eq:Xnlim} holds for every $\varepsilon\in (0,1)$,
It follows that
\[\lim_{n\to\infty}\Trace A_n^\ell=\sum_{x\in X_n}x^\ell.\]
The identity \eqref{eq:limitpower} now follows from Proposition~\ref{prop:cut}.
\end{proof}

By compactness, every convergent sequence of step approximations of $W$ has a strongly convergent subsequence.
The limit multisets of two such strongly convergent subsequences must be the same
since no two different multisets can be absolutely convergent and satisfy \eqref{eq:limitpower} for every $\ell\ge 3$.
Hence, every convergent sequence of step approximations of $W$ is also strongly convergent, 
which we state as a corollary.

\begin{corollary}
\label{cor:strongconv}
Every convergent sequence $(A_n)_{n\in\NN}$ of step approximations of a tournamenton $W$ is strongly convergent.
\end{corollary}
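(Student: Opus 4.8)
The plan is to run the standard subsequence argument in three steps. Since $(A_n)_{n\in\NN}$ is convergent by hypothesis, by the definition of strong convergence it suffices to prove that the spectra $X_n$ of the matrices $A_n$ converge as multisets. First I would extract, by compactness and a diagonalization, a subsequence along which the spectra do converge as multisets to some multiset $X$; then I would show that such an $X$ cannot depend on the subsequence; and finally I would invoke the subsequence principle to conclude that $(X_n)_{n\in\NN}$ itself converges to $X$. Two uniform estimates on the $X_n$ drive the whole argument, and both already appear inside the proof of Proposition~\ref{prop:limitset}: by Proposition~\ref{prop:matrix}, the largest real eigenvalue $\rho_n$ of $A_n$ is a non-negative real eigenvalue that dominates every eigenvalue in absolute value and is at most the sum $1/2$ of the (non-negative) real parts of all eigenvalues of $A_n$, so all elements of $X_n$ lie in the disk of radius $1/2$; and, exactly as in the derivation of \eqref{eq:X+} and \eqref{eq:X-} applied to $X_n$, one has $\sum_{x\in X_n,\,|x|\ge\varepsilon}|x|^2\le 1$, so for every $\varepsilon>0$ at most $1/\varepsilon^2$ elements of $X_n$ (with multiplicity) have modulus at least $\varepsilon$.

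For the compactness step, given any subsequence of $(A_n)_{n\in\NN}$ and any $m\in\NN$, the truncation $X_n\cap\{z:|z|\ge 1/m\}$ is a multiset of size at most $m^2$ lying in a fixed compact annulus. Passing first to a sub-subsequence along which this integer size is constant, then, by Bolzano--Weierstrass applied to the tuple of its points, to one along which those points converge, and finally diagonalizing over $m$ (perturbing the thresholds $1/m$ to avoid the at most countably many on which some element of the limit sits), one obtains a single sub-subsequence all of whose truncations converge as finite multisets. Adjoining $0$ with infinite multiplicity yields a multiset $X$ to which the spectra along this sub-subsequence converge as multisets, so the sub-subsequence is strongly convergent; Proposition~\ref{prop:limitset} then guarantees that the power sums $p_\ell:=\sum_{x\in X}x^\ell$ are absolutely convergent and that $C(W,\ell)=2^\ell p_\ell$ for every $\ell\ge 3$.

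The step I expect to be the main obstacle is the uniqueness of $X$. If $X$ and $X'$ both arise from sub-subsequences as above, then both contain $0$ with infinite multiplicity, all of their other elements have finite multiplicity and modulus at most $1/2$, and $\sum_{x\in X}x^\ell=C(W,\ell)/2^\ell=\sum_{x\in X'}x^\ell$ for all $\ell\ge 3$; so it suffices to show that such a multiset is determined by the numbers $p_\ell$ with $\ell\ge 3$. For this I would consider
\[h(z)=\sum_{\ell\ge 3}p_\ell z^{\ell-3}=\sum_{x}\frac{x^3}{1-xz},\]
where $x$ ranges over the nonzero elements of the multiset with multiplicity; the two expressions agree for $|z|<2$ because $|x|\le 1/2$ and $\sum_x|x|^3<\infty$. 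Since only finitely many elements have modulus above any fixed positive number, the closed form exhibits $h$ as a meromorphic function on $\CC$ whose poles are precisely the reciprocals $1/x$, the residue at $1/x$ being $-m_x x^2\neq 0$, where $m_x$ is the multiplicity of $x$. The germ of $h$ at the origin is determined by $\{p_\ell\}_{\ell\ge 3}$, and from $h$ one recovers each nonzero $x$ as the reciprocal of a pole and then $m_x$ as $-x^2$ times the residue of $h$ there; hence $X=X'$.

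Finally, since every subsequence of $(A_n)_{n\in\NN}$ is again a convergent sequence of step approximations of $W$, the previous two steps show that every subsequence of $(X_n)_{n\in\NN}$ has a sub-subsequence converging as multisets to one and the same multiset $X$. Convergence as multisets is metrizable on the class of multisets supported in the disk of radius $1/2$ that contain at most $1/\varepsilon^2$ elements of modulus at least $\varepsilon$ for every $\varepsilon>0$ and contain $0$ with infinite multiplicity, so this forces $X_n\to X$ as multisets; consequently $(A_n)_{n\in\NN}$ is strongly convergent.
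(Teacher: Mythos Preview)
Your proof is correct and follows essentially the same three-step route as the paper---extract a strongly convergent subsequence by compactness, argue that any two such limit multisets must coincide because they are determined by the power sums $C(W,\ell)/2^\ell$ for $\ell\ge 3$, and then invoke the subsequence principle---only you supply considerably more detail, most notably the meromorphic-function argument recovering the multiset from the poles and residues of $h(z)=\sum_{\ell\ge 3}p_\ell z^{\ell-3}$, which the paper merely asserts. One small point: your metrizability remark is slightly awkward since the finite spectra $X_n$ do not themselves lie in the class you describe, but the subsequence principle for the paper's notion of multiset convergence to a fixed $X$ goes through directly without it.
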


Corollary~\ref{cor:strongconv} implies that
the limit multiset of every convergent sequence of step approximations of a tournamenton $W$ is the same;
we will write $\wsigma(W)$ for this limit multiset after removing $0$.
Proposition~\ref{prop:limitset} now yields that
\begin{equation}
C(W,\ell)=2^{\ell}\cdot\sum_{\lambda\in\wsigma(W)}\lambda^\ell\label{eq:lambda}
\end{equation}
for every $\ell\ge 3$.

We conclude with two propositions relating structural properties of a tournamenton $W$ and $\wsigma(W)$.

\begin{proposition}
\label{prop:reg}
A tournamenton $W$ is regular if and only if $1/2\in\wsigma(W)$.
\end{proposition}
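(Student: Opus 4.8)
The plan is to prove both directions by comparing $W$ to a convergent sequence $(A_n)_{n\in\NN}$ of its step approximations, which exists by the Regularity Lemma and is strongly convergent by Corollary~\ref{cor:strongconv}; I write $k_n$ for the order of $A_n$, $U_1,\dots,U_{k_n}$ for the classes defining it, and $X$ for the common limit multiset of the spectra $X_n:=\sigma(A_n)$, so that $\wsigma(W)=X\setminus\{0\}$.

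For the implication that regularity of $W$ implies $1/2\in\wsigma(W)$, I would first check that every step approximation of a regular $W$ has all row sums equal to $1/2$: the $i$-th row sum of $A_n$ equals $k_n\int_{U_i\times[0,1]}W(x,y)\dd x\dd y=k_n\int_{U_i}\tfrac12\dd x=\tfrac12$. Hence $1/2$ is an eigenvalue of $A_n$, so the spectral radius of $A_n$ is at least $1/2$; on the other hand, Proposition~\ref{prop:matrix} says this spectral radius is a positive real eigenvalue, and since all eigenvalues have non-negative real part summing to $1/2$, it is at most $1/2$. Thus every $A_n$ has spectral radius exactly $1/2$, which forces $1/2\in X$, and since $1/2\neq 0$ we conclude $1/2\in\wsigma(W)$.

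For the converse, the task is to convert the hypothesis $1/2\in\wsigma(W)$ into $L^2$-control of the degree functions $r_n(x):=\int_{[0,1]}W[A_n](x,y)\dd y$, which is the step function equal on $U_i$ to the $i$-th row sum of $A_n$. Since $1/2\in X$ has finite multiplicity while every $A_n$ has spectral radius $\rho_n\le 1/2$ (by the computation above), the convergence $X_n\to X$ forces $\rho_n\to 1/2$. Writing $A_n=\tfrac1{2k_n}J+S_n$ with $S_n$ skew-symmetric and taking a non-negative unit eigenvector $v_n$ of $A_n$ for $\rho_n$ (which exists by Perron--Frobenius), the vanishing $v_n^\top S_nv_n=0$ gives $\rho_n=\langle v_n,A_nv_n\rangle=\tfrac1{2k_n}\langle v_n,\mathbf 1\rangle^2$; hence the cosine of the angle between $v_n$ and the all-ones vector $\mathbf 1$ equals $\sqrt{2\rho_n}\to 1$, so $\bigl\|v_n-\tfrac1{\sqrt{k_n}}\mathbf 1\bigr\|\to 0$. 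Using that $\|A_n\|_F\le\tfrac1{\sqrt2}$ (since $(A_n)_{ij}\le 1/k_n$ yields $\sum(A_n)_{ij}^2\le\tfrac1{k_n}\sum(A_n)_{ij}=\tfrac12$), a triangle inequality applied to $A_nv_n=\rho_nv_n$ then gives $\bigl\|A_n\tfrac1{\sqrt{k_n}}\mathbf 1-\tfrac12\cdot\tfrac1{\sqrt{k_n}}\mathbf 1\bigr\|\to 0$, i.e. $\|r_n-\tfrac12\|_{L^2[0,1]}\to 0$.

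Finally I would invoke the continuity of degree functions under cut distance: for any two tournamentons one has $\|r_W-r_n\|_{L^1[0,1]}\le 2\cut{W}{W[A_n]}$ (split $[0,1]$ according to the sign of $r_W-r_n$ and apply the definition of the cut distance), which tends to $0$, and since $r_W$ and $r_n$ are $[0,1]$-valued this upgrades to $\|r_W-r_n\|_{L^2[0,1]}\to 0$; therefore $r_W$ is the $L^2$-limit of $r_n$, which is the constant $\tfrac12$, so $W$ is regular. I expect the genuine obstacle to be this converse: because $\wsigma(W)$ is defined via step approximations rather than the operator spectrum of $W$ --- a gap the paper explicitly flags --- one cannot directly identify $1/2\in\wsigma(W)$ with $W\mathbf 1=\tfrac12\mathbf 1$. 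The workaround rests on the inequality $\rho_n\le 1/2$ being asymptotically tight, which forces the Perron eigenvectors of the $A_n$ to be nearly constant, together with the $L^2$-continuity of degree functions to carry the resulting estimate to the limit.
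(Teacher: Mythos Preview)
Your proof is correct and follows essentially the same route as the paper: both directions rely on the row-sum computation for the forward implication, and for the converse both show $\rho_n\to 1/2$, use the identity $\rho_n=\tfrac{1}{2k_n}\langle v_n,\mathbf 1\rangle^2$ (equivalently $v_n^T(A_n+A_n^T)v_n=2\rho_n$) to force the Perron eigenvector toward the constant vector, and then a triangle inequality together with the Frobenius bound to conclude $\|A_n\mathbf 1-\tfrac12\mathbf 1\|\to 0$. Your treatment of the last passage to $W$---bounding $\|r_W-r_n\|_{L^1}$ by $2\cut{W}{W[A_n]}$ and upgrading to $L^2$ via boundedness---is in fact more explicit than the paper, which simply asserts that cut-distance convergence yields $\|j_0-2Wj_0\|_2=0$.
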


\begin{proof}
Fix a tournamenton $W$.
Let $(A_n)_{n\in\NN}$ be a convergent sequence of step approximations of $W$.
Let $k_n$ be the order of $A_n$,
$\rho_n$ the largest real eigenvalue of $A_n$, and
$v_n$ the corresponding eigenvector with norm one.
Further let $\JJ_n$ be the $k_n\times k_n$ matrix with all entries equal to $k_n^{-1}$ and
$j_n$ the $k_n$-dimensional vector with all entries equal to $k_n^{-1/2}$.
Note that $j_n=\JJ_n j_n$ and $1$ is the only non-zero eigenvalue of $\JJ_n$.
Finally, let $j_0$ be the function $[0,1]\to [0,1]$ such that $j_0(x)=1$ for all $x\in [0,1]$.

If $W$ is a regular tournamenton, then $A_nj_n=j_n/2$.
It follows that $1/2$ is an eigenvalue of $A_n$ for every $n\in\NN$ and so $1/2\in\wsigma(W)$.

We next assume that $1/2\in\wsigma(W)$ and show that the tournamenton $W$ is regular.
Since the matrix $A_n$ cannot have a real eigenvalue larger than $1/2$ by Proposition~\ref{prop:matrix},
it follows that the values of $\rho_n$ converge to $1/2$.
Observe that
\[v_n^T\JJ_nv_n=v_n^T(A_n^T+A_n)v_n=v_n^TA_n^Tv_n+v_n^TA_nv_n=2\rho_n.\]
It follows that $<v_n|j_n>^2=2\rho_n$, which implies that
\[\|v_n-j_n\|=2-2<v_n|j_n>=2-2\sqrt{2\rho_n}.\]
Since $\|A_n\|_F\le 1$, we obtain that
\begin{align*}
\|j_n-2A_nj_n\| &\le\|j_n-v_n\|+\|v_n-2A_nv_n\|+2\|A_nv_n-A_nj_n\|\\
                &\le 3\|j_n-v_n\|+1-2\rho_n = 7-2\rho_n-6\sqrt{2\rho_n}.
\end{align*}		
It follows that
\[\lim_{n\to\infty}\|j_n-2A_nj_n\|=0.\]
Since the step tournamentons $W[A_n]$ converge to the tournamenton $W$ in the cut distance,
we obtain that $\|j_0-2Wj_0\|_2=0$
where $Wj_0$ is the function resulting from applying the linear operator given by $W$ to the function $j_0$.
It follows that the tournamenton $W$ is regular.
\end{proof}

\begin{proposition}
\label{prop:qrand}
A tournamenton $W$ is equal to $1/2$ almost everywhere if and only if $\wsigma(W)=\{1/2\}$.
\end{proposition}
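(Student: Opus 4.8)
The plan is to establish the two implications separately; the first is immediate and the second carries all the content.

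For the direction ``$W=1/2$ almost everywhere $\Rightarrow\wsigma(W)=\{1/2\}$'', I would simply exhibit a convenient convergent sequence of step approximations: for each $n$, the step approximation $A_n$ of $W$ coming from the equipartition of $[0,1]$ into $n$ parts has every entry equal to $\frac{1}{2n}$, i.e.\ $A_n=\frac1{2n}\JJ_n$ with $\JJ_n$ the all-ones matrix of order $n$. Since $W[A_n]$ agrees with $W$ off a null set, the sequence $(A_n)_{n\in\NN}$ is convergent, and the spectrum of $A_n$ consists of $1/2$ with multiplicity one together with $0$ with multiplicity $n-1$. These spectra converge as multisets to $\{1/2\}$ plus $0$ of infinite multiplicity, so $\wsigma(W)=\{1/2\}$.

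For the converse, suppose $\wsigma(W)=\{1/2\}$. Since $1/2\in\wsigma(W)$, Proposition~\ref{prop:reg} tells us $W$ is regular, i.e.\ $\int_{[0,1]}W(x,y)\dd y=1/2$ for almost every $x$. I would then set $U(x,y):=W(x,y)-\tfrac12$, so that $U(x,y)=-U(y,x)$ almost everywhere and $\int_{[0,1]}U(x,y)\dd y=0$ for almost every $x$; regarded as an integral operator on $L_2[0,1]$, $U$ is Hilbert--Schmidt with $U^*=-U$. Now \eqref{eq:lambda} gives $C(W,4)=2^4\sum_{\lambda\in\wsigma(W)}\lambda^4=2^4(1/2)^4=1$, while expanding
\[C(W,4)=2^4\int_{[0,1]^4}\prod_{i=1}^{4}\Bigl(\tfrac12+U(x_i,x_{i+1})\Bigr)\dd x_1\dd x_2\dd x_3\dd x_4\]
(indices modulo $4$) over the subset $S\subseteq\{1,2,3,4\}$ of factors in which $U$ is chosen shows that every term with $1\le|S|\le3$ integrates to $0$: since $S$ and its complement are both nonempty edge sets of the $4$-cycle on $x_1,\dots,x_4$, there is a vertex $x_v$ lying on exactly one $S$-edge, and integrating over $x_v$ first yields the factor $\int_{[0,1]}U(x_v,y)\dd x_v=0$ (or $\int_{[0,1]}U(y,x_v)\dd x_v=0$) by regularity. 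Only $|S|=0$ and $|S|=4$ survive, so $C(W,4)=1+2^4\Trace(U^4)$, and therefore $\Trace(U^4)=0$.

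To finish I would use operator theory: from $U^*=-U$ we get $U^*U=-U^2$, hence $(U^*U)^2=U^4$; the operator $U^*U$ is non-negative, compact and trace-class, so it has non-negative eigenvalues $\mu_1,\mu_2,\dots$ with $\sum_j\mu_j^2=\Trace\bigl((U^*U)^2\bigr)=\Trace(U^4)=0$. Hence every $\mu_j=0$, so $\int_{[0,1]^2}U(x,y)^2\dd x\dd y=\Trace(U^*U)=\sum_j\mu_j=0$, i.e.\ $U=0$ almost everywhere and $W=1/2$ almost everywhere. The one step that needs a little care is identifying the iterated integral $\int U(x_1,x_2)U(x_2,x_3)U(x_3,x_4)U(x_4,x_1)$ with $\Trace(U^4)$ — this is legitimate because $U$ Hilbert--Schmidt forces $U^2$ and $U^4$ to be trace-class and $\Trace(AB)=\int_{[0,1]^2}a(x,y)b(y,x)\dd x\dd y$ for Hilbert--Schmidt $A,B$ with kernels $a,b$ — together with keeping straight which factors in the $C(W,4)$-expansion vanish by skew-symmetry and which by regularity; conceptually nothing here is hard.
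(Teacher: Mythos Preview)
Your argument is correct, and the hard direction takes a genuinely different route from the paper's. The paper stays inside the step-approximation framework: it writes $B_n=A_n-\JJ_n$ (skew-symmetric with $j_n$ in its kernel, since regularity gives $A_nj_n=j_n/2$), observes that the non-zero eigenvalues of $A_n$ other than $1/2$ are exactly the purely imaginary eigenvalues of $B_n$, and then argues by contradiction: if $W$ differed from $1/2$ on a set of positive measure, the positive cut distance would force $\|B_nv_n\|\ge\delta$ for suitable unit vectors, hence $B_n^2$ would have an eigenvalue below $-\delta^2$, producing a purely imaginary element of $\wsigma(W)$ of absolute value at least $\delta$. Your approach instead exploits \eqref{eq:lambda} at $\ell=4$ directly on the operator level: the expansion of $C(W,4)$ with $U=W-\tfrac12$ and the regularity/skew-symmetry cancellations yield $C(W,4)=1+2^4\Trace(U^4)$, whence $\Trace\bigl((U^*U)^2\bigr)=\Trace(U^4)=0$ and $U=0$ in $L_2$. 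This is cleaner and avoids revisiting step approximations; the paper's version, on the other hand, is more self-contained (it does not invoke \eqref{eq:lambda} or operator-trace identities) and meshes with the surrounding arguments, which are all phrased in terms of eigenvalues of the matrices $A_n$. One small remark: in your vanishing argument for $1\le|S|\le3$ you could simply note that a proper nonempty subgraph of the $4$-cycle always has a vertex of degree one, so the integral over that variable is $\int U(\cdot,y)\,\dd(\cdot)=0$ (in either argument slot, by regularity together with $U(x,y)=-U(y,x)$); your write-up already says this, just make sure the reader sees why both orientations of the dangling edge are covered.
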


\begin{proof}
Fix a tournamenton $W$.
Let $(A_n)_{n\in\NN}$ be a convergent sequence of step approximations of $W$, and
let $k_n$ be the order of $A_n$.
Further let $\JJ_n$ be the $k_n\times k_n$ matrix with all entries equal to $(2k_n)^{-1}$ and
$j_n$ the $k_n$-dimensional vector with all entries equal to $k_n^{-1/2}$.
Note that the definition of the matrix $\JJ_n$ differs from that in the proof of Proposition~\ref{prop:reg}.
In particular, it holds that $\JJ_nj_n=j_n/2$.

If the tournamenton $W$ is equal to $1/2$ almost everywhere,
then $A_n=\JJ_n$ for every $n\in\NN$ and the only non-zero eigenvalue of $A_n$ is $1/2$.
It follows that $\wsigma(W)=\{1/2\}$.

We next assume that $\wsigma(W)=\{1/2\}$ and show that $W$ is equal to $1/2$ almost everywhere.
Since $1/2\in\wsigma(W)$, the tournamenton $W$ is regular and it follows that $A_nj_n=j_n/2$.
Define $B_n=A_n-\JJ_n$ and observe that $B_n$ is a skew-symmetric matrix and that $B_nj_n$ is the zero vector,
i.e., the vector $j_n$ belongs to the kernel of~$B_n$.
Hence, the non-zero eigenvalues of $A_n$ are $1/2$ and the non-zero eigenvalues of~$B_n$,
which are square roots of the (real and negative) eigenvalues of the symmetric matrix $B_n^2$;
in particular, they all are purely imaginary.

Suppose that $W$ is not equal to $1/2$ almost everywhere,
in particular, the cut distance of $W$ and the tournamenton equal to $1/2$ everywhere is positive.
Since the tournamentons $W[A_n]$ converge to $W$ in the cut distance,
there exists a sequence of vectors $v_n\in\RR^{k_n}$ with $\|v_n\|=1$ and a real $\delta>0$ such that
\[\lim_{n\to\infty}\|B_nv_n\|\ge\delta.\]
In particular,
\[\lim_{n\to\infty} |v_n^TB_n^2v_n|\ge\delta^2,\]
which implies that the smallest eigenvalue $B_n^2$ is less than $-\delta^2$.
Hence, every $B_n$ has a purely imaginary eigenvalue with absolute value at least $\delta$.
However, this is impossible since $\wsigma(W)=\{1/2\}$.
We conclude that $W$ is equal to $1/2$ almost everywhere.
\end{proof}

\section{Cycles of length not divisible by four}
\label{sec:ck12}

In this section, we compute $c(\ell)$ when $\ell$ is not divisible by four and
we characterize tournamentons that are extremal.
The proofs of both Theorem~\ref{thm:ck1} and~\ref{thm:ck2}
are based on the analysis of spectra of linear operators associated with tournamentons,
however,
the arguments apply the same in the setting of tournament matrices.

\begin{theorem}
\label{thm:ck1}
If $\ell\ge 3$ is odd,
then $C(W,\ell)\le 1$ for every tournamenton $W$, and equality holds if and only if $W$ is regular.
In particular, $c(\ell)=1$.
\end{theorem}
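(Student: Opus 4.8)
The plan is to use the identity \eqref{eq:lambda}, which expresses $C(W,\ell)$ in terms of the multiset $\wsigma(W)$, together with the structural properties of $\wsigma(W)$ collected in Proposition~\ref{prop:limitset}. Since $\ell$ is odd, for any complex number $\lambda$ with $\Real\lambda\ge 0$ we have $\Real(\lambda^\ell)\le|\lambda|^\ell$, and because $C(W,\ell)$ is a real number equal to $2^\ell\sum_{\lambda\in\wsigma(W)}\lambda^\ell$, only the real parts contribute; hence
\[
C(W,\ell)=2^\ell\sum_{\lambda\in\wsigma(W)}\Real(\lambda^\ell)\le 2^\ell\sum_{\lambda\in\wsigma(W)}|\lambda|^\ell.
\]
So the whole theorem reduces to bounding $\sum_{\lambda\in\wsigma(W)}|\lambda|^\ell$ by $2^{-\ell}$, and to analyzing when equality can hold.

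The key step is the bound on $\sum|\lambda|^\ell$. First I would record that every $\lambda\in\wsigma(W)$ satisfies $|\lambda|\le 1/2$: by Proposition~\ref{prop:matrix} the largest real eigenvalue $\rho$ of a complementary matrix is at most $1/2$ (it cannot exceed the value for the regular tournament, since the sum of real parts is $1/2$ and all real parts are non-negative), and all eigenvalues have absolute value at most $\rho$; this passes to the limit multiset. Thus $|\lambda|^\ell\le (1/2)^{\ell-3}|\lambda|^3\le(1/2)^{\ell-3}|\lambda|^2\cdot(1/2)$, so it suffices to control $\sum_{\lambda\in\wsigma(W)}|\lambda|^2$. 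Splitting $\wsigma(W)$ into $X^+$ (where $\Real\lambda\ge|\lambda|/2$) and $X^-$ (where $\Real\lambda\le|\lambda|/2$) exactly as in the proof of Proposition~\ref{prop:limitset}, the estimates \eqref{eq:X+} and \eqref{eq:X-} give $\sum_{\lambda\in X^+}|\lambda|^2\le\tfrac12\sum_{X^+}|\lambda|\le\tfrac12$ and $\sum_{\lambda\in X^-}|\lambda|^2\le\tfrac12$. Combining, $\sum_{\lambda\in\wsigma(W)}|\lambda|^\ell\le (1/2)^{\ell-2}\cdot(1/2)^{?}$ — here I would need to be a little more careful: the crude bound $|\lambda|^\ell\le (1/2)^{\ell-2}|\lambda|^2$ only yields $C(W,\ell)\le 2^\ell(1/2)^{\ell-2}\cdot 1=4$, which is too weak. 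So the real work is a sharper argument: I would argue that the contribution of $X^-$ to $\Real(\lambda^\ell)$ is actually non-positive or small, using that for $\lambda\in X^-$ one has $\Real(\lambda^2)\le -|\lambda|^2/2<0$, and more generally that elements of $X^-$ have argument bounded away from $0$; meanwhile for $X^+$ one uses $\sum_{X^+}|\lambda|\le 1$ directly to get $\sum_{X^+}\Real(\lambda^\ell)\le\sum_{X^+}|\lambda|^\ell\le (1/2)^{\ell-1}\sum_{X^+}|\lambda|\le (1/2)^{\ell-1}$, whence $C(W,\ell)\le 2^\ell(1/2)^{\ell-1}+(\text{$X^-$ contribution})$. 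The anticipated main obstacle is making the $X^-$ contribution provably at most $1-(1/2)^{\ell-1}\cdot 2^\ell=\ldots$; I expect one wants the stronger inequality $C(W,\ell)\le 2\sum_{\lambda\in\wsigma(W)}\Real\lambda=1$, which would follow if $\Real(\lambda^\ell)\le\Real\lambda\cdot 2^{-(\ell-1)}$ for all relevant $\lambda$, i.e. if $|\lambda|\le 1/2$ and $\Real(\lambda^\ell)\le (1/2)^{\ell-1}\Real\lambda$. Writing $\lambda=re^{i\theta}$ with $r\le 1/2$ and $\theta\in[-\pi/2,\pi/2]$, this becomes $r^\ell\cos(\ell\theta)\le (1/2)^{\ell-1}r\cos\theta$; since $r^{\ell-1}\le(1/2)^{\ell-1}$, it suffices that $\cos(\ell\theta)\le\cos\theta$ whenever $\cos\theta\ge 0$, and when $\cos(\ell\theta)<0$ it is immediate. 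The inequality $\cos(\ell\theta)\le\cos\theta$ on $[-\pi/2,\pi/2]$ with $\ell$ odd is the elementary trigonometric fact I would verify (it can fail for $\ell$ even, which is exactly why the hypothesis is needed), and this is the one genuine computation.

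Finally, for the equality characterization: equality $C(W,\ell)=1$ forces equality throughout, in particular $\sum_{\lambda\in\wsigma(W)}\Real\lambda=1/2$, which by Proposition~\ref{prop:limitset} (the real parts sum to \emph{at most} $1/2$) together with $r^{\ell-1}=(1/2)^{\ell-1}$ and $\cos(\ell\theta)=\cos\theta$ for every contributing $\lambda$ forces each $\lambda$ with $\Real\lambda>0$ to equal $1/2$; since the real parts already sum to $1/2$, there is a single such eigenvalue, namely $1/2$ itself (with the remaining spectrum having zero real part and contributing nothing — and in fact an $X^-$-type term would strictly decrease $C(W,\ell)$, forcing $\wsigma(W)$ to have no elements of negative-or-zero real part other than those that cancel, so actually $1/2\in\wsigma(W)$). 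Then Proposition~\ref{prop:reg} says $1/2\in\wsigma(W)$ is equivalent to $W$ being regular, giving the "if and only if". Conversely, for a regular $W$ one has $1/2\in\wsigma(W)$, and I would check that in fact $C(W,\ell)=1$ for regular $W$ with $\ell$ odd using the same inequality now forced to be an equality — or more directly, that $C(W,\ell)=2\sum\Real\lambda=1$ whenever the bound $\Real(\lambda^\ell)\le 2^{-(\ell-1)}\Real\lambda$ is saturated, which holds once $1/2\in\wsigma(W)$ and all other eigenvalues are purely imaginary; that last point (regular implies the non-$1/2$ part of the spectrum is purely imaginary) follows from the decomposition $A_n=\JJ_n+B_n$ with $B_n$ skew-symmetric, exactly as in the proof of Proposition~\ref{prop:qrand}. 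The statement $c(\ell)=1$ is then immediate since $c(\ell)=\max_W C(W,\ell)$.
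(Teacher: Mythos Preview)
Your overall strategy---use \eqref{eq:lambda} and bound each $\Real(\lambda^\ell)$ linearly in $\Real\lambda$, then sum---is exactly the right one, but the specific pointwise inequality you aim for is false. You claim that for $\theta\in[-\pi/2,\pi/2]$ and odd $\ell$ one has $\cos(\ell\theta)\le\cos\theta$ (at least when $\cos(\ell\theta)>0$); this already fails for $\ell=5$. Take $\theta=\arccos(1/4)\in(0,\pi/2)$: then $\cos(5\theta)=T_5(1/4)=61/64>1/4=\cos\theta$. Correspondingly, your target inequality $\Real(\lambda^\ell)\le(1/2)^{\ell-1}\Real\lambda$ fails for $\lambda=\tfrac12 e^{i\theta}$, where the left side is $61/2048$ and the right side is $16/2048$. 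So the intended reduction ``sum over $\wsigma(W)$ and use $\sum\Real\lambda\le 1/2$'' does not go through with the constant $(1/2)^{\ell-1}$.

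The paper repairs this by weakening the pointwise bound to $\Real z^\ell\le\ell\,\rho^{\ell-1}\Real z$ for $|z|\le\rho$ (proved via the elementary $\sin(\ell\beta)<\ell\sin\beta$ after the substitution $\beta=\pm(\pi/2-\alpha)$, with equality only when $\Real z=0$), and then recovers the lost factor~$\ell$ by treating the spectral radius $\rho$ separately rather than applying the bound to it:
\[
\sum_{\lambda\in\wsigma(W)}\Real\lambda^\ell
\;\le\;\rho^\ell+\ell\rho^{\ell-1}\!\!\!\sum_{\lambda\in\wsigma(W)\setminus\{\rho\}}\!\!\!\Real\lambda
\;\le\;\Bigl(\rho+\!\!\!\sum_{\lambda\in\wsigma(W)\setminus\{\rho\}}\!\!\!\Real\lambda\Bigr)^{\!\ell}
\;\le\;\Bigl(\tfrac12\Bigr)^{\!\ell},
\]
the middle step being the binomial inequality $a^\ell+\ell a^{\ell-1}b\le(a+b)^\ell$ for $a,b\ge 0$. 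This two-step packaging (slacken by a factor~$\ell$, then absorb it via the binomial bound against the distinguished eigenvalue $\rho$) is the missing idea. The equality analysis then follows cleanly: equality in the pointwise step forces $\Real\lambda=0$ for every $\lambda\neq\rho$, and equality in the last step forces $\rho=1/2$, whence $W$ is regular by Proposition~\ref{prop:reg}; conversely, for regular $W$ all of $\wsigma(W)\setminus\{1/2\}$ is purely imaginary and contributes zero real part to odd powers.
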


\begin{proof}
Fix $\ell\ge 3$ and a tournamenton $W$.
Let $\rho$ be the largest positive real contained in $\wsigma(W)$.
We start with establishing the following.
If $z$ is a complex number with $|z|\le\rho$ and $\Real z\ge 0$,
then
\begin{equation}
\Real z^{\ell}\le \ell \rho^{\ell-1} \Real z, \label{eq:ck1}
\end{equation}
and equality holds if and only if $\Real z=0$.
Consider such $z$ and
let $\alpha$ be such that $\Real z=|z|\cdot\cos\alpha$ and $\Imag z=|z|\cdot\sin\alpha$.
If $\Real z=0$, the estimate \eqref{eq:ck1} holds with equality.
Hence, we can assume that $\alpha\in [0,\pi/2)$ (considering the complex conjugate of $z$ if needed).
If $\ell$ is one modulo four, we set $\beta=\pi/2-\alpha$ and obtain the following:
\[
\Real z^{\ell}=|z|^{\ell}\cos\ell\alpha
              =|z|^{\ell}\sin\ell\beta
	      <\ell|z|^{\ell}\sin\beta
	      =\ell|z|^{\ell}\cos\alpha
	      =\ell|z|^{\ell-1}\Real z.
\]
If $\ell$ is three modulo four, we set $\beta=-\pi/2+\alpha$ and obtain the following:
\[
\Real z^{\ell}=|z|^{\ell}\cos\ell\alpha
              =|z|^{\ell}\sin\ell\beta
	      <-\ell|z|^{\ell}\sin\beta
	      =\ell|z|^{\ell}\cos\alpha
	      =\ell|z|^{\ell-1}\Real z.
\]
We now obtain the estimate \eqref{eq:ck1} using $|z|\le\rho$.

We next bound the sum of the $\ell$-th powers of the elements of $\wsigma(W)$
by applying~\eqref{eq:ck1} to every element of $\wsigma(W)$ except for $\rho$.
Note that we treat $\wsigma(W)$ as a multiset,
i.e., if the multiplicity of $\rho$ in $\wsigma(W)$ is larger than one,
then $\wsigma(W)\setminus\{\rho\}$ contains~$\rho$.
\begin{equation}
  \sum_{\lambda\in\wsigma(W)}\lambda^{\ell}
  =\sum_{\lambda\in\wsigma(W)}\Real\lambda^{\ell}
  \le\rho^\ell+\;\; \sum_{\mathclap{\lambda\in\wsigma(W)\setminus\{\rho\}}}\;\; \ell \rho^{\ell-1}\Real\lambda
  \le\left(\rho+\;\; \sum_{\mathclap{\lambda\in\wsigma(W)\setminus\{\rho\}}}\;\; \Real\lambda\right)^\ell
  \label{eq:ck1s}
\end{equation}  
Since the sum of the real parts of the elements of $\wsigma(W)$ is at most $1/2$ by Proposition~\ref{prop:limitset},
we obtain that
\[\sum_{\lambda\in\wsigma(W)}\lambda^{\ell}\le\frac{1}{2^{\ell}}.\]
The identity \eqref{eq:lambda} now yields that $C(W,\ell)\le 1$ and
since the choice of $W$ was arbitrary, it follows that $c(\ell)\le 1$.
Moreover, if $C(W,\ell)=1$, the sum of real parts of the elements of $\wsigma(W)$ is $1/2$ and
the estimate \eqref{eq:ck1} holds with equality for every element of $\wsigma(W)\setminus\{\rho\}$.
In particular, the real part of every element of $\wsigma(W)\setminus\{\rho\}$ is zero.
Hence, if $C(W,\ell)=1$, then $\rho=1/2$ and the tournamenton $W$ is regular by Proposition~\ref{prop:reg}.
\end{proof}

We next focus on the case when $\ell$ is even but not divisible by four.

\begin{theorem}
\label{thm:ck2}
If $\ell\ge 6$ is even but not divisible by four,
then $C(W,\ell)\le 1$ for every tournamenton $W$ and equality holds if and only if $W$ is equal to $1/2$ almost everywhere.
In particular, $c(\ell)=1$.
\end{theorem}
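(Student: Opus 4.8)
The plan is to run the argument of Theorem~\ref{thm:ck1} essentially verbatim on the multiset $\wsigma(W)$ using \eqref{eq:lambda}, the only new ingredient being the following phenomenon special to $\ell\equiv 2\pmod 4$: if $\lambda$ is purely imaginary then $\Real\lambda^\ell=-|\lambda|^\ell\le 0$, with equality only when $\lambda=0$. In the odd case such eigenvalues were neutral (their $\ell$-th powers were purely imaginary), whereas here they strictly \emph{decrease} $\sum_{\lambda}\lambda^\ell$, and this is exactly what upgrades the conclusion from ``$W$ regular'' to ``$W$ equal to $1/2$ almost everywhere''.

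Write $\ell=4k+2$. The first step is to prove the sharpened form of \eqref{eq:ck1}: letting $\rho$ be the largest positive real in $\wsigma(W)$ (if $\wsigma(W)=\emptyset$ then $C(W,\ell)=0<1$ and we are done), I claim that for every $z\in\CC$ with $|z|\le\rho$ and $\Real z\ge 0$ one has $\Real z^\ell\le\ell\rho^{\ell-1}\Real z$ with equality \emph{if and only if} $z=0$. After conjugating we may take $z=re^{\unit\alpha}$ with $r>0$ and $\alpha\in[0,\pi/2]$. If $\alpha=\pi/2$ then the left side equals $r^\ell\cos((2k+1)\pi)=-r^\ell<0$ while the right side is $0$. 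If $\alpha\in[0,\pi/2)$ then $\cos\alpha>0$, and since $r\le\rho$ it suffices to establish the strict inequality $\cos(\ell\alpha)<\ell\cos\alpha$; substituting $\beta=\pi/2-\alpha\in(0,\pi/2]$ and using $\ell\pi/2=(2k+1)\pi$ turns this into $\cos(\ell\beta)+\ell\sin\beta>0$, which follows from a two-line case split: if $\sin\beta>1/\ell$ then $\ell\sin\beta>1\ge-\cos(\ell\beta)$, and if $\sin\beta\le 1/\ell$ then Jordan's inequality gives $\beta\le\pi/(2\ell)$, hence $\ell\beta\le\pi/2$, so $\cos(\ell\beta)\ge 0$ while $\ell\sin\beta>0$.

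Granting this, the rest of the upper bound is identical to the derivation of \eqref{eq:ck1s}: applying the inequality to every $\lambda\in\wsigma(W)\setminus\{\rho\}$ and putting $S=\sum_{\lambda\in\wsigma(W)\setminus\{\rho\}}\Real\lambda\ge 0$,
\[\sum_{\lambda\in\wsigma(W)}\lambda^\ell=\sum_{\lambda\in\wsigma(W)}\Real\lambda^\ell\le\rho^\ell+\ell\rho^{\ell-1}S\le(\rho+S)^\ell\le\frac{1}{2^\ell},\]
where the last step uses that the real parts of the elements of $\wsigma(W)$ sum to at most $1/2$ (Proposition~\ref{prop:limitset}); with \eqref{eq:lambda} this gives $C(W,\ell)\le 1$, and hence $c(\ell)=1$ since the lower bound $c(\ell)\ge 1$ is witnessed by the tournamenton equal to $1/2$ everywhere.

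Finally, for the equality characterization, suppose $C(W,\ell)=1$, so every inequality above is tight. Tightness of $\rho^\ell+\ell\rho^{\ell-1}S\le(\rho+S)^\ell$ with $\rho>0$ and $\ell\ge 2$ forces $S=0$ (the binomial expansion has a nonnegative $\binom{\ell}{2}\rho^{\ell-2}S^2$ term that must vanish), whence $\rho=1/2$; then $S=0$ together with non-negativity of the real parts gives $\Real\lambda=0$ for every $\lambda\in\wsigma(W)\setminus\{\rho\}$; and tightness of the first inequality (a sum of the non-positive quantities $\Real\lambda^\ell$ equal to $0$) forces $\Real\lambda^\ell=0$, i.e.\ equality in the sharpened \eqref{eq:ck1}, i.e.\ $\lambda=0$, which is impossible since $0\notin\wsigma(W)$. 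Hence $\wsigma(W)=\{1/2\}$, and Proposition~\ref{prop:qrand} yields that $W$ equals $1/2$ almost everywhere; the converse is immediate. The only genuinely new content beyond Theorem~\ref{thm:ck1} is noticing the sign reversal for imaginary eigenvalues and verifying the elementary estimate $\cos(\ell\beta)+\ell\sin\beta>0$ on $(0,\pi/2]$ — that elementary estimate is the main thing one has to check; everything else is bookkeeping.
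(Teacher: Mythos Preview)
Your proof is correct and follows essentially the same approach as the paper's: both reduce to the pointwise estimate $\Real z^{\ell}\le \ell\rho^{\ell-1}\Real z$ with equality only at $z=0$, prove it via the substitution $\beta=\pi/2-\alpha$ and the elementary inequality $-\cos\ell\beta<\ell\sin\beta$ on $[0,\pi/2]$ (your case split on $\sin\beta\lessgtr 1/\ell$ is equivalent to the paper's split on $\beta\lessgtr\pi/(2\ell)$ via Jordan's inequality), and then invoke Proposition~\ref{prop:qrand}. Your equality analysis is slightly more roundabout---the paper observes directly that the first inequality in the chain is strict unless $\wsigma(W)=\{\rho\}$, whereas you first extract $S=0$ from tightness of the binomial step---but the content is the same.
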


\begin{proof}
Fix $\ell\ge 6$ and a tournamenton $W$, and
let $\rho$ be the largest positive real contained in $\wsigma(W)$.
We start with establishing the following.
If $z$ is a complex number with $|z|\le\rho$ and $\Real z\ge 0$,
then
\begin{equation}
\Real z^{\ell}\le \ell \rho^{\ell-1} \Real z, \label{eq:ck2}
\end{equation}
and equality holds only if $z=0$.
Consider such $z\not=0$ and
let $\alpha$ be such that $\Real z=|z|\cdot\cos\alpha$ and $\Imag z=|z|\cdot\sin\alpha$.
By symmetry, we can assume that $\alpha\in [0,\pi/2]$.
Let $\beta=\pi/2-\alpha$.
We first show that
\begin{equation}
-\cos\ell\beta<\ell\sin\beta.\label{eq:ck2sin}
\end{equation}
If $0\le\beta<\frac{\pi}{2\ell}$, then $\cos\ell\beta>0$, and
the inequality in \eqref{eq:ck2sin} holds since its left side is negative while the right side is non-negative.
If $\beta\ge\frac{\pi}{2\ell}$, then $\sin\beta>\frac{1}{\ell}$, and
the inequality in \eqref{eq:ck2sin} holds since its right side is larger than one.
We now apply \eqref{eq:ck2sin} as follows.
\[
\Real z^{\ell}=|z|^{\ell}\cos\ell\alpha
              =-|z|^{\ell}\cos\ell\beta
	      <\ell|z|^{\ell}\sin\beta
	      =\ell|z|^{\ell}\cos\alpha
	      =\ell|z|^{\ell-1}\Real z.
\]
We now obtain the estimate \eqref{eq:ck2} using $|z|\le\rho$.

Similarly to the proof of Theorem~\ref{thm:ck2},
we bound the sum of the $\ell$-th powers of the elements of $\wsigma(W)$ using \eqref{eq:ck2}
as follows.
\[
  \sum_{\lambda\in\wsigma(W)}\lambda^{\ell}
  =\sum_{\lambda\in\wsigma(W)}\Real\lambda^{\ell}
  \le\rho^\ell+\sum_{\lambda\in\wsigma(W)\setminus\{\rho\}}\ell \rho^{\ell-1}\Real\lambda
  \le\left(\rho+\sum_{\lambda\in\wsigma(W)\setminus\{\rho\}}\Real\lambda\right)^\ell
\]
Note that the inequality is strict unless $\rho$ is the only non-zero element of $\wsigma(W)$.
Since the sum of the real parts of the elements of $\wsigma(W)$ is at most $1/2$,
we obtain using \eqref{eq:lambda} that
\[C(W,\ell)=2^\ell\sum_{\lambda\in\wsigma(W)}\lambda^{\ell}\le 1.\]
Since the choice of $W$ was arbitrary, it follows that $c(\ell)\le 1$.
Moreover, if $C(W,\ell)=1$, then $\rho$ is the only element of $\wsigma(W)$ and $\rho=1/2$.
Consequently, if $C(W,\ell)=1$, then $W$ is equal to $1/2$ almost everywhere by Proposition~\ref{prop:qrand}.
\end{proof}

\section{Cycles of length divisible by four}
\label{sec:ck4}

The proof of the main result of this section
requires bounding the spectral radius of skew-symmetric matrices with all entries between $-1$ and $+1$.
To get the tight bound on the spectral radius of such matrices,
we need the following auxiliary lemma.

\begin{lemma}
\label{lm:sumsq}
Let $s_1,\ldots,s_k$ be positive reals.
Further let $x_1,\ldots,x_k$ be non-negative reals such that
$x_1\ge x_2\ge \cdots \ge x_k\ge 0$ and
\[
\begin{array}{ccccccccccccccccc}
 x_1 &   &     &   &        &   &     & \le & s_1 & + & s_2 & + & s_3 & + & \cdots & + & s_k\\
 x_1 & + & x_2 &   &        &   &     & \le & s_1 & + & 2s_2 & + & 2s_3 & + & \cdots & + & 2s_k\\
 x_1 & + & x_2 & + & x_3    &   &     & \le & s_1 & + & 2s_2 & + & 3s_3 & + & \cdots & + & 3s_k\\
 \vdots && \vdots &&        & \ddots &&     & \vdots && \vdots & & \vdots & &        &   & \vdots\\
 x_1 & + & x_2 & + & \cdots & + & x_k & \le & s_1 & + & 2s_2 & + & 3s_3 & + & \cdots & + & ks_k,
\end{array}
\]
i.e., the $m$-th inequality, $m\in [k]$ is
\[\sum_{i=1}^m x_i \le \sum_{i=1}^m \min\{i,m\}\cdot s_i.\]
It then holds that
\[\sum_{i=1}^k x_i^2\le\sum_{i=1}^k\left(\sum_{j=i}^k s_j\right)^2\]
and equality holds if and only if 
$x_i=s_i+s_{i+1}+\cdots+s_k$ for all $i\in [k]$.
\end{lemma}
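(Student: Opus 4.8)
The plan is to recognise the hypothesis as a statement of \emph{weak majorization} and then exploit it through a double application of summation by parts. Set $t_i := \sum_{j=i}^{k} s_j$ for $i\in[k]$, so that the inequality to be proved reads $\sum_{i=1}^k x_i^2 \le \sum_{i=1}^k t_i^2$ and the asserted equality case is $x_i = t_i$ for all $i$. The first thing I would record is that the $m$-th hypothesis is \emph{exactly} $\sum_{i=1}^m x_i \le \sum_{i=1}^m t_i$: indeed $\sum_{i=1}^m t_i = \sum_{i=1}^m \sum_{j=i}^k s_j = \sum_{j=1}^k \min\{j,m\}\, s_j$, which is precisely the right-hand side of the $m$-th displayed inequality. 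The second thing to record is that, since each $s_j$ is positive, the sequence $(t_i)$ is strictly decreasing and strictly positive. It is convenient to also put $x_{k+1} := 0$ and $t_{k+1} := 0$; then every consecutive difference $x_m - x_{m+1}$ is $\ge 0$ and every difference $t_m - t_{m+1}$ is $>0$ for $m\in[k]$ (including $t_k - t_{k+1} = s_k > 0$).

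Next I would run summation by parts twice. Writing $X_m := \sum_{i=1}^m x_i$ and $T_m := \sum_{i=1}^m t_i$, Abel summation gives the identity
\[\sum_{i=1}^k x_i^2 = \sum_{m=1}^k X_m (x_m - x_{m+1}).\]
Since every factor $x_m - x_{m+1}$ is non-negative and $X_m \le T_m$ by the hypothesis, the right-hand side is at most $\sum_{m=1}^k T_m (x_m - x_{m+1})$, and swapping the order of summation (a telescoping computation using $x_{k+1}=0$) identifies this last sum with $\sum_{i=1}^k t_i x_i$. Performing the same manoeuvre with the roles of the two sequences interchanged,
\[\sum_{i=1}^k t_i x_i = \sum_{m=1}^k X_m (t_m - t_{m+1}) \le \sum_{m=1}^k T_m (t_m - t_{m+1}) = \sum_{i=1}^k t_i^2,\]
again using $X_m \le T_m$ together with the positivity of $t_m - t_{m+1}$, and the telescoping identity $\sum_{m=1}^k T_m (t_m - t_{m+1}) = \sum_i t_i^2$ (which relies on $t_{k+1}=0$). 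Chaining the two estimates yields $\sum_{i=1}^k x_i^2 \le \sum_{i=1}^k t_i^2$.

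For the equality case, suppose $\sum_{i=1}^k x_i^2 = \sum_{i=1}^k t_i^2$. Being squeezed between them, $\sum_{i=1}^k t_i x_i$ equals both, so in particular the inequality in the second chain is an equality, i.e.\ $\sum_{m=1}^k (T_m - X_m)(t_m - t_{m+1}) = 0$. Every summand here is non-negative and $t_m - t_{m+1} > 0$ for all $m\in[k]$, so $X_m = T_m$ for every $m$, which forces $x_i = t_i$ for every $i\in[k]$; the converse is immediate. I do not expect a serious obstacle; the one genuinely useful idea is the second summation by parts, which converts $\sum t_i x_i$ back into a form to which $X_m \le T_m$ applies, and the point needing a little care is that it is precisely the \emph{strict} positivity of the differences $t_m - t_{m+1}$ (equivalently $s_m > 0$) that pins down the equality case. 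As an alternative one could simply invoke the standard majorization inequality that $(x_i)$ weakly majorized by $(t_i)$ implies $\sum g(x_i) \le \sum g(t_i)$ for every non-decreasing convex $g$, applied to $g(u)=u^2$ on $[0,\infty)$, but the self-contained argument above is short enough that I would prefer to present it directly.
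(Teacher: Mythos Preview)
Your proof is correct and takes a genuinely different route from the paper. Setting $t_i=\sum_{j\ge i}s_j$, you observe that the hypotheses say precisely that the partial sums of $(x_i)$ are dominated by those of $(t_i)$, and then you prove $\sum x_i^2\le\sum t_ix_i\le\sum t_i^2$ by two Abel summations; the equality analysis comes out cleanly from the second step because every weight $t_m-t_{m+1}=s_m$ is strictly positive. This is exactly the standard ``weak majorization implies $\sum g(x_i)\le\sum g(t_i)$ for nondecreasing convex $g$'' argument specialised to $g(u)=u^2$, carried out by hand.

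The paper instead argues variationally: it uses compactness to pick a maximiser of $\sum x_i^2$ over the feasible polytope, then shows by two separate $\varepsilon$-perturbation arguments that any maximiser must have strictly decreasing entries and must saturate every constraint $\sum_{i\le m}x_i=S_m$, forcing $x_i=S_i-S_{i-1}=t_i$. Your approach is shorter, avoids the compactness/perturbation machinery entirely, and makes the connection to majorization explicit; the paper's approach, on the other hand, identifies the extremiser constructively without needing to spot the Abel identity. Either would be acceptable here, and yours is arguably the cleaner of the two.
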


\begin{proof}
Let $S_i$, $i\in [k]$, be the right side of the $i$-th inequality listed in the statement of the lemma, and set $S_0=0$.
Observe that \[S_1-S_0>S_2-S_1>\cdots>S_k-S_{k-1}=s_k.\]
Since the set of reals $x_1\ge x_2\ge \cdots \ge x_k\ge 0$ that
satisfy the $k$ inequalities $x_1+\cdots+x_i\le S_i$, $i\in [k]$, is compact,
there exists a $k$-tuple $x_1,\ldots,x_k$ that maximizes the sum $x_1^2+\cdots+x_k^2$
subject to $x_1\ge x_2\ge \cdots \ge x_k\ge 0$ and the $k$ inequalities.
Fix such a $k$-tuple. Observe that $x_k\ge s_k>0$.

We first establish that $x_1>x_2>\cdots>x_k$.
Suppose that there exists $i$ such that $x_i=x_{i+1}$. Choose the smallest such $i$,
and let $i'$ be the largest index such that $x_{i'}=x_i$.
Next suppose that $x_1+\cdots+x_j=S_j$ for some $j\in\{i,\ldots,i'-1\}$.
Since $x_1+\cdots+x_{j-1}\le S_{j-1}$, it follows that $x_j\ge S_j-S_{j-1}$.
We next obtain using that $x_j=x_{j+1}$ the following:
\[x_1+\cdots+x_j+x_{j+1}=x_1+\cdots+x_j+x_j\ge S_j+(S_j-S_{j-1})>S_j+(S_{j+1}-S_j)=S_{j+1},\]
which violates the $(j+1)$-th inequality listed in the statement of the lemma.
Hence, it holds that $x_1+\cdots+x_j<S_j$ for every $j=i,\ldots,i'-1$.
Choose $\varepsilon>0$ such that
\begin{itemize}
\item $x_1+\cdots+x_j+\varepsilon\le S_j$ for every $j=i,\ldots,i'-1$,
\item if $i\ge 2$, then $x_i+\varepsilon\le x_{i-1}$,
\item if $i'\le k-1$, then $x_{i'}-\varepsilon\ge x_{i'+1}$, and
\item if $i'=k$, then $x_{i'}-\varepsilon\ge 0$.
\end{itemize}
Consider $x'_1,\ldots,x'_k$ such that
\[x_j=\begin{cases}
      x_j+\varepsilon & \mbox{if $j=i$,} \\
      x_j-\varepsilon & \mbox{if $j=i'$, and} \\
      x_j & \mbox{otherwise.}
      \end{cases}\]
Observe that $x'_1\ge x'_2\ge\cdots\ge x'_k\ge 0$ and $x'_1+\cdots+x'_j\le S_j$ for every $j\in [k]$.
Since the sum of the squares of $x'_1,\ldots,x'_k$ is larger than the sum of the squares of $x_1,\ldots,x_k$,
we obtain that the $k$-tuple $x_1,\ldots,x_k$ does not maximize the sum of the squares subject
to $x_1\ge x_2\ge \cdots \ge x_k\ge 0$ and the $k$ inequalities listed in the statement of the lemma.
This contradicts the choice of $x_1,\ldots,x_k$.
Hence, we have established that $x_1>x_2>\cdots>x_k$.

We next show that $x_1+\cdots+x_i=S_i$ for every $i\in [k]$.
Suppose the opposite, that $x_1+\cdots+x_i<S_i$ for some $i$, and choose $\varepsilon>0$ such that
\begin{itemize}
\item $x_1+\cdots+x_i+\varepsilon\le S_i$,
\item if $i\ge 2$, then $x_i+\varepsilon\le x_{i-1}$,
\item if $i\le k-2$, then $x_{i+1}-\varepsilon\ge x_{i+2}$, and
\item if $i=k-1$, then $x_{i+1}-\varepsilon\ge 0$.
\end{itemize}
Consider $x'_1,\ldots,x'_k$ such that
\[x_j=\begin{cases}
      x_j+\varepsilon & \mbox{if $j=i$,} \\
      x_j-\varepsilon & \mbox{if $j=i+1$, and} \\
      x_j & \mbox{otherwise.}
      \end{cases}\]
Observe that $x'_1\ge x'_2\ge\cdots\ge x'_k\ge 0$ and $x'_1+\cdots+x'_j\le S_j$ for every $j\in [k]$.
Since the sum of the squares of $x'_1,\ldots,x'_k$ is larger than the sum of the squares of $x_1,\ldots,x_k$,
we obtain that the $k$-tuple $x_1,\ldots,x_k$ does not maximize the sum of the squares subject
to $x_1\ge x_2\ge \cdots \ge x_k\ge 0$ and the $k$ inequalities listed in the statement of the lemma.
Hence, we conclude that $x_1+\cdots+x_i=S_i$ for every $i\in [k]$,
which implies that $x_i=S_i-S_{i-1}$ for every $i\in [k]$.
Since it holds that $S_i-S_{i-1}=s_i+\cdots+s_k$, the statement of the lemma now follows.
\end{proof}

We next bound the spectral radius of skew-symmetric matrices with entries between $-1$ and $+1$.
For $n\in\NN$, define $D_n$ to be the skew-symmetric matrix
with all entries above the diagonal equal to $+1$ and all entries below the diagonal equal to $-1$.
The next lemma asserts that the matrix $D_n$ has the largest possible spectral radius
among all skew-symmetric matrices with entries between $-1$ and~$+1$.

\begin{lemma}
\label{lm:antisym}
For every $n\in\NN$,
the spectral radius of a skew-symmetric matrix $A\in [-1,1]^{n\times n}$
is at most the spectral radius of $D_n$.
\end{lemma}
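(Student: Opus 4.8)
The plan is to bound the spectral radius of a skew-symmetric $A\in[-1,1]^{n\times n}$ by controlling the eigenvalues of $-A^2$, which is a symmetric positive semidefinite matrix, and then to identify the maximizer with $-D_n^2$ via the combinatorial inequalities of Lemma~\ref{lm:sumsq}. Since $A$ is skew-symmetric and real, its nonzero eigenvalues are purely imaginary and come in conjugate pairs $\pm\unit t_j$ with $t_j>0$; hence the spectral radius of $A$ equals $\sqrt{\mu_{\max}}$ where $\mu_{\max}$ is the largest eigenvalue of the symmetric matrix $-A^2=A^TA$. So it suffices to show that the largest eigenvalue of $A^TA$ is at most the largest eigenvalue of $D_n^TD_n$. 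Moreover the eigenvalues of $D_n$ are known classically (they are $\unit\cot\bigl((2j-1)\pi/2n\bigr)$ for appropriate $j$, or can be read off from the fact that $D_n$ is, up to conjugation, a circulant-like structure), and in the limit the largest one behaves like $\unit\cdot\tfrac{2n}{\pi}$, matching the $2/\pi$ constant appearing in the main theorem.

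First I would set up the variational characterization: the squared spectral radius of $A$ equals $\max_{\|v\|=1} \|Av\|^2 = \max_{\|v\|=1} \sum_i \bigl(\sum_j A_{ij}v_j\bigr)^2$. Fix a unit vector $v$ and, after permuting coordinates (a permutation conjugates $A$ into another skew-symmetric matrix in $[-1,1]^{n\times n}$ of the same spectral radius, and permutes the entries of $v$), assume $|v_1|\ge|v_2|\ge\cdots\ge|v_n|$; after flipping signs of coordinates (conjugation by a diagonal $\pm1$ matrix, again preserving skew-symmetry and the entry bounds) assume $v_1\ge v_2\ge\cdots\ge v_n\ge 0$. Now write $w_i=\sum_j A_{ij}v_j$ and observe, using $|A_{ij}|\le1$ and $A_{ii}=0$, that partial sums of the $w_i$'s (after reordering the $w_i$ in decreasing absolute value) obey exactly the majorization-type inequalities $\sum_{i\le m}|w_{(i)}|\le\sum_i \min\{i,m\} s_i$ with $s_i$ related to the gaps $v_i-v_{i+1}$; the key point is that $w_i$ is a signed sum of the $v_j$'s with coefficients in $[-1,1]$ and zero diagonal coefficient, so $\sum_{i=1}^m |w_{(i)}|$ is maximized when $A$ restricted to the relevant rows and columns looks like $D_n$. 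This is precisely the hypothesis of Lemma~\ref{lm:sumsq}, and applying it gives $\sum_i w_i^2\le \sum_i\bigl(\sum_{j\ge i}s_j\bigr)^2$, with the right-hand side being exactly $\|D_n v\|^2$ for the same sorted $v$.

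The main obstacle I anticipate is the bookkeeping in the previous step: verifying that the partial-sum inequalities required by Lemma~\ref{lm:sumsq} genuinely hold for an arbitrary skew-symmetric $A\in[-1,1]^{n\times n}$, once $v$ has been sorted. One has to be careful that the $w_i$ must themselves be reordered by absolute value (they need not be monotone even when $v$ is), that the diagonal being zero is what prevents the trivial bound $\sum|w_{(i)}|\le m\sum_j v_j$ from being attained and instead forces the sharper $\min\{i,m\}$ weights, and that the extremal configuration is consistent across all $m$ simultaneously so that Lemma~\ref{lm:sumsq} applies with a single choice of $s_i$. Once that combinatorial lemma is in hand, the conclusion $\|Av\|^2\le\|D_n v_{\mathrm{sort}}\|^2\le \max_{\|u\|=1}\|D_n u\|^2$ follows immediately by maximizing over $v$, which is the desired spectral radius bound. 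I would close by noting the equality case: by the equality clause of Lemma~\ref{lm:sumsq}, $\|Av\|=\|D_n v\|$ for the optimal $v$ forces the relevant entries of $A$ to agree with those of $D_n$ (up to the permutation and sign flips used), so $D_n$ is, up to these symmetries, the unique extremizer.
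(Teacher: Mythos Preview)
Your overall strategy---reduce to bounding $\|Av\|$ for unit $v$ via the variational characterization, then feed partial-sum inequalities into Lemma~\ref{lm:sumsq}---is exactly the paper's approach. However, several of the specific choices you make do not work as stated and leave a genuine gap.

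First, your identification of the $s_i$ with the gaps $v_i-v_{i+1}$ is wrong: with that choice $\sum_{j\ge i}s_j=v_i$, so the conclusion of Lemma~\ref{lm:sumsq} would read $\sum_i w_i^2\le\sum_i v_i^2=1$, not $\|D_nv\|^2$. The correct $s_i$ are the successive differences of the (sorted, nonnegative) entries of $D w$, not of $v$.

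Second, and more seriously, you sort $v$ and then reorder the entries of $Av$ by \emph{absolute value}. The cancellation that drives the partial-sum bound comes from skew-symmetry: for $I=[m]$ one has
\[
\sum_{i\in I}(Av)_i=\sum_{\substack{i<j\\ i,j\in I}}A_{ij}(v_j-v_i)+\sum_{i\in I}\sum_{j\notin I}A_{ij}v_j,
\]
and it is this sum (no absolute values, and with the contiguous index set $I=[m]$) that is bounded by $\sum_{i<j\le m}|v_j-v_i|+m\sum_{j>m}v_j$. If instead you take $\sum_{i\in I}|(Av)_i|=\sum_{i\in I}\epsilon_i(Av)_i$ for signs $\epsilon_i$ and an arbitrary $m$-set $I$ (the positions of the $m$ largest absolute values), the pairing terms become $A_{ij}(\epsilon_i v_j-\epsilon_j v_i)$ and no longer collapse to the hypothesis of Lemma~\ref{lm:sumsq}. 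You also attribute the sharpening over the trivial bound to ``zero diagonal''; it is skew-symmetry, not merely $A_{ii}=0$, that produces the $\min\{i,m\}$ weights.

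The paper fixes both problems by reversing the order of operations. After making $v$ positive (as you do), it permutes rows and columns of $A$ so that $(Av)_1\ge\cdots\ge(Av)_n$, splits at the index $k$ where $(Av)$ changes sign, and only then constructs $w$ as a permutation of $v$, sorted \emph{increasing} on $[1,k]$ and \emph{decreasing} on $[k+1,n]$. With this ordering the partial sums $\sum_{i\le m}(Av)_i$ for $m\le k$ involve no absolute values, the skew-symmetry cancellation applies directly with $I=[m]$, and one obtains $\sum_{i\le m}(Av)_i\le\sum_{i\le m}(Dw)_i$; Lemma~\ref{lm:sumsq} is then applied with $s_i=(Dw)_i-(Dw)_{i+1}$. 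The negative half is handled symmetrically. Your sketch has the right ingredients but sorts the wrong object and omits the positive/negative split, and without those two changes the argument does not close.
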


\begin{proof}
We fix $n$ and write $D$ for $D_n$ throughout the proof.
We establish that
for every vector $v\in\RR^n$ with $\|v\|=1$, there exists a vector $w$ that can be obtained from $v$
by permuting the entries of $v$ and changing their signs such that $\|Av\|\le\|Dw\|$.
This would imply that the spectral radius of $A$ does not exceed that of $D$.
First observe that it is enough to prove the inequality for vectors $v\in\RR^n$ with non-zero entries (if the statement fails
for a vector $v$, it also fails for any unit vector obtained by any small perturbation of $v$).
Next observe that if $A'$ is obtained by changing the signs of all entries in the $i$-th row and the $i$-th column and
$v'$ is obtained from $v$ by changing the sign of its $i$-th entry,
then $(Av)_i=-(A'v')_i$ and $(Av)_j=(A'v')_j$ for all $j\not=i$; in particular, $\|Av\|=\|A'v'\|$.
Hence, we will assume without loss of generality that all entries of $v$ are positive.

By permuting rows and columns of $A$ symmetrically and applying the same permutation to $v$,
we can assume that $(Av)_1\ge (Av)_2\ge \cdots \ge (Av)_n$.
Let $k$ be the largest index such that $(Av)_k\ge 0$, and
let $w$ be the vector obtained from $v$ by permuting its first $k$ entries and its remaining $n-k$ entries separately in a way that
$w_1\le w_2\le\dots\le w_k$ and $w_{k+1}\ge w_{k+2}\ge\dots\ge w_n$.
We will show that
\begin{equation}
\sum_{i=1}^k(Av)_i^2\le\sum_{i=1}^k(Dw)_i^2\quad\mbox{and}\quad\sum_{i=k+1}^n(Av)_i^2\le\sum_{i=k+1}^n(Dw)_i^2.\label{eq:antisym1}
\end{equation}
The arguments for the two cases are symmetric and so we focus on establishing that
\begin{equation}
\sum_{i=1}^k(Av)_i^2\le\sum_{i=1}^k(Dw)_i^2.\label{eq:antisym2}
\end{equation}
Observe that the following holds for every $m\in [k]$ (we use that $A_{ij}+A_{ji}=0$ for all $i,j\in [m]$):
\begin{align*}
\sum_{i=1}^m (Av)_i & = \sum_{i=1}^m\sum_{j=1}^n A_{ij}v_j \\
                    & \le \sum_{1\le i<j\le m}\max\{v_i-v_j,v_j-v_i\}+\sum_{i=1}^m\sum_{j=m+1}^n v_j \\
                    & = \sum_{1\le i<j\le m} \left(v_i+v_j-2\min\{v_i,v_j\}\right)+m\sum_{j=m+1}^n v_j \\
                    & = -\sum_{1\le i,j\le m} \min\{v_i,v_j\}+m\sum_{j=1}^n v_j \\
                    & \le -\sum_{1\le i,j\le m} \min\{w_i,w_j\}+m\sum_{j=1}^n w_j \\
		    & = \sum_{1\le i<j\le m} \left(w_j-w_i\right)+m\sum_{j=m+1}^n w_j \\
                    & = \sum_{i=1}^m\sum_{j=1}^n D_{ij}w_j = \sum_{i=1}^m (Dw)_i.
\end{align*}
Let $k'$ be the largest index such that $k'\le k$ and $(Dw)_{k'}>0$.
We choose $\varepsilon>0$ and apply Lemma~\ref{lm:sumsq} with the following parameters:
$x_i=(Av)_i$, $i\in [k]$, 
$s_i=(Dw)_i-(Dw)_{i+1}$ for $i\in [k'-1]$, $s_{k'}=(Dw)_{k'}$ and $s_i=\varepsilon$, $i\in [k]\setminus [k']$.
Observe that $x_1,\ldots,x_{k}$ and $s_1,\ldots,s_{k}$ satisfy the assumptions of Lemma~\ref{lm:sumsq}.
Hence,
Lemma~\ref{lm:sumsq} implies that
\[\sum_{i=1}^{k} x_i^2\le\sum_{i=1}^{k'}\left((Dw)_i+(k-k')\varepsilon\right)^2+(k-k')\varepsilon^2.\]
Since this inequality holds for every $\varepsilon>0$,
we obtain that
\[\sum_{i=1}^{k} x_i^2\le\sum_{i=1}^{k'}(Dw)_i^2.\]
This establishes \eqref{eq:antisym2}.
The other inequality in \eqref{eq:antisym1} can be proven analogously.
Hence, we conclude that $\|Av\|\le\|Dw\|$ as desired.
\end{proof}

We next use Lemma~\ref{lm:antisym} to bound elements of $\wsigma(W)$ of a regular tournamenton~$W$.

\begin{lemma}
\label{lm:ck4}
Let $W$ be a tournamenton.
If $1/2$ is contained in $\wsigma(W)$,
then each other element of $\wsigma(W)$ has absolute value at most $1/\pi$.
\end{lemma}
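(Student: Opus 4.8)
The plan is to move to a convergent sequence $(A_n)_{n\in\NN}$ of step approximations of $W$, which is strongly convergent by Corollary~\ref{cor:strongconv}, to pin down the spectrum of each $A_n$ using the decomposition already exploited in the proof of Proposition~\ref{prop:qrand}, and then to bound the imaginary part of the spectrum via Lemma~\ref{lm:antisym} together with an explicit computation of the eigenvalues of $D_n$.

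First, since $1/2\in\wsigma(W)$, Proposition~\ref{prop:reg} tells us that $W$ is regular, so $A_nj_n=j_n/2$, where $j_n$ is the all-$k_n^{-1/2}$ vector and $k_n$ is the order of $A_n$. Put $\JJ_n=\tfrac12 j_nj_n^T$ (the matrix with all entries $(2k_n)^{-1}$, as in the proof of Proposition~\ref{prop:qrand}) and $B_n=A_n-\JJ_n$; then $B_n$ is skew-symmetric with $B_nj_n=0$. Because $A_n$ is complementary, all entries of $A_n$ lie in $[0,1/k_n]$, hence all entries of $2k_nB_n$ lie in $[-1,1]$. Now $\JJ_n$ acts as $\tfrac12 I$ on $\mathrm{span}(j_n)$ and as $0$ on $j_n^\perp$, whereas $B_n$ acts as $0$ on $\mathrm{span}(j_n)$ and maps $j_n^\perp$ into itself (as $\langle B_nv,j_n\rangle=-\langle v,B_nj_n\rangle=0$ for $v\perp j_n$); thus $A_n$ splits as the orthogonal direct sum of $\tfrac12 I$ on $\mathrm{span}(j_n)$ and $B_n$ restricted to $j_n^\perp$. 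Consequently the eigenvalues of $A_n$ are $1/2$ — a \emph{simple} eigenvalue, because the eigenvalues of the real skew-symmetric operator $B_n|_{j_n^\perp}$ are purely imaginary and cannot equal $1/2$ — together with those purely imaginary eigenvalues, each of absolute value at most the spectral radius of $B_n$.

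It remains to bound the spectral radius of $B_n$. By Lemma~\ref{lm:antisym}, the spectral radius of $2k_nB_n$ is at most that of $D_{k_n}$, so I need the spectrum of $D_n$. I would compute it by rewriting the eigenvector equation $D_nv=\lambda v$ in terms of the partial sums $T_i=v_1+\cdots+v_i$: this becomes a first-order linear recurrence for the $T_i$, and solving it with $T_0=0$ and imposing the consistency condition at $i=n$ (with the total sum nonzero) forces $\left(\frac{\lambda-1}{\lambda+1}\right)^{n}=-1$, whence $\lambda=\unit\cot\frac{(2k-1)\pi}{2n}$ for $k\in[n]$. These $n$ numbers are distinct, so they are exactly the eigenvalues of $D_n$, and the spectral radius of $D_n$ equals $\cot\frac{\pi}{2n}$, which is strictly smaller than $\frac{2n}{\pi}$ because $\tan x>x$ on $(0,\pi/2)$. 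Hence the spectral radius of $B_n$ is at most $\frac1{2k_n}\cot\frac{\pi}{2k_n}<\frac1\pi$, so every eigenvalue of $A_n$ other than the simple eigenvalue $1/2$ has absolute value strictly less than $1/\pi$.

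Finally, I would pass to the limit. Since $1/\pi<1/2$, the eigenvalue $1/2$ of each $A_n$ is isolated from the rest of the spectrum by a gap larger than $\tfrac12-\tfrac1\pi$; as $(A_n)$ is strongly convergent and $\wsigma(W)$ is the limit multiset with $0$ removed, it follows that $1/2$ occurs in $\wsigma(W)$ with multiplicity exactly one, and every other element of $\wsigma(W)$ — having finite multiplicity by Proposition~\ref{prop:limitset} and being a limit of eigenvalues of the $A_n$ of absolute value less than $1/\pi$ — has absolute value at most $1/\pi$. The step I expect to require the most care is the spectral computation for $D_n$: extracting $\lambda=\unit\cot\frac{(2k-1)\pi}{2n}$ cleanly from the partial-sum recurrence and its boundary conditions, and then the elementary but essential estimate $\cot\frac{\pi}{2n}<\frac{2n}{\pi}$, which is precisely what makes the final bound equal to $1/\pi$ rather than something larger.
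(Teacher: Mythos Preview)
Your proof is correct and follows essentially the same approach as the paper: pass to step approximations, use regularity (via Proposition~\ref{prop:reg}) to split off the all-ones direction, observe that the remaining skew-symmetric part carries all the other eigenvalues, and bound its spectral radius via Lemma~\ref{lm:antisym} applied to $D_{k_n}$. Your normalisation $B_n=A_n-\JJ_n$ differs from the paper's $J_{k_n}-2k_nA_n$ only by a sign and a factor of $2k_n$. The one substantive addition is that you actually compute the spectrum of $D_n$ (obtaining $\lambda=\unit\cot\frac{(2k-1)\pi}{2n}$ and hence the sharp non-asymptotic bound $\cot\frac{\pi}{2n}<\frac{2n}{\pi}$), whereas the paper simply asserts that the spectral radii of $D_{k_n}/k_n$ converge to $2/\pi$; your version therefore gives a strict bound $<1/\pi$ at every finite stage and makes the passage to the limit (and the simplicity of $1/2$) a little more explicit.
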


\begin{proof}
Let $A_n$, $n\in\NN$, be a convergent sequence of step approximations of $W$, and
let $k_n$ be the order of $A_n$.
Since $W$ is regular by Proposition~\ref{prop:reg},
the sum of each row of $A_n$ is $1/2$.
This yields that $1/2$ is an eigenvalue of $A_n$ and the associated eigenvector is $(1,\ldots,1)$.

Let $J_{k_n}$ be the square matrix of order $k_n$ with all entries equal to $1$ and
let $B_n=J_{k_n}-2k_n\cdot A_n$.
Note that the matrix $B_n$ is skew-symmetric and all its entries are between $-1$ and $+1$.
Also note that the vector $(1,\ldots,1)$ is an eigenvector of $A_n$ associated with the eigenvalue $1/2$, and
it is also an eigenvector of $B_n$ associated with the eigenvalue $0$.
Since $B_n$ is skew-symmetric,
all its non-zero eigenvalues are purely imaginary and
there exists an orthonormal basis of~$\CC^{k_n}$ formed by eigenvectors of the matrix $B_n$
(see the beginning of Section~\ref{sec:c8} for a more detailed exposition on properties of skew-symmetric matrices).
This means that we can assume that every eigenvector of $B_n$ associated with a non-zero eigenvalue
is orthogonal to the vector $(1,\ldots,1)$ in the space $\CC^{k_n}$.

Since $(1,\ldots,1)$ is an eigenvector of $B_n$ associated with the eigenvalue $0$,
every eigenvector of~$B_n$ associated with a non-zero eigenvalue of $B_n$ is also an eigenvector of $A_n$.
Thus, if $\lambda$ is an eigenvalue of $A_n$,
then either $\lambda$ is equal to $1/2$ (for the vector $(1,\ldots,1)$) or $-2k_n\lambda$ is an eigenvalue of~$B_n$.
Since the spectral radius of $B_n$ is at most the spectral radius of the matrix~$D_{k_n}$ by Lemma~\ref{lm:antisym} and
the spectral radiuses of the matrices $D_{k_n}$ divided by $k_n$ converge to $2/\pi$,
it follows that the limit of the maximum absolute value of an eigenvalue of $A_n$ different from $1/2$ is at most $1/\pi$.
The statement of the lemma now follows.
\end{proof}

We are now ready to asymptotically determine $c(\ell)$ for $\ell$ divisible by four.
Since the multiset $\wsigma(W_C)$ for the carousel tournamenton $W_C$,
which we described at the end of Section~\ref{sec:prelim},
consists of $1/2$ and $\pm\unit/((2i-1)\pi)$ for $i\in\NN$,
we obtain using \eqref{eq:lambda} that
\[c(\ell)\ge 1+2\cdot\sum_{i=1}^{\infty}\left(\frac{2}{(2i-1)\pi}\right)^{\ell}\]
for every $\ell$ divisible by four.
The next theorem of this section provides an asymptotically matching upper bound.

\begin{theorem}
\label{thm:ck4}
For every $\varepsilon>0$, there exists $\ell_0$ such that
the following holds for every $\ell\ge\ell_0$ divisible by four:
\[c(\ell)\le 1+\left(\frac{2}{\pi}+\varepsilon\right)^{\ell}.\]
\end{theorem}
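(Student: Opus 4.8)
The plan is to combine the spectral identity $C(W,\ell)=2^{\ell}\sum_{\lambda\in\wsigma(W)}\lambda^{\ell}$ of~\eqref{eq:lambda} with a strengthening of Lemma~\ref{lm:ck4} valid for \emph{every} tournamenton: every $\lambda\in\wsigma(W)$ satisfies $0\le\Real\lambda\le1/2$ and $|\Imag\lambda|\le1/\pi$. To see this, take a convergent sequence $(A_n)_{n\in\NN}$ of step approximations of $W$ of orders $k_n$. Since $A_n$ is complementary, $A_n+A_n^{T}=k_n^{-1}J_n$ where $J_n$ is the all-ones matrix, so $A_n=\tfrac12 B_n+\tfrac1{2k_n}J_n$ with $B_n:=A_n-A_n^{T}$ skew-symmetric and $k_nB_n\in[-1,1]^{k_n\times k_n}$. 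For a unit eigenvector $v\in\CC^{k_n}$ of $A_n$ with eigenvalue $\mu$, the form $\langle J_nv,v\rangle=\bigl|\sum_i v_i\bigr|^2$ is real and lies in $[0,k_n]$, while $\langle B_nv,v\rangle$ is purely imaginary because $B_n$ is real and skew-symmetric; hence $\Real\mu\in[0,1/2]$ and $|\Imag\mu|=\tfrac12|\langle B_nv,v\rangle|\le\tfrac12\|B_n\|_{\mathrm{op}}$. A real skew-symmetric matrix is normal, so $\|B_n\|_{\mathrm{op}}$ equals its spectral radius, which by Lemma~\ref{lm:antisym} applied to $k_nB_n$ is at most $k_n^{-1}$ times the spectral radius of $D_{k_n}$; since the latter quantity tends to $2/\pi$, we get $|\Imag\mu|\le\tfrac1\pi+o(1)$, and passing to the limit multiset (every nonzero element of $\wsigma(W)$ has finite multiplicity, hence is a genuine limit of eigenvalues of the $A_n$) gives $|\Imag\lambda|\le1/\pi$ for all $\lambda\in\wsigma(W)$.

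Granting this, fix $\varepsilon>0$, choose $c\in(0,1)$ with $\bigl(\pi\sqrt{1-c^{2}}\bigr)^{-1}\le\tfrac1\pi+\tfrac{\varepsilon}{4}$, and split $\wsigma(W)=X^{+}\cup X^{-}$ with $X^{+}=\{\lambda:\Real\lambda\ge c|\lambda|\}$. Since non-real elements of $\wsigma(W)$ come in conjugate pairs of equal multiplicity, $\sum_{\lambda}\lambda^{\ell}=\sum_{\lambda}\Real\lambda^{\ell}\le\sum_{\lambda}|\lambda|^{\ell}$, so it suffices to bound $2^{\ell}\sum_{X^{+}}|\lambda|^{\ell}$ and $2^{\ell}\sum_{X^{-}}|\lambda|^{\ell}$ separately. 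Let $\rho=\max_{\lambda\in\wsigma(W)}|\lambda|$, which by Proposition~\ref{prop:limitset} is a positive real eigenvalue with $\rho\le1/2$, so $\rho\in X^{+}$. Removing a single copy of $\rho$, every remaining $\lambda\in X^{+}$ satisfies $|\lambda|\le\rho$ and $|\lambda|\le c^{-1}\Real\lambda$, hence $|\lambda|^{\ell}\le c^{-1}\rho^{\ell-1}\Real\lambda$; summing and using that the real parts of the elements of $\wsigma(W)$ are nonnegative and sum to at most $1/2$ gives $\sum_{X^{+}}|\lambda|^{\ell}\le\rho^{\ell}+c^{-1}\rho^{\ell-1}\bigl(\tfrac12-\rho\bigr)=:\phi(\rho)$, and a short computation shows $\phi(1/2)=(1/2)^{\ell}$ and that $\phi$ is nondecreasing on $[0,1/2]$ once $\ell\ge1/c$, so $2^{\ell}\sum_{X^{+}}|\lambda|^{\ell}\le1$; this yields the constant term. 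For $X^{-}$, the inequalities $\Real\lambda<c|\lambda|$ and $|\Imag\lambda|\le1/\pi$ force $(1-c^{2})|\lambda|^{2}<(\Imag\lambda)^{2}\le1/\pi^{2}$, hence $|\lambda|\le\bigl(\pi\sqrt{1-c^{2}}\bigr)^{-1}\le\tfrac1\pi+\tfrac{\varepsilon}{4}=:\tau$, so $\sum_{X^{-}}|\lambda|^{\ell}\le\tau^{\ell-2}\sum_{\lambda\in\wsigma(W)}|\lambda|^{2}\le\tau^{\ell-2}$, where the uniform bound $\sum_{\lambda}|\lambda|^{2}\le1$ is readily obtained from the estimates in the proof of Proposition~\ref{prop:limitset}. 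Therefore $2^{\ell}\sum_{X^{-}}|\lambda|^{\ell}\le\pi^{2}(2/\pi+\varepsilon/2)^{\ell}$, which is at most $(2/\pi+\varepsilon)^{\ell}$ for all $\ell$ beyond some $\ell_{0}(\varepsilon)$; enlarging $\ell_{0}$ so that also $\ell_{0}\ge1/c$ and adding the two bounds gives $C(W,\ell)\le1+(2/\pi+\varepsilon)^{\ell}$ for every tournamenton $W$ and every $\ell\ge\ell_{0}$, hence $c(\ell)\le1+(2/\pi+\varepsilon)^{\ell}$.

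I expect the real obstacle to be the first step, i.e.\ upgrading $|\Imag\lambda|\le1/\pi$ from regular tournamentons (where it follows quickly from Lemma~\ref{lm:ck4}) to arbitrary ones: in Lemma~\ref{lm:ck4} regularity makes $(1,\dots,1)$ a common eigenvector of $J_n$ and $B_n$, so the spectra of $A_n$ and $B_n$ align directly, whereas in general one must route the estimate through the quadratic form $\langle A_nv,v\rangle$ and be careful that normality of $B_n$ genuinely converts the spectral-radius bound of Lemma~\ref{lm:antisym} into the operator-norm bound used above. (Interestingly, divisibility of $\ell$ by four is not needed in this argument; it only enters in making the bound tight, cf.\ the carousel tournamenton and Conjecture~\ref{conj:div4}.) The remaining care is purely the bookkeeping that absorbs the constant $\pi^{2}$ into the exponential base, which is why the slack $\varepsilon/2$ appears and the choice of $\ell_{0}$ is made last.
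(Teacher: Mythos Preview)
Your proof is correct and takes a genuinely different route from the paper's.

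The paper argues by contradiction and compactness: it picks extremal tournamentons $W_{k}$ for each $\ell=4k$, extracts a subsequence convergent in cut distance to some $W$, uses multiset convergence of spectra to deduce that either the top eigenvalue of $W$ is below $1/2$ (forcing $C(W_{k_i},4k_i)\to0$) or $1/2\in\wsigma(W)$; in the latter case Lemma~\ref{lm:ck4} (which needs regularity) bounds all other elements of $\wsigma(W)$ by $1/\pi$, and a finiteness argument near $1/\pi$ finishes.

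You instead prove, for \emph{every} tournamenton, the uniform rectangle bound $0\le\Real\lambda\le\tfrac12$, $|\Imag\lambda|\le\tfrac1\pi$ on $\wsigma(W)$, via the numerical range: writing $A_n=\tfrac{1}{2k_n}J_n+\tfrac12B_n$ with $B_n$ real skew-symmetric and reading off $\Real\mu=\tfrac{1}{2k_n}v^{*}J_nv$, $\Imag\mu=\tfrac{1}{2i}v^{*}B_nv$, then invoking normality of $B_n$ to convert Lemma~\ref{lm:antisym} into an operator-norm bound. This is the key new step, and it is sound; it strictly extends Lemma~\ref{lm:ck4}, which only yields the conclusion when $W$ is regular because there the spectra of $A_n$ and $B_n$ align through the common eigenvector $j$. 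Once the rectangle bound is in hand, your explicit splitting $X^{+}\cup X^{-}$ and the calculus check that $\phi(\rho)=\rho^{\ell}+c^{-1}\rho^{\ell-1}(\tfrac12-\rho)$ is nondecreasing on $[0,\tfrac12]$ for $\ell\ge1/c$ give the bound directly.

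What each buys: the paper's compactness argument is shorter given Lemma~\ref{lm:ck4} as stated, and its structural step (the limiting extremal tournamenton is regular) is informative in its own right; but it is nonconstructive in $\ell_0$. Your argument is self-contained, gives an effective $\ell_0(\varepsilon)$ (roughly $\max\{1/c,\,2\log\pi/\log\frac{2/\pi+\varepsilon}{2/\pi+\varepsilon/2}\}$ with $c$ chosen from $\varepsilon$), and as you note, it does not use divisibility by four at any point, so it actually yields $c(\ell)\le 1+(2/\pi+\varepsilon)^{\ell}$ for all sufficiently large $\ell$.
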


\begin{proof}
Let $W_k$ be a tournamenton that maximizes the sum of the $(4k)$-th powers of $\wsigma(W_k)$.
Suppose that the statement of the theorem is false,
i.e., there exists $\varepsilon>0$ and a sequence $(k_i)_{i\in\NN}$ such that
\begin{equation}\label{eq:ck4c}
C(W_{k_i},4k_i)>1+\left(\frac{2}{\pi}+\varepsilon\right)^{4k_i}
\end{equation}
for every $i\in\NN$.
Without loss of generality,
we may assume that the sequence $(W_{k_i})_{i\in\NN}$ is convergent in the cut distance and
let $W$ be the tournamenton that is its limit.
Since partitions of $[0,1]$ corresponding to fine enough step approximations of $W_{k_i}$
yield step approximations close to $W$ in the cut distance and
the step approximations of $W_{k_i}$ and $W$ are also close in the cut distance,
the sets $\left(\wsigma(W_{k_i})\cup\{0^\infty\}\right)_{i\in\NN}$
converge to $\wsigma(W)\cup\{0^\infty\}$ as multisets.

Let $\rho$ be the largest positive real contained in $\wsigma(W)$.
If $\rho$ is smaller than $1/2$, then it holds that
\[\lim_{i\to\infty}C(W_{k_i},4k_i)=0.\]
Since this contradict the choice of $(k_i)_{i\in\NN}$,
we can assume that $\rho=1/2$, i.e., $\wsigma(W)$ contains $1/2$.
Hence, 
all other non-zero elements of $\wsigma(W)$ are purely imaginary and
the absolute value of every such element at most $1/\pi$ by Lemma~\ref{lm:ck4}.
Let $m$ be the number elements of $\wsigma(W)$ with the absolute value $1/\pi$ (note that $m$ can be zero).
We can assume that $\varepsilon$ is small enough that
the absolute value of any element of $\wsigma(W)$ with absolute value smaller than $1/\pi$ is at most $1/\pi-\varepsilon$.
It follows that
\begin{equation}
\lim_{i\to\infty}\frac{C(W_{k_i},4k_i)-2^{4k_i}\sum_{\lambda\in\wsigma(W_{k_i}),|\lambda|\ge 1/\pi-\varepsilon/2}\lambda^{4k_i}}{(2/\pi)^{4k_i}}=0.\label{eq:ck4a}
\end{equation}
The choice of $m$ implies that there exists $i_0$ such that
\begin{equation}
\sum_{\lambda\in\wsigma(W_{k_i}),|\lambda|\ge 1/\pi-\varepsilon/2}\lambda^{4k_i}\le\frac{1}{2^{4k_i}}+m\left(\frac{1}{\pi}+\frac{\varepsilon}{4}\right)^{4k_i}\label{eq:ck4b}
\end{equation}
for every $i\ge i_0$.
Using \eqref{eq:ck4c}, \eqref{eq:ck4a} and \eqref{eq:ck4b} we get a contradiction.
\end{proof}

\section{Cycles of length eight}
\label{sec:c8}

In this section, we present our results on cycles of length eight.
We also present the analogous arguments in the (simpler) case of cycles of length four to make the exposition more accessible,
although the presented results for cycles of length four have been previously proven.
To be able to present our arguments, we need to recall some results on matrices and particularly on skew-symmetric matrices. 
If $A$ is a square matrix of order $n$ and $X\subseteq [n]$,
then $A[X]$ is the square matrix formed by entries in the rows and the columns indexed by the elements of $X$.
Throughout this section, $J_n$ denotes the square matrix of order $n$ with all entries equal to $1$.
Recall that $D_n$ is the skew-symmetric $n\times n$ matrix with all entries above the diagonal equal to $+1$ and all entries below the diagonal equal to $-1$.
We say that two skew-symmetric matrices are \emph{sign-equivalent}
if one can be obtained from the other by permuting the rows and columns symmetrically and
multiplying some of the rows and the symmetric set of columns by $-1$.

It is well-known that for every real skew-symmetric matrix~$A$,
there exists an orthogonal (real) matrix $Q$
(i.e. a square matrix $Q$ such that $Q^TQ$ is the identity matrix) such that
the matrix $Q^TAQ$ is a block diagonal matrix with two kinds of blocks:
blocks of size two of the form $\begin{pmatrix} 0 & a \\ -a & 0 \end{pmatrix}$ and
blocks of size one equal to the zero matrix.
In particular, there exists an orthogonal basis formed by eigenvectors of the matrix $A^2$ and
the values $-a^2$ from the blocks of the matrix $Q^TAQ$ are the eigenvalues of $A^2$ (each with multiplicity two).
In addition, if $v$ and $v'$ are two rows of $Q$ associated with the single block of $Q^TAQ$ with a value $a$,
i.e., $av=Av'$ and $-av'=Av$, the complex vectors $v+\unit v'$ and $v-\unit v'$
are eigenvectors of $A$ associated with the eigenvalues $a\unit$ and $-a\unit$, respectively, and
the vectors $v+\unit v'$ and $v-\unit v'$ are orthogonal in the space $\CC^n$.

We apply the just reviewed results on skew-symmetric matrices to get the following upper bound
the trace of the fourth and eight powers of the sum of the all-one matrix and a skew-symmetric matrix.

\begin{lemma}
\label{lm:midterms}
Let $B$ be a skew-symmetric matrix of order $n$ with entries between $-1$ and $+1$.
It holds that
\begin{align*}
\Trace (J_n+B)^4 &=\Trace J_n^4+\Trace B^4-4n||Bj||^2 \quad\mbox{and}\\
\Trace (J_n+B)^8 &\le\Trace J_n^8+\Trace B^8-2n^5||Bj||^2,
\end{align*}
where $j$ is the vector with all entries equal to one.
In particular, it holds that
\[\Trace (J_n+B)^4\le\Trace J_n^4+\Trace B^4\quad\mbox{and}\quad \Trace (J_n+B)^8\le\Trace J_n^8+\Trace B^8,\]
and equality holds if and only if the sum of each row of $B$ is zero.
\end{lemma}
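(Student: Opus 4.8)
The plan is to expand $(J_n+B)^\ell$ for $\ell\in\{4,8\}$ and control the cross terms using the structure of skew-symmetric matrices recalled above. Write $J=J_n$, $j$ for the all-ones vector, so $J=jj^T$ and $Jj=nj$; note $\Trace J^\ell=n^\ell$. The key observations are that $\Trace(J^aB^b\cdots)$ factors through $j$: any product of $J$'s and $B$'s that contains at least one $J$ has trace equal to a scalar times $j^T(\text{stuff})j$, and since $B^Tj=-Bj$, powers of $B$ acting on $j$ telescope. Concretely, $J B^k J = (j^T B^k j)\,J$, and $j^T B^{2m+1} j = 0$ because $B^{2m+1}$ is skew-symmetric, while $j^T B^{2m} j = (-1)^m\|B^m j\|^2$. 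I would first do the $\ell=4$ case as a warm-up: expanding $(J+B)^4$, group the sixteen terms by the number of $B$-factors. The zero-$B$ term is $\Trace J^4=n^4$; the four-$B$ term is $\Trace B^4$; terms with an odd number of $B$'s vanish after taking the trace (each is a scalar multiple of $j^TB^{\text{odd}}j=0$ or of $\Trace(JB)=j^TBj=0$); and the two-$B$ terms come in two cyclic types, $\Trace(J^2B^2)$-type and $\Trace(JBJB)$-type, which evaluate to $n^2\,j^TB^2j\cdot(1/n)$-type expressions — carefully, $\Trace(J^2B^2)=\Trace(nJB^2)=n\,j^TB^2j=-n\|Bj\|^2$, and $\Trace(JBJB)=(j^TBj)^2=0$. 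Counting multiplicities (there are four cyclic rotations of $JJBB$ and two of $JBJB$) gives the coefficient $-4n$ on $\|Bj\|^2$, yielding the stated identity for $\ell=4$.

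For $\ell=8$ the same philosophy applies but the bookkeeping is heavier, and this is where I expect the main obstacle to lie: there are many cyclic words in $\{J,B\}$ of length $8$, and I must show that after (i) discarding words with an odd number of $B$'s, (ii) isolating the pure $J^8$ and pure $B^8$ contributions, every remaining word contributes something $\le 0$, with the aggregate of the words that actually involve $\|Bj\|^2$ summing to at most $-2n^5\|Bj\|^2$. The clean way to organize this is to use the block-diagonal form: pick an orthonormal basis in which $B$ is block-diagonal with $2\times 2$ blocks $\left(\begin{smallmatrix}0&a_t\\-a_t&0\end{smallmatrix}\right)$ and zero blocks, and decompose $j=\sum_t c_t$ into its components $c_t$ in these invariant $2$-planes (plus a kernel part). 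Then $\|Bj\|^2=\sum_t a_t^2\|c_t\|^2$ and every mixed trace $\Trace(J^{a_1}B^{b_1}\cdots)$ becomes a sum over blocks of elementary trigonometric-type quantities in the $a_t$'s and $\|c_t\|^2$'s; one shows term-by-term (or block-by-block) that the total mixed contribution is bounded above by $-2n^5\sum_t a_t^2\|c_t\|^2 = -2n^5\|Bj\|^2$. A lower-tech alternative, which I would try first, is a direct telescoping computation: since $J=jj^T$, each word with $r\ge 1$ copies of $J$ collapses to a product of inner products $j^TB^{d_1}j\cdots j^TB^{d_r}j$ with $\sum d_i = 8-r$, times $n^{r-1}$; the odd-$d_i$ factors kill most terms, and only partitions of $8-r$ into even parts survive, each contributing a signed product of $\|B^{d_i/2}j\|^2$'s. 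Using $\|B^2j\|^2 = \|B(Bj)\|^2 \le \|B\|_{\mathrm{op}}^2\|Bj\|^2$ and $\|B\|_{\mathrm{op}}^2 \le \|B\|_F^2 \le n^2$ (entries in $[-1,1]$), one bounds the higher-power terms so that the whole mixed sum is dominated by the single leading term $-2n^5\|Bj\|^2$.

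Once either computation is complete, the two "In particular" inequalities are immediate: $\|Bj\|^2\ge 0$ forces $\Trace(J+B)^\ell\le\Trace J^\ell+\Trace B^\ell$ for $\ell\in\{4,8\}$. For the equality characterization, note $\|Bj\|^2=0$ iff $Bj=0$ iff every row sum of $B$ is zero; conversely if a row sum is nonzero then $\|Bj\|^2>0$ and, from the displayed identities, $\Trace(J+B)^4 < \Trace J^4 + \Trace B^4$ strictly (and likewise for $\ell=8$, provided the $\ell=8$ bound is genuinely strict when $\|Bj\|^2>0$, which it is since the coefficient $-2n^5$ is strictly negative and the discarded remaining terms are $\le 0$). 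I would close by remarking that the $\ell=4$ identity is exact, so no loss occurs there, whereas for $\ell=8$ only the inequality is needed and claimed.
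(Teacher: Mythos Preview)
Your treatment of $\ell=4$ and your approach (a) for $\ell=8$ (block-diagonalizing $B$ and writing everything in terms of the $a_t$ and the components $\|c_t\|^2$ of $j$ in the invariant $2$-planes) are exactly what the paper does; the paper's angles $\alpha_i$ are your $\|c_t\|^2$ up to the normalization $\|c_t\|^2=n\cos^2\alpha_t$. So that route is correct and coincides with the paper's.

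The ``lower-tech'' alternative (b), however, has a genuine gap. After collapsing, the mixed part of $\Trace(J+B)^8$ equals
\[
-8n^5\|Bj\|^2+8n^3\|B^2j\|^2+12n^2\|Bj\|^4-8n\|B^3j\|^2-8\|Bj\|^2\|B^2j\|^2,
\]
and you need this to be at most $-2n^5\|Bj\|^2$, i.e.
\[
8n^3\|B^2j\|^2+12n^2\|Bj\|^4\le 6n^5\|Bj\|^2+8n\|B^3j\|^2+8\|Bj\|^2\|B^2j\|^2.
\]
Using only $\|B\|_{\mathrm{op}}^2\le n^2$ (or even the sharper $\|B\|_{\mathrm{op}}^2\le n^2/2$ coming from $\|B\|_F^2\le n^2$) gives $8n^3\|B^2j\|^2\le 4n^5\|Bj\|^2$ and $12n^2\|Bj\|^4\le 6n^5\|Bj\|^2$, so the left side is only bounded by $10n^5\|Bj\|^2$, which overshoots the budget; the negative terms on the right can be arbitrarily small relative to $\|Bj\|^2$, so crude operator-norm bounds cannot close the gap. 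What actually makes the argument work is the completed-square structure: for instance, with $v=Bj$,
\[
-2n^5\|Bj\|^2+8n^3\|B^2j\|^2-8n\|B^3j\|^2=-2n\,v^T(n^2I+2B^2)^2v\le 0,
\]
and the paper handles the remaining pieces by two more such groupings, each also exploiting $\sum\cos^2\alpha_i\le 1$ and $\sum a_i^2\le n^2/2$. In other words, approach (a) is not optional---the spectral decomposition (or an equivalent matrix-square identity) is needed to see the cancellations, and the purely norm-based estimate you sketched in (b) is too lossy. A minor bookkeeping point: in your collapse formula the power of $n$ should be $n^{r-s}$, where $s$ is the number of maximal $J$-runs in the cyclic word, not $n^{r-1}$.
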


\begin{proof}
We start with the trace of the $4$-th power of $J_n+B$.
We obtain the following by expanding $(J_n+B)^4$ and using that $\Trace XY=\Trace YX$:
\[\Trace (J_n+B)^4=\Trace J_n^4+4\Trace J_n^3B+4\Trace J_n^2B^2+2\Trace J_nBJ_nB+4\Trace J_nB^3+\Trace B^4.\]
Since $B$ is skew-symmetric, any odd power of $B$ is also skew-symmetric.
In particular, $J_nB^{2k-1}J_n$ is the zero matrix and $\Trace J_n B^{2k-1}=0$ for every $k\in\NN$.
It follows that
\begin{equation}
\Trace (J_n+B)^4=\Trace J_n^4+4\Trace J_n^2B^2+\Trace B^4.\label{eq:mid1}
\end{equation}
Let $Q$ be the orthogonal matrix such that $Q^TBQ$ has the block structure described before the statement of this lemma,
let $k$ be the number of blocks of size two, and
let $a_1,\ldots,a_k$ be the (non-zero) numbers associated with these blocks.
Since the trace of $B^2$ is equal to $2(a_1^2+\cdots+a_k^2)$, it follows that 
\begin{equation}
a_1^2+\cdots+a_k^2\le \frac{n^2}{2}.\label{eq:mida}
\end{equation}
Further, let $q_i$ and $q'_i$ be the two rows of $Q$ corresponding to the block with $a_i$, $i\in [k]$, and
let $\alpha_i\in [0,\pi/2]$ be the angle between the vector $j$ and the plane generated by $q_i$ and $q'_i$, $i\in [k]$.
Note that the $(n-2k)$-dimensional subspace orthogonal to the space generated by $q_1,\ldots,q_k$ and $q'_1,\ldots,q'_k$
is the kernel of $B$ (as it is generated by the rows of $Q$ corresponding to the blocks of size one).
Since the rows of $Q$ form an orthogonal basis, it follows that
\begin{equation}
\sum_{i=1}^k\cos^2\alpha_i\le 1.\label{eq:mid2}
\end{equation}
Observe that the following identities hold.
\begin{align*}
\Trace J_n^4 & = n^4 \\
\Trace J_n^2B^2 & = -n^2\sum_{i=1}^k a_i^2\cos^2\alpha_i \\
\Trace B^4 & = 2\sum_{i=1}^k a_i^4
\end{align*}
In particular, the second term in \eqref{eq:mid1} is non-positive and equal to $-4n||Bj||^2$.
Hence, $\Trace (J_n+B)^4\le \Trace J_n^4+\Trace B^4$ and
equality holds if and only if the vector $j$ is in the kernel of $B$.
The latter holds if and only if the sum of each row of $B$ is zero.
This establishes the statement of the lemma concerning the trace of the $4$-th power of $J_n+B$.

We next analyze the trace of the $8$-th power of $J_n+B$.
As in the case of the $4$-th power the trace of some terms in the expansion of $(J_n+B)^8$ is zero, and
we obtain the following.
\begin{align}
\Trace (J_n+B)^8 & = \Trace J_n^8+8\Trace J_n^6B^2+8\Trace J_n^4B^4+8\Trace J_n^3B^2J_nB^2\nonumber\\
                 & + 4\Trace J_n^2B^2J_n^2B^2+8\Trace J_n^2B^6+8\Trace J_nB^2J_nB^4+\Trace B^8\label{eq:mid3}
\end{align}
As in the previous case, we can express some of the terms in \eqref{eq:mid3} using $a_i$ and $\alpha_i$, $i\in [k]$.
\begin{align*}
\Trace J_n^8 & = n^8 \\
\Trace J_n^6B^2 & = -n^6\sum_{i=1}^k a_i^2\cos^2\alpha_i \\
\Trace J_n^4B^4 & = n^4\sum_{i=1}^k a_i^4\cos^2\alpha_i \\
\Trace J_n^3B^2J_nB^2 & = \Trace J_n^2B^2J_n^2B^2 = n^4\left(\sum_{i=1}^k a_i^2\cos^2\alpha_i\right)^2 \\
\Trace J_n^2B^6 & = -n^2\sum_{i=1}^k a_i^6\cos^2\alpha_i \\
\Trace J_nB^2J_nB^4 & = -n^2\left(\sum_{i=1}^k a_i^2\cos^2\alpha_i\right)\left(\sum_{i=1}^k a_i^4\cos^2\alpha_i\right)\\
\Trace B^8 & = 2\sum_{i=1}^k a_i^8
\end{align*}
We derive using \eqref{eq:mid2} that
\begin{align}
&2\Trace J_n^6B^2+8\Trace J_n^3B^2J_nB^2+8\Trace J_nB^2J_nB^4\nonumber\\
&=-2n^2\left(\sum_{i=1}^k a_i^2\cos^2\alpha_i\right)\left(n^4-4n^2\sum_{i=1}^k a_i^2\cos^2\alpha_i+4\sum_{i=1}^k a_i^4\cos^2\alpha_i\right)\nonumber\\
&\le-2n^2\left(\sum_{i=1}^k a_i^2\cos^2\alpha_i\right)\left(n^4\sum_{i=1}^k\cos^2\alpha_i-4n^2\sum_{i=1}^k a_i^2\cos^2\alpha_i+4\sum_{i=1}^k a_i^4\cos^2\alpha_i\right)\nonumber\\
&=-2n^2\left(\sum_{i=1}^k a_i^2\cos^2\alpha_i\right)\left(\sum_{i=1}^k\left(n^2-2a_i^2\right)^2\cos^2\alpha_i\right)\le 0.\label{eq:mid4+}
\end{align}
Using \eqref{eq:mida} and \eqref{eq:mid2}, we obtain that
\[\sum_{i=1}^k a_i^2\cos^2\alpha_i\le\frac{n^2}{2},\]
which yields that
\begin{align}
2\Trace J_n^6B^2+4\Trace J_n^2B^2J_n^2B^2&=\nonumber\\
2n^4\left(\sum_{i=1}^k a_i^2\cos^2\alpha_i\right)\left(-n^2+2\sum_{i=1}^k a_i^2\cos^2\alpha_i\right)&\le 0.\label{eq:mid4}
\end{align}
We finally bound a portion of the second term, and the third and sixth terms in \eqref{eq:mid3} as follows.
\begin{align}
2\Trace J_n^6B^2+8\Trace J_n^4B^4+8\Trace J_n^2B^6&=\nonumber\\
-2n^2\sum_{i=1}^k\left(n^4-4n^2a_i^2+4a_i^4\right)a_i^2\cos^2\alpha_i&=\nonumber\\
-2n^2\sum_{i=1}^k\left(n^2-2a_i^2\right)^2a_i^2\cos^2\alpha_i&\le 0\label{eq:mid5}
\end{align}
Using \eqref{eq:mid4+}, \eqref{eq:mid4} and \eqref{eq:mid5},
we obtain the following estimate on $\Trace (J_n+B)^8$ using the expansion in \eqref{eq:mid3}.
\begin{equation}
\Trace (J_n+B)^8 \le \Trace J_n^8+\Trace B^8+2\Trace J_n^6B^2\label{eq:mid6}
\end{equation}
Since it holds that $\Trace J_n^6B^2=-n^5||Bj||^2\le 0$,
the inequality from the statement of the lemma now follows.
Hence, $\Trace (J_n+B)^8\le \Trace J_n^8+\Trace B^8$.
Moreover, since the inequalities \eqref{eq:mid4+}, \eqref{eq:mid4} and \eqref{eq:mid5}
hold with equality if $\cos\alpha_i=0$ for every $i\in [k]$, which holds if $j$ is in the kernel of $B$,
we can conclude that
$\Trace (J_n+B)^8=\Trace J_n^8+\Trace B^8$ if and only if the sum of each row of $B$ is zero.
\end{proof}

The following two lemmas will be important
to establish an upper bound on the trace of the last term in the upper bound given in Lemma~\ref{lm:midterms}.
To state the lemmas, we need the following definition.
For a square matrix $A$ of order $n$ we define the \emph{cyclic index} of $A$ as
\[\Cycl A=\sum_{\pi\in S_n}\prod_{i=1}^nA_{\pi(i)\pi(i+1)},\]
where the computation with indices is modulo $n$, i.e., $\pi(n+1)=\pi(1)$.

\begin{lemma}
\label{lm:trace4}
Let $B$ be a skew-symmetric matrix of order $4$ such that
each off-diagonal entry of $B$ is $+1$ or $-1$.
The cyclic index of $B$ is at most the cyclic index of $D_4$ and
equality holds if and only if $B$ is sign-equivalent to $D_4$.
\end{lemma}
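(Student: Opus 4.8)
The plan is to reduce the problem to a finite check over the sign-equivalence classes of skew-symmetric $4\times 4$ matrices with off-diagonal entries in $\{+1,-1\}$, and to evaluate the cyclic index on each class. First I would observe that the cyclic index $\Cycl B$ is invariant under the two operations defining sign-equivalence: simultaneously permuting rows and columns by the same permutation merely reindexes the sum over $S_4$, and multiplying the $i$-th row and $i$-th column by $-1$ multiplies each term $\prod_{j}B_{\pi(j)\pi(j+1)}$ by a factor of $(-1)^2=1$, since the index $i$, when it appears in the cyclic word $\pi(1)\pi(2)\cdots\pi(n)\pi(1)$, occurs exactly once and hence contributes exactly two of the factors $B_{\pi(j)\pi(j+1)}$ (one entering $i$, one leaving $i$). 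Thus $\Cycl B$ is a well-defined function on sign-equivalence classes, and it suffices to enumerate these classes.

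Next I would enumerate the sign-equivalence classes. A skew-symmetric $4\times 4$ matrix with $\pm 1$ off-diagonal is determined by the $\binom{4}{2}=6$ signs $B_{ij}$ for $i<j$, so there are $2^6=64$ such matrices. The sign-flip operations act on these $6$ bits, and one checks (this is the one genuinely bookkeeping step) that the orbits correspond naturally to the isomorphism classes of the tournament on $4$ vertices obtained by orienting edge $ij$ according to $\mathrm{sgn}(B_{ij})$ — more precisely, since a diagonal sign flip at $i$ reverses all edges at $i$, the classes are the tournaments on $4$ vertices up to isomorphism and up to the "reverse all edges at a vertex" move; in fact it is cleaner to note that $D_4$ itself is the transitive tournament, and any tournament can be reduced toward a canonical form. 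A short case analysis shows there are only a handful of classes (on the order of four), one of which is the class of $D_4$.

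Then I would compute $\Cycl B$ for a representative of each class. Since $|S_4|=24$ and the Hamiltonian cycles on $4$ labelled vertices come in $(4-1)!/2 = 3$ undirected types, each traversed in two directions, the sum $\Cycl B = \sum_{\pi\in S_4}\prod_i B_{\pi(i)\pi(i+1)}$ collects, for each of the $3$ undirected $4$-cycles $\{e_1,e_2,e_3,e_4\}$ on the vertex set, the contribution $8$ times the product $B_{e_1}B_{e_2}B_{e_3}B_{e_4}$ (the product of the four $\pm1$ edge-values is independent of orientation since each edge contributes $B_{ij}$ or $B_{ji}=-B_{ij}$ in matched pairs — here one must be slightly careful, but for a $4$-cycle the two directions give the same signed product). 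So $\Cycl B = 8\sum_{C}\varepsilon(C)$ where the sum is over the three $4$-cycles $C$ and $\varepsilon(C)\in\{+1,-1\}$ is the product of the edge signs around $C$. The maximum possible value is therefore $8\cdot 3 = 24$, attained exactly when all three $4$-cycles have $\varepsilon(C)=+1$; a direct check shows $D_4$ achieves this, and I would verify that $\varepsilon(C)=+1$ for all three $4$-cycles forces $B$ to be sign-equivalent to $D_4$ (the condition says the sign assignment has no "frustrated" $4$-cycle, which by a standard argument means the signing is a coboundary, i.e., obtainable from the all-$+1$ pattern by diagonal flips).

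The main obstacle is the final uniqueness claim: showing that $\varepsilon(C) = +1$ for every $4$-cycle $C$ forces sign-equivalence to $D_4$. I expect to handle this by the coboundary/switching argument just sketched — fixing a spanning structure (say a star centered at vertex $1$) and using diagonal flips to make all edges at vertex $1$ equal to $+1$ (i.e. $B_{1j}=+1$ for $j=2,3,4$), after which the three $4$-cycle conditions pin down the remaining three signs $B_{23},B_{24},B_{34}$ uniquely to match $D_4$ up to relabeling. All other steps are routine finite computation.
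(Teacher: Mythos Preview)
Your reduction to $\Cycl B = 8\sum_{C}\varepsilon(C)$, with the sum over the three undirected Hamiltonian $4$-cycles and $\varepsilon(C)$ the product of the entries $B_{\pi(i)\pi(i+1)}$ around a directed traversal of $C$, is correct. The error comes immediately after. You assert that the maximum is $8\cdot 3=24$ and that ``a direct check shows $D_4$ achieves this''. It does not: for $D_4$ one finds $\varepsilon(1\,2\,3\,4)=-1$ while the other two cycles give $+1$, so $\Cycl D_4=8$, in agreement with the paper. In fact $\sum_C\varepsilon(C)=3$ is \emph{impossible}: writing $s_{ij}=B_{ij}$ for $i<j$, each $\varepsilon(C)$ equals $\pm\prod_{\{i,j\}\in C}s_{ij}$, the extra sign coming from the edges traversed ``backwards'', and a short computation (or the observation that every edge of $K_4$ lies in exactly two of the three $4$-cycles) gives the identity $\varepsilon(C_1)\varepsilon(C_2)\varepsilon(C_3)=-1$. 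Hence $\sum_C\varepsilon(C)\in\{1,-3\}$ and the maximum of $\Cycl B$ is $8$, attained precisely when exactly one cycle has $\varepsilon(C)=-1$.

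Consequently your uniqueness argument, which rests on the ``no frustrated $4$-cycle'' coboundary picture, is built on a false premise: the skew-symmetry forces a frustrated cycle, so the extremal configuration is not the unfrustrated one. The approach is salvageable---once you have the constraint $\prod_C\varepsilon(C)=-1$, you only need to show that the configurations with exactly one $\varepsilon(C)=-1$ form a single sign-equivalence class (which they do: normalise so that $B_{1j}=+1$ for $j=2,3,4$, and the remaining three signs are determined up to a permutation of $\{2,3,4\}$). The paper bypasses all of this by simply normalising the first row to $+1$ and observing that, up to permutation, only two matrices remain, with cyclic indices $8$ and $-24$.
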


\begin{proof}
Since the cyclic index of sign-equivalent matrices is the same,
we may assume without loss of generality that the first row of $B$ contains $+1$ only.
Hence, we need to consider the following two matrices (after a permutation of rows and columns):
\[
\begin{pmatrix}
  0 & +1 & +1 & +1 \\
 -1 &  0 & +1 & +1 \\
 -1 & -1 &  0 & +1 \\
 -1 & -1 & -1 &  0 \\
\end{pmatrix} 
\quad\mbox{and}\quad
\begin{pmatrix}
  0 & +1 & +1 & +1 \\
 -1 &  0 & +1 & -1 \\
 -1 & -1 &  0 & +1 \\
 -1 & +1 & -1 &  0 \\
\end{pmatrix} 
\]
The cyclic index of the left matrix is $8$ and
the cyclic index of the right matrix is $-24$.
The statement of the lemma follows.
\end{proof}

To state the next lemma, we need to introduce another skew-symmetric matrix.
\[D'_8=
  \begin{pmatrix}
   0 & +1 & +1 & +1 &  +1 & +1 & +1 & +1 \\
  -1 &  0 & +1 & -1 &  +1 & +1 & +1 & +1 \\
  -1 & -1 &  0 & -1 &  +1 & +1 & +1 & +1 \\
  -1 & +1 & +1 &  0 &  -1 & -1 & -1 & -1 \\
  -1 & -1 & -1 & +1 &   0 & +1 & +1 & -1 \\
  -1 & -1 & -1 & +1 &  -1 &  0 & +1 & +1 \\
  -1 & -1 & -1 & +1 &  -1 & -1 &  0 & +1 \\
  -1 & -1 & -1 & +1 &  +1 & -1 & -1 &  0
  \end{pmatrix}
\]

\begin{lemma}
\label{lm:trace8}
Let $B$ be a skew-symmetric matrix of order $8$ such that
each off-diagonal entry of $B$ is $+1$ or $-1$.
The cyclic index of $B$ is at most the cycle index of $D_8$ and
equality holds if and only if $B$ is sign-equivalent to $D_8$ or to $D'_8$.
\end{lemma}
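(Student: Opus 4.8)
The plan is to reduce the statement to a finite computation by a combination of sign-equivalence reductions and a careful case analysis, mirroring the strategy of Lemma~\ref{lm:trace4} but at a scale that forces us to be cleverer about pruning. First I would normalize: since the cyclic index is invariant under sign-equivalence, I may assume the first row of $B$ is all $+1$ (off the diagonal). This already cuts the search space by a factor of $2^{7}$ in the row scaling, and a further quotient by row/column permutations fixing the first index brings the number of essentially distinct matrices down to something manageable but still large ($2^{\binom{7}{2}}$ before symmetry, so on the order of a few hundred orbits after quotienting by $S_7$ acting on the remaining indices together with the sign flips of rows $2,\dots,8$). The goal of the normalization step is to make an exhaustive check feasible, either by hand with enough structural shortcuts or by a short computer search whose output is the two claimed extremal matrices $D_8$ and $D'_8$.

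The key analytic input that should make the case analysis tractable is a monotonicity/expansion identity for $\Cycl$ under flipping a single pair of symmetric entries. Writing $\Cycl B$ as a sum over cyclic permutations $\pi \in S_8$ of $\prod_i B_{\pi(i)\pi(i+1)}$, each term is $\pm 1$, so $\Cycl B$ is an integer of absolute value at most $7! = 5040$; in fact, since reversing a cyclic permutation gives the reversed product and $B$ is skew-symmetric of even cycle length, terms pair up and one sees $\Cycl B \equiv 0 \pmod 2$ and more. I would first compute $\Cycl D_8$ explicitly (this is the $n=8$ value of a sequence one can evaluate via the eigenstructure of $D_n$ reviewed before Lemma~\ref{lm:midterms}, or directly), call it $c^\ast$, and verify $\Cycl D'_8 = c^\ast$ as well. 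Then the content of the lemma is: no sign-equivalence class other than those of $D_8$ and $D'_8$ reaches $c^\ast$. For this I would exploit that $\Cycl$ restricted to the $4$-element principal submatrices is controlled by Lemma~\ref{lm:trace4}: a cyclic permutation of $[8]$ decomposes, and upper bounds on $\Cycl B[X]$ for $4$-subsets $X$ feed into a bound on $\Cycl B$ via a convexity/rearrangement argument analogous to how Lemma~\ref{lm:sumsq} was used inside Lemma~\ref{lm:antisym}. The matrix $D_8$ and the somewhat exotic $D'_8$ should be exactly the configurations that saturate all the relevant $4$-submatrix bounds simultaneously.

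The main obstacle I anticipate is the emergence of the \emph{second} extremal matrix $D'_8$: unlike the $n=4$ case, where the transitive tournament's matrix is the unique maximizer, here one must discover and then rule out the possibility of further sporadic maximizers. Structurally $D'_8$ looks like a "twisted" version of $D_8$ — rows $2,\dots,4$ and $5,\dots,8$ each carry a $D$-like pattern but the cross-block and the $(2,4)$ entry are altered — so the argument cannot rely purely on a global linear-algebraic inequality; it needs a combinatorial fingerprint (e.g. the multiset of values $\{\Cycl B[X] : |X| = 4\}$, or the number of $+1$ entries above the diagonal, which for a maximizer should be forced into a narrow range) that isolates precisely these two classes. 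Concretely, the proof skeleton would be: (i) normalize the first row; (ii) show any maximizer has a prescribed number of positive entries and prescribed row sums, using Lemma~\ref{lm:midterms}-style arguments or direct flip inequalities; (iii) among the finitely many remaining candidates, compute $\Cycl$ — either by hand using the block decomposition and Lemma~\ref{lm:trace4}, or by citing a verified enumeration — and observe that $c^\ast$ is attained only by $D_8$ and $D'_8$. The honest statement is that step (iii) is where a modest exhaustive check is unavoidable, just as two explicit $4\times 4$ matrices sufficed in Lemma~\ref{lm:trace4}; here it will be a handful more matrices, organized by the sign pattern of the off-diagonal entries in rows $2$ through $8$.
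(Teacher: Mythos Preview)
Your endpoint is exactly what the paper does: normalize so the first row is all $+1$ off the diagonal and then exhaustively enumerate the $2^{\binom{7}{2}}=2^{21}$ remaining $\pm 1$ skew-symmetric matrices, checking that the cyclic index never exceeds $\Cycl D_8=2176$ and that equality occurs precisely for matrices sign-equivalent to $D_8$ or $D'_8$. The paper makes no attempt at structural pruning; it simply runs two independent programs (one in C, one in C++) over the full search space and reports the result, with the code provided as an ancillary file.

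Where your proposal diverges is in the intermediate steps~(ii) and the ``by hand'' branch of~(iii), and these are the parts that do not hold up. There is no decomposition of a cyclic permutation of $[8]$ into cyclic permutations of $4$-subsets, so Lemma~\ref{lm:trace4} does not feed into a bound on $\Cycl B$ for the $8\times 8$ matrix in any direct way; the analogy with how Lemma~\ref{lm:sumsq} enters Lemma~\ref{lm:antisym} is not apt, because there the quantity being bounded was a sum of squares over an orthogonal decomposition, not a sum over cyclic words. Likewise the ``flip inequalities'' and forced row-sum constraints you allude to are not established and are not obviously true for a \emph{cyclic} index (as opposed to a trace of a power). Your orbit-count estimate of ``a few hundred'' is also off by an order of magnitude --- the number of isomorphism classes of tournaments on $8$ vertices is already $6880$, and sign-equivalence classes of $\pm 1$ skew-symmetric matrices are comparable --- so a genuinely by-hand check is not realistic. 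In short, the honest proof is your fallback: a computer verification, which is precisely what the paper records.
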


The proof of Lemma~\ref{lm:trace8} proceeds by a computer assisted inspection of all skew-symmetric $8 \times 8$ matrices
where the off-diagonal entries in the first row are $+1$, those in the first column are $-1$, and 
all other off-diagonal entries are either $+1$ or $-1$.
In an independent way, we have prepared a C program and a C++ program to verify Lemma~\ref{lm:trace8},
i.e., to check that the cyclic index of every skew-symmetric $8 \times 8$ matrix of the above form is at most $2\,176$ and
the equality holds if and only if the matrix is sign-equivalent to $D_8$ or to $D'_8$;
the code of the C program and its output are available as ancillary files on arXiv.

We are now ready to compute the value of $c(8)$.

\begin{theorem}
\label{thm:c8}
It holds that $c(4)=4/3$ and $c(8)=332/315$.
\end{theorem}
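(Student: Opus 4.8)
The plan is to reduce the problem to bounding $\Trace(J_n+B)^\ell$ for skew-symmetric $B\in[-1,1]^{n\times n}$, to split this trace off using Lemma~\ref{lm:midterms}, and then to control the resulting term $\Trace B^\ell$ via the cyclic index, invoking Lemma~\ref{lm:trace4} for $\ell=4$ and Lemma~\ref{lm:trace8} for $\ell=8$; the matching lower bounds come from the carousel tournamenton.

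First I would translate everything to matrices. Given a tournamenton $W$ and a convergent sequence $(A_n)_{n\in\NN}$ of its step approximations with $A_n$ of order $k_n$, set $B_n=2k_nA_n-J_{k_n}$, which is skew-symmetric with entries in $[-1,1]$ and satisfies $k_nA_n=\tfrac12(J_{k_n}+B_n)$. By Proposition~\ref{prop:cut},
\[
C(W,\ell)=2^\ell\lim_{n\to\infty}\Trace A_n^\ell=\lim_{n\to\infty}\frac{1}{k_n^\ell}\Trace(J_{k_n}+B_n)^\ell,
\]
so to bound $c(\ell)$ from above it suffices to bound $\tfrac1{n^\ell}\Trace(J_n+B)^\ell$ uniformly over all $n$ and all skew-symmetric $B\in[-1,1]^{n\times n}$. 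By Lemma~\ref{lm:midterms}, for $\ell\in\{4,8\}$ we have $\Trace(J_n+B)^\ell\le n^\ell+\Trace B^\ell$, so the task is reduced to bounding $\Trace B^\ell$.

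Next I would expand $\Trace B^\ell$ as a sum over closed walks of length $\ell$ and split according to the number of distinct vertices visited. Walks visiting at most $\ell-1$ distinct vertices number $O(n^{\ell-1})$ and each contributes a product of $\ell$ entries of absolute value at most one, so their total contribution is $O(n^{\ell-1})=o(n^\ell)$. The walks visiting all $\ell$ vertices of an $\ell$-set $X$ contribute exactly $\Cycl B[X]$, hence
\[
\Trace B^\ell=\sum_{X:\,|X|=\ell}\Cycl B[X]+O(n^{\ell-1}).
\]
For a fixed $\ell$-set $X$, after using skew-symmetry to eliminate the sub-diagonal entries, $\Cycl B[X]$ becomes a multilinear function of the $\binom{\ell}{2}$ free entries of $B[X]$ (each monomial in the definition of $\Cycl$ corresponds to a Hamilton cycle on $X$ and is therefore a product of $\ell$ distinct free entries up to sign), so its maximum over $B[X]\in[-1,1]^{\ell\times\ell}$ is attained at a skew-symmetric $\pm1$ matrix. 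Lemma~\ref{lm:trace4} then gives $\Cycl B[X]\le\Cycl D_4=8$ and Lemma~\ref{lm:trace8} gives $\Cycl B[X]\le\Cycl D_8=2\,176$. Plugging these in,
\[
\Trace B^4\le 8\binom{n}{4}+O(n^3)=\frac{n^4}{3}+O(n^3),\qquad \Trace B^8\le 2\,176\binom{n}{8}+O(n^7)=\frac{17}{315}\,n^8+O(n^7),
\]
which yields $c(4)\le 1+\tfrac13=\tfrac43$ and $c(8)\le 1+\tfrac{17}{315}=\tfrac{332}{315}$.

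Finally I would match these with the lower bounds given by the carousel tournamenton $W_C$ of Section~\ref{sec:prelim}, for which $\wsigma(W_C)$ consists of $1/2$ together with $\pm\unit/((2i-1)\pi)$, $i\in\NN$. By~\eqref{eq:lambda},
\[
C(W_C,\ell)=1+2\sum_{i=1}^{\infty}\left(\frac{2}{(2i-1)\pi}\right)^{\ell},
\]
and evaluating this with $\sum_{i\ge1}(2i-1)^{-4}=\pi^4/96$ and $\sum_{i\ge1}(2i-1)^{-8}=\tfrac{255}{256}\zeta(8)=\tfrac{255\pi^8}{256\cdot9450}$ gives $C(W_C,4)=4/3$ and $C(W_C,8)=332/315$, so $c(4)\ge 4/3$ and $c(8)\ge 332/315$, completing the proof. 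The main obstacle is exactly the combinatorial statement already isolated as Lemma~\ref{lm:trace8}, namely determining the maximum of $\Cycl$ over all skew-symmetric $\pm1$ matrices of order $8$, which is why that lemma is verified by computer; the multilinearity reduction and the bookkeeping of the lower-order walk terms are routine.
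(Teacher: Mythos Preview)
Your proof is correct and follows essentially the same route as the paper: split off $\Trace B^\ell$ via Lemma~\ref{lm:midterms}, expand $\Trace B^\ell$ as a sum of cyclic indices over $\ell$-subsets plus a lower-order walk term, bound each cyclic index by Lemmas~\ref{lm:trace4} and~\ref{lm:trace8}, and match against the carousel tournamenton. The only cosmetic differences are that you work with step approximations (hence need the multilinearity reduction to $\pm1$ entries, which the paper avoids by working with actual tournaments) and that you evaluate $\Cycl D_\ell\binom{n}{\ell}$ directly, whereas the paper passes through $\Trace D_n^\ell$ and its spectrum; both routes land on the same constants $1/3$ and $17/315$.
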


\begin{proof}
Fix $\ell\in\{4,8\}$ and
let $A$ be the tournament matrix of an $n$-vertex tournament $T$.
Proposition~\ref{prop:eigen} yields that
\[C(T,\ell)=\frac{2^\ell}{n^\ell}\Trace A^\ell+O(n^{-1}).\]
Let $B=J_n-2A$ and note that $A=\frac{J_n+B}{2}$.
By Lemma~\ref{lm:midterms}, we obtain that
\[\Trace A^\ell\le\frac{1}{2^\ell}\left(\Trace J_n^\ell+\Trace B^\ell\right).\]
The trace of $B^\ell$ can be combinatorially interpreted as the sum taken over all closed walks with length $\ell$ in $T$
where the sum contains $+1$ for every such walk with an even number of forward edges and
$-1$ for every such walk with an odd number of forward edges.
Such walks that are not cycles contribute to the sum only $O(n^{\ell-1})$ and
those that are cycles can be counted as cyclic indices of the square submatrices
with rows and columns indexed by the vertices of the cycle.
Hence, we obtain the following.
\[\Trace B^\ell=\sum_{X\in\binom{[n]}{\ell}}\Cycl B[X]+O(n^{\ell-1}).\]
By Lemmas~\ref{lm:trace4} and~\ref{lm:trace8},
it holds $\Cycl B[X]\le\Cycl D_\ell$ for every $X\in\binom{[n]}{\ell}$,
which yields that
\[\Trace B^\ell\le\Trace D_n^\ell+O(n^{\ell-1}).\]
Hence, we obtain that 
\[C(T,\ell)\le\frac{1}{n^\ell}\left(n^\ell+\Trace D_n^\ell+O(n^{\ell-1})\right).\]

We next proceed separately for $\ell=4$ and $\ell=8$.
Analyzing the spectrum of the matrix $D_n$ yields that
\[\lim_{n\to\infty}\frac{\Trace D_n^4}{n^4}=2\sum_{i=1}^{\infty}\left(\frac{2}{(2i-1)\pi}\right)^4=\frac{1}{3},\]
which implies that
\[C(T,4)\le\frac{4}{3}+O(n^{-1}).\]
Similarly, we obtain that
\[\lim_{n\to\infty}\frac{\Trace D_n^8}{n^8}=2\sum_{i=1}^{\infty}\left(\frac{2}{(2i-1)\pi}\right)^8=\frac{17}{315},\]
which implies that
\[C(T,8)\le\frac{332}{315}+O(n^{-1}).\]
Since it holds that $C(W_C,4)=4/3$ and $C(W_C,8)=332/315$ for the carousel tournamenton $W_C$,
the statement of the theorem now follows.
\end{proof}

The methods used to prove Theorem~\ref{thm:c8} actually provide the characterization of extremal tournamentons.
We fix some additional notation:
$T^4$ is the $4$-vertex transitive tournament,
$C^4$ is the unique $4$-vertex hamiltonian tournament,
$L^4$ is the unique $4$-vertex non-transitive tournament with a sink, and
$W^4$ is the unique $4$-vertex non-transitive tournament with a source.
The four tournaments are depicted in Figure~\ref{fig:CLTW}.
It is also interesting to note that the matrix $D'_8$ is sign-equivalent to the following matrix $D''_8$:
\[D''_8=
  \begin{pmatrix}
   0 & +1 & +1 & -1 &  +1 & +1 & +1 & +1 \\
  -1 &  0 & +1 & +1 &  +1 & +1 & +1 & +1 \\
  -1 & -1 &  0 & +1 &  +1 & +1 & +1 & +1 \\
  +1 & -1 & -1 &  0 &  +1 & +1 & +1 & +1 \\
  -1 & -1 & -1 & -1 &   0 & +1 & +1 & -1 \\
  -1 & -1 & -1 & -1 &  -1 &  0 & +1 & +1 \\
  -1 & -1 & -1 & -1 &  -1 & -1 &  0 & +1 \\
  -1 & -1 & -1 & -1 &  +1 & -1 & -1 &  0
  \end{pmatrix}
  .
\]
The $8$-vertex tournament with the tournament matrix $\frac{J_8+D''_8}{2}$
is the tournament obtained from two copies of $C^4$ by adding edges directed from the first copy to the second;
this tournament is depicted in Figure~\ref{fig:C4C4}.

\begin{figure}
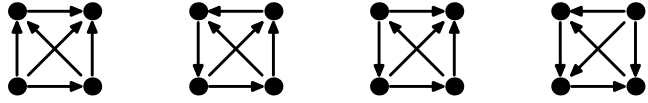

\begin{center}
\epsfbox{tourn-ck-4.mps}
\hskip 1cm
\epsfbox{tourn-ck-2.mps}
\hskip 1cm
\epsfbox{tourn-ck-5.mps}
\hskip 1cm
\epsfbox{tourn-ck-3.mps}
\end{center}
\caption{The tournaments $T^4$, $C^4$, $L^4$ and $W^4$.}
\label{fig:CLTW}
\end{figure}

\begin{figure}
\begin{center}
\epsfbox{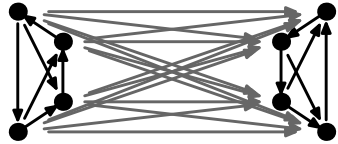}
\end{center}
\caption{The $8$-vertex tournament with the tournament matrix $\frac{J_8+D''_8}{2}$.
         The edges between the two copies of $C^4$ are drawn in gray to better display the structure of the tournament.}
\label{fig:C4C4}
\end{figure}

\begin{theorem}
\label{thm:c48}
Let $W$ be a tournamenton.
It holds that $C(W,4)=c(4)=4/3$ if and only if $W$ is weakly isomorphic to the carousel tournamenton $W_C$, and
it holds that $C(W,8)=c(8)=332/315$ if and only if $W$ is weakly isomorphic to the carousel tournamenton $W_C$.
\end{theorem}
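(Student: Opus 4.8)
The plan is to upgrade the chain of inequalities proving Theorem~\ref{thm:c8} into a description of all tournamentons attaining equality, and then to identify those tournamentons. Throughout let $\ell\in\{4,8\}$. One direction is immediate: if $W$ is weakly isomorphic to $W_C$ then $d(T,W)=d(T,W_C)$ for every tournament $T$, hence $C(W,\ell)=C(W_C,\ell)=c(\ell)$ by Theorem~\ref{thm:c8}. For the converse, assume $C(W,\ell)=c(\ell)$ and apply the argument of Theorem~\ref{thm:c8} to a sequence of $W$-random tournaments $T_n$, for which $C(T_n,\ell)\to C(W,\ell)=c(\ell)$. Since the final bound $C(T_n,\ell)\le 1+\Trace D_n^\ell/n^\ell+O(n^{-1})$ is asymptotically tight, every intermediate inequality is too. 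Tightness in Lemma~\ref{lm:midterms} forces the row sums of $B_n=J_n-2A_n$ to vanish in the limit, so $W$ is regular (equivalently $1/2\in\wsigma(W)$ by Proposition~\ref{prop:reg}); and tightness of $\sum_{X}\Cycl B_n[X]\le\binom{n}{\ell}\Cycl D_\ell$ together with $\Cycl B_n[X]\le\Cycl D_\ell$ (Lemmas~\ref{lm:trace4} and~\ref{lm:trace8}) forces all but a $o(1)$-fraction of the $\ell$-element sets $X$ to satisfy $\Cycl B_n[X]=\Cycl D_\ell$. Passing to the limit gives $\sum_{T\in\mathcal{E}_\ell}d(T,W)=1$, where $\mathcal{E}_\ell$ is the finite set of $\ell$-vertex tournaments whose sign matrix has cyclic index $\Cycl D_\ell$.

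For $\ell=4$, Lemma~\ref{lm:trace4} (and the computation in its proof) gives $\mathcal{E}_4=\{T^4,C^4\}$, since the sign matrices of $L^4$ and $W^4$ have cyclic index $-24<8=\Cycl D_4$; hence $d(L^4,W)=d(W^4,W)=0$. I would first show that $W$ is $\{0,1\}$-valued almost everywhere: if $W$ took values strictly between $0$ and $1$ on a positive-measure set, one could find a positive-measure block of coordinates on which a random $4$-tuple induces $L^4$ (or $W^4$) with positive probability, contradicting $d(L^4,W)=0$. Writing $W$ as the indicator of a measurable ``tournament relation'' on $[0,1]$, the vanishing of $d(L^4,W)$ says that the in-neighbourhood of almost every point induces a transitive suborder, and $d(W^4,W)=0$ says the same for out-neighbourhoods; such a tournamenton is \emph{locally transitive}. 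The heart of the argument is then a structure theorem: a regular locally transitive tournamenton equals $W_C$ after a measure-preserving change of coordinates — transitivity of the neighbourhoods pins down a coherent cyclic ordering of $[0,1]$ under which out-neighbourhoods become arcs, and regularity forces every such arc to have measure exactly $1/2$. Thus $W$ is weakly isomorphic to $W_C$.

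For $\ell=8$, Lemma~\ref{lm:trace8} (with its computer verification) identifies $\mathcal{E}_8$ as the union of the switching classes of the transitive tournament with matrix $\frac{J_8+D_8}{2}$ and of the $8$-vertex tournament obtained from two copies of $C^4$ by adding all edges from the first copy to the second, i.e.\ the tournament of Figure~\ref{fig:C4C4} with matrix $\frac{J_8+D''_8}{2}$. Since switching commutes with restriction to subsets, every $4$-subtournament of a member of the first switching class lies in $\{T^4,C^4\}=\mathcal{E}_4$, whereas every member of the second class carries a copy of $L^4$ or $W^4$ on some $4$-subset. I would then argue that the weaker condition $\sum_{T\in\mathcal{E}_8}d(T,W)=1$, combined with regularity, still forces $d(L^4,W)=d(W^4,W)=0$: a positive density of copies of $L^4$ would push almost all of their $8$-vertex extensions into the switching class of the ``two copies of $C^4$'' tournament, which carries a near-bipartite structure irreconcilable with the out-degrees being concentrated at $1/2$ (and symmetrically for $W^4$). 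Once $d(L^4,W)=d(W^4,W)=0$, the problem is again that of a regular locally transitive tournamenton, so $W$ is weakly isomorphic to $W_C$. (That $W_C$ itself satisfies $\sum_{T\in\mathcal{E}_8}d(T,W_C)=1$ follows from $C(W_C,8)=c(8)$, established in Theorem~\ref{thm:c8}.)

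I expect the main obstacle to be the structure theorem for regular locally transitive tournamentons — reconstructing a global circular order from the purely local transitivity of neighbourhoods, and then pinning the arc-lengths using regularity — together with the regularity-versus-near-bipartiteness argument that reduces the $\ell=8$ case to it. Extracting the equality conditions from the proof of Theorem~\ref{thm:c8} and the small-case bookkeeping (identifying $\mathcal{E}_4$, $\mathcal{E}_8$ and the relevant switching classes) should be routine by comparison.
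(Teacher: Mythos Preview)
Your overall strategy---extract the equality cases from the chain of inequalities in Theorem~\ref{thm:c8} and then identify the structure---matches the paper's. The derivation of regularity and of $\sum_{T\in\mathcal{E}_\ell}d(T,W)=1$ is correct, and your identification of $\mathcal{E}_4=\{T^4,C^4\}$ and of $\mathcal{E}_8$ as the switching classes of $D_8$ and $D'_8$ (equivalently $D''_8$) is right.

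For $\ell=4$ your approach and the paper's coincide in spirit but differ in where the structure theorem lives. You propose to prove directly, at the tournamenton level, that a regular locally transitive tournamenton is weakly isomorphic to $W_C$. The paper instead stays at the finite level: since $d(L^4,W)=d(W^4,W)=0$, with probability one the $W$-random tournament $T_n$ contains no copy of $L^4$ or $W^4$, so every in- and out-neighbourhood of $T_n$ is transitive; together with near-regularity this forces $T_n$ to be (close to) a carousel tournament, and hence $W$ is a limit of carousels. This sidesteps your proposed tournamenton-level structure theorem by using the finite characterisation of locally transitive tournaments. Either route is viable; the paper's is shorter.

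For $\ell=8$ there is a genuine gap. Your plan for ruling out the $D'_8$ class---arguing that a positive density of $L^4$ forces a ``near-bipartite structure irreconcilable with regularity''---is not justified and is unlikely to work as stated: sign-equivalence (switching) destroys the visible one-way bipartition of the $D''_8$ tournament, so a positive density of $8$-sets in that switching class does not produce any global bipartite obstruction to regularity, and a regular tournamenton can certainly carry positive density of highly irregular subtournaments. The paper avoids this entirely by lifting to nine vertices: a short case analysis shows that every skew-symmetric $\pm 1$ matrix of order~$9$ whose $8\times 8$ principal submatrices are all sign-equivalent to $D_8$ or $D'_8$ is itself sign-equivalent to $D_9$. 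Since almost every $9$-set in $W$ must then have sign matrix sign-equivalent to $D_9$, and every $4\times 4$ principal submatrix of $D_9$ is sign-equivalent to $D_4$, one gets $d(L^4,W)=d(W^4,W)=0$ for free and reduces to the $\ell=4$ argument. Replacing your near-bipartiteness heuristic by this $9$-vertex case analysis would complete your proof.
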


\begin{proof}
If $W$ is weakly isomorphic to the the carousel tournamenton $W_C$, then $C(W,4)=C(W_C,4)=c(4)$ and $C(W,8)=C(W_C,8)=c(8)$.
Hence, we focus on proving the converse implications and start with the one concerning cycles of length four.
Let $W$ be a tournamenton such that $C(W,4)=c(4)$.
Let $T_n$, $n\in\NN$, be an $n$-vertex $W$-random tournament, and
let $B_n$ be the skew-symmetric matrix such that $\frac{J_n+B_n}{2}$ is the tournament matrix of $T_n$.
Note that the limit of $C(T_n,4)$ is $C(W,4)$ with probability one.

Similarly to the proof of Theorem~\ref{thm:c8}, we obtain using Lemma~\ref{lm:midterms} that
\[C(T_n,4)\le 1+\frac{1}{n^4}\Trace D_n^4-\frac{4}{n^3}||B_nj||^2+O(n^{-1}),\]
where $j$ is the vector with all entries equal to one.
It follows that
\[C(W,4)=\lim_{n\to\infty} C(T_n,4)\le 1+\lim_{n\to\infty}\frac{1}{n^4}\Trace D_n^4=c(4).\]
Furthermore, equality can hold only if
\begin{equation}
\lim_{n\to\infty}\frac{\|B_nj\|^2}{n^3}=0\label{eq:Bnj}
\end{equation}
and the proportion of principal submatrices of $B_n$ of order four that are sign-equivalent to $D_4$ tends to $1$.
The latter implies that $d(T^4,W)+d(C^4,W)=1$ and $d(L^4,W)+d(W^4,W)=0$,
i.e., the only $4$-vertex tournaments with positive density in $W$ are $T^4$ and $C^4$.
We conclude that all $4$-vertex subtournaments of $T_n$ are $T^4$ and $C^4$ (with probability one),
in particular, the in-neighborhood and the out-neighborhood of every vertex of $T_n$ is transitive.
If \eqref{eq:Bnj} holds, then $1/2\in\wsigma(W)$ and the tournamenton $W$ is regular by Proposition~\ref{prop:reg}.
Hence, the in-degree of every vertex of $T_n$ is close to $n/2$ with high probability,
which implies that $W$ is a limit of the carousel tournaments described at the end of Section~\ref{sec:prelim}.
Since the tournamenton $W_C$ is also a limit of the carousel tournaments,
the tournamentons $W$ and $W_C$ are weakly isomorphic.

We now deal with the case of cycles of length eight.
Let $W$ be a tournamenton such that $C(W,8)=c(8)$.
As in the previous case, we conclude that $W$ is regular and
the only $8$-vertex tournaments with positive density in $W$ are those
whose tournament matrix $A$ satisfies that $2A-J_n$ is sign-equivalent to $D_8$ and $D'_8$.
A~straightforward case analysis yields that every skew-symmetric matrix $B$ of order nine such that
every principal submatrix of order eight of $B$ is sign-equivalent to $D_8$ or $D'_8$
satisfies that $B$ is sign-equivalent to $D_9$.
It follows that the only $9$-vertex tournaments with positive density in $W$ are those
whose tournament matrix $A$ satisfies that $2A-J_n$ is sign-equivalent to $D_9$.
Consequently, the only $4$-vertex tournaments with positive density in $W$ are those
whose tournament matrix $A$ satisfies that $2A-J_n$ is sign-equivalent to $D_4$,
i.e., the tournaments $T^4$ and $C^4$.
Analogously to the previous case, we now conclude that the tournamentons $W$ and $W_C$ are weakly isomorphic.
\end{proof}

\bibliographystyle{bibstyle}
\bibliography{tourn-ck}

\end{document}